\documentclass[12pt]{amsart}

\usepackage{fullpage}
\usepackage{latexsym}
\usepackage{amssymb}
\usepackage{pinlabel}
\usepackage[new]{old-arrows}
\usepackage[all]{xy}

\usepackage[utf8]{inputenc}

\pagestyle{headings}
\setlength{\headheight}{6.2pt}
\setlength{\headsep}{0.5cm}

\newtheorem{thm}{Theorem}[section]
\newtheorem{prop}[thm]{Proposition}
\newtheorem{cor}[thm]{Corollary}
\newtheorem{con}[thm]{Conjecture}
\newtheorem{lem}[thm]{Lemma}
\newtheorem{que}[thm]{Question}

\theoremstyle{definition}
\newtheorem{rem}[thm]{Remark}
\newtheorem{defn}[thm]{Definition}
\newtheorem{ex}[thm]{Example}

\usepackage{hyperref}

\newcommand{\N}{\mathbb{N}}

\newcommand{\R}{\mathbb{R}}
\newcommand{\Z}{\mathbb{Z}}

\newcommand\rank{\operatorname{rank}}

\title[Mapping tori, Hopf-type properties, rigidity]
{Endomorphisms of mapping tori}
\author{Christoforos Neofytidis}
\address{Department of Mathematics, Ohio State University, Columbus, OH 43210, USA}
\email{neofytidis.1@osu.edu}
\date{\today}

\begin{document}

\maketitle

\begin{abstract}
We classify in terms of Hopf-type properties mapping tori of residually finite Poincar\'e Duality groups with non-zero Euler characteristic.
This generalises and gives a new proof of the analogous classification for fibered 3-manifolds. 
Various applications are given. In particular, we deduce that  rigidity results for 
Gromov hyperbolic groups hold for the above mapping tori with trivial center.
\end{abstract}

\section{Introduction}

We classify in terms of Hopf-type properties mapping tori of 
residually finite Poincar\'e Duality ($PD$) groups $K$ with non-zero Euler characteristic, which satisfy the following finiteness condition:
\begin{equation}\tag{$\ast$}\label{finiteness}
\begin{aligned}[c]
\text{\em Let} \ \theta\in\mathrm{Out}(K). \ \text{\em If} \  \theta^d=\xi\theta\xi^{-1} \ \text{\em for some} \ d>1 \ \text{\em and} \ \xi\in\mathrm{Out}(K), \ \text{\em then} \ \theta \ \text{\em is torsion}.\\
  \end{aligned}
\end{equation}
One sample application is that every endomorphism onto a finite index subgroup of the fundamental group of a mapping torus $F\rtimes_h S^1$, where $F$  is a closed aspherical manifold with $\pi_1(F)=K$ as above, induces a homotopy equivalence (equivalently, $\pi_1(F\rtimes_h S^1)$ is {\em cofinitely Hopfian}) if and only if 
the center of $\pi_1(F\rtimes_h S^1)$ is trivial; equivalently, $\pi_1(F\rtimes_h S^1)$ is {\em co-Hopfian}. If, in addition, $F$ is topologically rigid, then homotopy equivalence can be replaced by a map homotopic to a homeomorphism, by Waldhausen's rigidity~\cite{Wald} in dimension three and Bartels-L\"uck's rigidity~\cite{BL} in dimensions greater than four. Condition (\ref{finiteness}) is known to be true for every aspherical surface by a theorem of Farb, Lubotzky and Minsky~\cite{FLM} on translation lengths. Hence, our result generalises and gives a new, uniform proof of the analogous classification in dimension three, due to Gromov~\cite{Gro} for 3-manifolds with positive simplicial volume, and to Wang~\cite{Wan} for non-trivial graph 3-manifolds. 
We will not depend on any notion of hyperbolicity for manifolds or groups, or on non-vanishing 
semi-norms. In fact, as another application, we will show that there exist semi-norms that do not vanish on every such mapping torus $F\rtimes_h S^1$ with trivial center. 
Our results hold if we replace the requirement on the normal subgroup being a $PD^n$-group with weaker assumptions on co-homology of degree $n$. Furthermore, our methods
will generalise and encompass the analogous classification for free-by-cyclic groups which was done by Bridson, Groves, Hillman and Martin~\cite{BGHM}. 
Various other applications are given, for instance, to the Hopf problem, the Lichnerowicz problem on quasiregular maps, the Gromov order and topological rigidity,  as well as an extension to non-aspherical manifolds. 

\subsection{Motivation and statements of the main results}

Gromov~\cite{Gro} established strong rigidity for hyperbolic 3-manifolds and, more generally, for 3-manifolds with hyperbolic JSJ pieces, using the simplicial volume. This, together with Wang's study of self-maps of non-trivial graph 3-manifolds~\cite{Wan} and Waldhausen's rigidity~\cite{Wald}, yields the following strong rigidity for  3-manifolds
 that fiber over the circle, according to the Nielsen-Thurston picture.

\begin{thm}[Gromov, Waldhausen, Wang]\label{t:surfacebundles}
Let $E_h$ be the mapping torus of a diffeomorphism $h$ of a hyperbolic surface $\Sigma$. Every self-map of $E_h$ of non-zero degree is homotopic to a homeomorphism if and only if $h$ is not periodic.
\end{thm}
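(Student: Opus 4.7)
The plan is to argue via the Nielsen--Thurston classification of $h$, whose isotopy class is either periodic, reducible of infinite order, or pseudo-Anosov. Correspondingly, by Thurston's hyperbolization theorem for surface bundles, $E_h$ is either Seifert fibered over a hyperbolic $2$-orbifold, a $3$-manifold with non-trivial JSJ decomposition (mixing Seifert and possibly hyperbolic pieces), or a closed hyperbolic $3$-manifold.

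For $(\Leftarrow)$, assume $h$ is not periodic. If $h$ is pseudo-Anosov, then $E_h$ is hyperbolic and has positive simplicial volume by Gromov's proportionality principle; the functorial inequality $|d|\cdot\|E_h\|\leq\|E_h\|$ then forces $|d|=1$ for any self-map of degree $d\neq 0$. Asphericity of $E_h$ combined with residual finiteness (hence the Hopf property) of $\pi_1(E_h)$ promotes such a degree $\pm 1$ map to a homotopy equivalence, which by Mostow rigidity is homotopic to an isometry. If instead $h$ is reducible of infinite order, the JSJ decomposition of $E_h$ is non-trivial; combining Gromov's simplicial volume bound on the hyperbolic JSJ pieces with Wang's theorem~\cite{Wan} on non-trivial graph manifolds shows that any non-zero degree self-map is a homotopy equivalence. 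Waldhausen's rigidity~\cite{Wald} for Haken $3$-manifolds then upgrades this to a map homotopic to a homeomorphism.

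For $(\Rightarrow)$, I argue by contrapositive: assume $h$ has finite order $n$ (via Nielsen realization). Writing $E_h=(\Sigma\times\mathbb{R})/\mathbb{Z}$ with the action $k\cdot(x,t)=(h^k(x),t+k)$, the map $f\colon[(x,t)]\mapsto[(x,nt)]$ is well-defined (since $h^n=\id$) and is an $n$-fold self-cover, hence of degree $n$. When $n\geq 2$ the image $f_*\pi_1(E_h)$ has index $n$, so $f$ is not a homotopy equivalence and in particular not homotopic to any homeomorphism. When $n=1$, $E_h\cong\Sigma\times S^1$, and fiber multiplication $(x,\theta)\mapsto(x,k\theta)$ with $k\geq 2$ supplies the required non-example.

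The main obstacle I expect is the reducible case: one must verify that a non-zero degree self-map preserves the JSJ decomposition of $E_h$ up to homotopy, matches hyperbolic pieces to hyperbolic pieces and Seifert pieces to Seifert pieces, and restricts to degree $\pm 1$ on each piece, before gluing the piecewise homotopies into a global homotopy equivalence. This matching-and-gluing step is the technical heart of the argument; Wang's work provides it for graph manifolds, and it must be combined with the Gromov simplicial volume bound on the hyperbolic pieces to cover manifolds of mixed JSJ type.
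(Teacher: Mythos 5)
Your overall strategy is the classical Gromov--Waldhausen--Wang argument, which is legitimate for this statement, but it contains a concrete error in the periodic direction: the map $f([x,t])=[(x,nt)]$ on $E_h=(\Sigma\times\R)/\Z$ is \emph{not well-defined} when $h$ has exact order $n\geq 2$. Descending $(x,t)\mapsto(x,kt)$ to the quotient requires $(h(x),kt+k)\sim(x,kt)$ for all $(x,t)$, and since the orbit of $(x,kt)$ is $\{(h^j(x),kt+j)\}_{j\in\Z}$, this forces $j=k$ and $h^{k-1}=\id$, i.e.\ $k\equiv 1\ (\mathrm{mod}\ n)$; the choice $k=n$ fails for every $n\geq2$. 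The fix is to take $k=n+1$: then $[(x,t)]\mapsto[(x,(n+1)t)]$ is a well-defined self-covering of degree $n+1>1$, hence not homotopic to a homeomorphism, which is exactly what the contrapositive needs. (This is the geometric shadow of the algebraic construction in the proof of Theorem \ref{t:Hopf-type}, where $\theta^m\in\mathrm{Inn}(K)$ yields the endomorphism $\alpha\mapsto\alpha$, $t\mapsto\beta t^{m+1}$ with image of index $m+1$, not $m$.) Your $n=1$ case is fine as written. Also, your closing worry about matching JSJ pieces is unnecessary: if $E_h$ has a hyperbolic JSJ piece, Gromov's additivity of the simplicial volume under gluing along incompressible tori already gives $\|E_h\|>0$ globally, so any self-map has degree $\pm1$, and then residual finiteness of the $3$-manifold group (hence the Hopf property) together with asphericity gives a global homotopy equivalence without any piecewise analysis; the piecewise work lives entirely inside Wang's cited theorem for non-trivial graph manifolds.

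Beyond that local error, note that your route is precisely the one this paper sets out to avoid: it relies on the Nielsen--Thurston trichotomy, Thurston's hyperbolization, the positivity of the simplicial volume, and the JSJ structure, i.e.\ the original proofs of Gromov and Wang, upgraded by Waldhausen. The paper instead deduces Theorem \ref{t:surfacebundles} from the general Theorems \ref{t:maingroups} and \ref{t:mainmanifolds}: the only inputs about $\pi_1(\Sigma)$ are residual finiteness, $\chi(\Sigma)\neq0$, and condition (\ref{finiteness}) supplied by the Farb--Lubotzky--Minsky translation-length theorem, after which Lemma \ref{l:center}, Theorems \ref{t:center-finitecoHopf} and \ref{t:co-Hopf}, and Waldhausen's rigidity do the work, with no hyperbolicity, geometric decomposition, or semi-norms anywhere. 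What the classical route buys is geometric information (e.g.\ homotopy to an isometry via Mostow in the pseudo-Anosov case); what the paper's route buys is uniformity and a proof that generalises to fibers of arbitrary dimension.
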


Our goal is to show that the hyperbolicity of the fiber $\Sigma$ or of its fundamental group $\pi_1(\Sigma)$, 
the (possible) hyperbolicity of the total space $E_h$, of one of the JSJ pieces of $E_h$ or of its fundamental group $\pi_1(E_h)$, and the non-vanishing of the simplicial volume of $E_h$ or of any other functorial semi-norm of $E_h$ (such as the Seifert volume) are 
unnecessary conditions to prove Theorem \ref{t:surfacebundles}. The proof is rather based on two properties of $\pi_1(\Sigma)$, namely on the {\em non-vanishing of the Euler characteristic} and on {\em residual finiteness}. We will generalise  Theorem \ref{t:surfacebundles} in every dimension  by removing the assumption on the fiber being a surface. 

 \begin{thm}\label{t:mainmanifolds}
Let $E_h$ be the mapping torus of a homeomorphism $h$ of a closed aspherical manifold $F$, which has non-zero Euler characteristic and residually finite fundamental group. If $\pi_1(F)$ satisfies condition {\normalfont(\ref{finiteness})}, then
 the following are equivalent:
\begin{itemize}
\item[(i)] $\pi_1(E_h)$ has trivial center $C(\pi_1(E_h))$;
\item[(ii)] Every endomorphism of $\pi_1(E_h)$ onto a finite index subgroup induces a 
homotopy equivalence on $E_h$;
\item[(iii)] Every injective endomorphism of $\pi_1(E_h)$ induces a homotopy equivalence on $E_h$;
\item[(iv)] Every self-map of $E_h$ of non-zero degree is a homotopy equivalence.
\end{itemize}
If, moreover, $F$ is topologically rigid (i.e., it satisfies the Borel conjecture), then homotopy equivalence in (ii), (iii) and (iv) can be replaced by a map homotopic to a homeomorphism.
\end{thm}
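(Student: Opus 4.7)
The plan is to use that $E_h$ is aspherical (being the total space of a fiber bundle with aspherical base $S^1$ and fiber $F$), so that the content of (ii)--(iv) becomes purely group-theoretic statements about $G := \pi_1(E_h) \cong K \rtimes_\phi \Z$, where $\phi = h_\ast$ is the monodromy modulo $\mathrm{Inn}(K)$ and $t$ denotes a generator of the $\Z$-factor. Since $F$ is aspherical with $\chi(F)\neq 0$, the group $K$ is torsion-free with trivial center, and a short semidirect-product calculation yields the dichotomy
\[
C(G) \neq 1 \;\Longleftrightarrow\; [\phi] \text{ has finite order in } \mathrm{Out}(K),
\]
with central element $c^{-1}t^q$ whenever $\phi^q = \mathrm{inn}(c)$ (noting that $\phi(c)=c$ since $C(K)=1$). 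This dichotomy drives everything.

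For the easy direction $\neg(\mathrm{i}) \Rightarrow \neg(\mathrm{ii}), \neg(\mathrm{iii}), \neg(\mathrm{iv})$, I would exhibit the endomorphisms $\psi_N\colon G\to G$ defined by $\psi_N|_K = \id$ and $\psi_N(t) = t(c^{-1}t^q)^N$ for $N\geq 1$: they are injective with proper finite-index image of index $1+Nq>1$, realising self-maps of $E_h$ of nonzero degree that are not homotopy equivalences. The mutual equivalences $(\mathrm{ii})\Leftrightarrow(\mathrm{iii})\Leftrightarrow(\mathrm{iv})$ are standard consequences of the $PD^{n+1}$-structure on $G$: by Strebel's theorem an injective endomorphism has image of the same cohomological dimension $n+1$ and hence of finite index, while a self-map of $E_h$ of nonzero degree must have $\pi_1$-image of finite index (else the image has cohomological dimension $\leq n$, killing $H_{n+1}$ and forcing degree zero).

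The heart of the proof is $(\mathrm{i})\Rightarrow(\mathrm{ii})$. Let $\psi\colon G\to G$ have image of finite index. The crucial structural input is $\psi(K)\subseteq K$; this uses that $K$ is, up to commensurability, the unique maximal normal subgroup of cohomological dimension $n$ in $G$, a canonicity statement that depends squarely on $\chi(K)\neq 0$ (and excludes degenerate examples like $G=\Z^2$). Once $\psi(K)\subseteq K$, the endomorphism $\alpha:=\psi|_K$ has image of finite index $e$ in $K$; multiplicativity of $\chi$ under finite covers gives $\chi(K)=e\,\chi(K)$, forcing $e=1$, so $\alpha$ is surjective, and Mal'cev's theorem (finitely generated residually finite groups are Hopfian) gives $\alpha\in\mathrm{Aut}(K)$. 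Writing $\psi(t)=\xi t^d$ with $d\in\Z\setminus\{0\}$ and comparing $\psi(tkt^{-1})$ with $\psi(t)\psi(k)\psi(t)^{-1}$ produces
\[
\alpha\circ\phi \;=\; \mathrm{inn}(\xi)\circ\phi^{d}\circ\alpha \qquad\text{in } \mathrm{Aut}(K),
\]
so that $[\phi^{d}]$ and $[\phi]$ are $\mathrm{Out}(K)$-conjugate. Condition~(\ref{finiteness}), applied to $\theta=\phi$ when $d\geq 2$ and to $\theta=\phi^{-1}$ when $d\leq -2$ (using the inverted relation), forces $[\phi]$ to have finite order in $\mathrm{Out}(K)$ unless $d=\pm 1$. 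Since $C(G)=1$ excludes finite order by the above dichotomy, $d=\pm 1$; then $\psi(G)$ contains $K$ and $\xi t^{\pm 1}$, so $\psi$ is surjective, and any $g\in\ker\psi$ must lie in $K$ (as $\psi$ induces $\pm\id$ on $G/K=\Z$), whence $\alpha(g)=1$ forces $g=1$, so $\psi\in\mathrm{Aut}(G)$.

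The main obstacle will be the fiber-preservation step $\psi(K)\subseteq K$: this is the technical core of the argument, and where the hypothesis $\chi(K)\neq 0$ is indispensable. Granted the equivalence of (i)--(iv), the final clause of the theorem is immediate: a homotopy equivalence between closed aspherical mapping tori is homotopic to a homeomorphism under the topological rigidity of $F$, by Waldhausen~\cite{Wald} in dimension three and by Bartels--L\"uck~\cite{BL} in dimensions greater than four.
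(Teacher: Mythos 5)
Your skeleton is essentially the paper's: asphericity reduces (ii)--(iv) to statements about $G=K\rtimes_\phi\Z$; your dichotomy ``$C(G)\neq1$ iff $[\phi]$ has finite order in $\mathrm{Out}(K)$'' is Lemma \ref{l:center}; your endomorphisms $\psi_N$ are exactly the maps used in the proofs of Theorem \ref{t:Hopf-type} ((ii)$\Rightarrow$(v)) and Theorem \ref{t:semi-norms}; Strebel's theorem plus the degree argument is Corollary \ref{c:degreeinjection}; and the final rigidity clause is handled identically (Waldhausen in dimension three, Bartels--L\"uck above four). The problem is the step you yourself flag as the ``main obstacle'', namely $\psi(K)\subseteq K$: it is not a detail to be filled in later, and the justification you propose --- that $K$ is, up to commensurability, the unique maximal normal subgroup of cohomological dimension $n$ in $G$ --- is false in general, even with $\chi(K)\neq0$. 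A closed hyperbolic surface bundle over the circle with first Betti number at least two fibers in several non-proportional ways; the corresponding fiber subgroups are normal, finitely generated, of cohomological dimension $2$, of non-zero (and in general different) Euler characteristic, and pairwise non-commensurable, while their join has finite index in $G$. So no such canonical ``maximal cd-$n$ normal subgroup'' exists, and indeed one cannot hope to prove $\psi(K)\subseteq K$ a priori for every finite-index-image endomorphism: an automorphism may well carry one fiber subgroup to a non-commensurable one. Since your computation of $e$, the relation $\alpha\circ\phi=\mathrm{inn}(\xi)\circ\phi^{d}\circ\alpha$, and the surjectivity argument all sit on top of this claim, the heart of (i)$\Rightarrow$(ii) is missing.

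The paper gets around this by never asserting fiber preservation in advance. By Hirshon's theorem (Corollary \ref{c:residuallyfiniteinjection}, using residual finiteness of $K$ and hence of $G$) any endomorphism onto a finite-index subgroup is injective; if it is onto, Mal'cev finishes; and if $[G:\psi(G)]=d>1$, Theorem \ref{t:center-finitecoHopf} first shows $K\subseteq\psi(G)$ by comparing $\chi(K)$, $\chi(\psi(K))$ and $\chi(K\cap\psi(G))$, then iterates to get $\psi^m(G)=K\rtimes_{\theta^{d^m}}\Z$ and intersects the nested images to conclude $\psi(K)=K$, and only then extracts the $\mathrm{Out}$-conjugacy relation, which together with condition (\ref{finiteness}) and Lemma \ref{l:center} contradicts $C(G)=1$. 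Two smaller slips in your write-up: your deduction $\chi(K)=e\,\chi(K)$ already presupposes $\chi(\psi(K))=\chi(K)$, i.e.\ injectivity of $\psi|_K$, which you never establish (cite Hirshon, not just Mal'cev, before this point); and for $d\leq-2$ the relation $[\alpha][\phi][\alpha]^{-1}=[\phi]^{d}$ does not become an instance of (\ref{finiteness}) for $\phi^{-1}$ --- instead square it to obtain $[\alpha]^{2}[\phi][\alpha]^{-2}=[\phi]^{d^{2}}$ with $d^{2}>1$ and apply (\ref{finiteness}) directly.
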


Our approach to Theorem \ref{t:mainmanifolds} will be purely algebraic 
and the involved groups need not be fundamental groups of closed aspherical manifolds. The key idea here is to characterise all mapping tori whose normal subgroup satisfies the above properties of $\pi_1(F)$, together with certain finiteness cohomological conditions. 

 \begin{thm}\label{t:maingroups}
Suppose $K$ is a 
residually finite group of type $FP$, such that $\chi(K)\neq0$, $H_n(K;\R)\neq0$ and $H^n(K;\Z K)$ is finitely generated, where $n$ is the cohomological dimension of $K$. If $K$ satisfies {\normalfont(\ref{finiteness})}, then the following are equivalent for the mapping torus $\Gamma_\theta=K\rtimes_\theta\Z$ of any automorphism $\theta\colon K\to K$:
\begin{itemize}
\item[(i)] $C(\Gamma_\theta)=1$;
\item[(ii)] $\Gamma_\theta$ is cofinitely Hopfian, i.e., every endomorphism of $\Gamma_\theta$ whose image is of finite index is an automorphism;
\item[(iii)] $\Gamma_\theta$ is co-Hopfian, i.e., every injective endomorphism of $\Gamma_\theta$ is an automorphism.
\end{itemize}
\end{thm}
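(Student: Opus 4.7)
My plan is to prove the cycle $(iii)\Rightarrow(i)\Rightarrow(ii)\Rightarrow(iii)$.

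The implication $(iii)\Rightarrow(i)$ is the shortest. Since $\chi(K)\neq 0$ forces the centre of $K$ to be trivial (a Gottlieb-type result for type $FP$ groups), any non-trivial $z\in C(\Gamma_\theta)$ has non-trivial image in $\Gamma_\theta/K\cong\Z$; write $z = k_0 t^m$ with $m\geq 1$. Centrality then forces $\theta^m = c_{k_0}$ (an inner automorphism of $K$) and $\theta(k_0) = k_0$, and these two relations let me write down an injective endomorphism of $\Gamma_\theta$ that is the identity on $K$ and sends $z$ to $z^r$ for some $r>1$; its image has infinite index, violating co-Hopfianity.

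The main direction is $(i)\Rightarrow(ii)$, and proceeds in three steps for a given $\phi$ with $[\Gamma_\theta:\phi(\Gamma_\theta)]<\infty$. The first and technical core step shows that after replacing $\phi$ by a power $\phi^N$ one has $\phi^N(K)\subseteq K$; this rests on characterising $K$ inside $\Gamma_\theta$ by its cohomological signature ($\mathrm{cd}(K)=n$, $H^n(K;\Z K)$ finitely generated, $H_n(K;\R)\neq 0$, while $\mathrm{cd}(\Gamma_\theta)=n+1$), combined with residual finiteness of $K$ to rule out persistence of $\phi^N(K)\setminus K$ under iteration. The second step uses Euler-characteristic constraints — multiplicativity $\chi(\phi^N(K))=[K:\phi^N(K)]\,\chi(K)$, the short exact sequence $1\to\ker(\phi^N|_K)\to K\to\phi^N(K)\to 1$, $\chi(K)\neq 0$, and Hopficity of $K$ (finitely generated and residually finite) — to force $\phi^N|_K$ to be an automorphism of $K$. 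The third step examines the induced map on the $\Z$-quotient, which is multiplication by some $d\geq 1$; compatibility with the semidirect-product relation gives
\[
\theta^d \;\equiv\; \xi\,\theta\,\xi^{-1} \pmod{\mathrm{Inn}(K)},\qquad \xi := \phi^N|_K,
\]
so hypothesis $(\ast)$ yields either $d=1$ or $\theta^q\in\mathrm{Inn}(K)$ for some $q>1$. The second alternative, with a $\theta$-fixed $k_0$ satisfying $\theta^q = c_{k_0}$, produces a central element $k_0^{-1}t^q\in C(\Gamma_\theta)$ and contradicts $(i)$. Hence $d=1$, so $\phi^N$ is surjective, and Hopficity of $\Gamma_\theta$ upgrades this to $\phi$ itself being an automorphism.

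For $(ii)\Rightarrow(iii)$ it suffices to show that an injective $\phi$ has finite-index image, after which $(ii)$ concludes. The Lyndon--Hochschild--Serre spectral sequence for $1\to K\to\Gamma_\theta\to\Z\to 1$, combined with finite generation of $H^n(K;\Z K)$, yields a non-zero, finitely generated $H^{n+1}(\Gamma_\theta;\Z\Gamma_\theta)$; a Strebel-type theorem then forces any subgroup of $\Gamma_\theta$ of full cohomological dimension to have finite index, so $\phi(\Gamma_\theta)\cong\Gamma_\theta$ does. The principal obstacle throughout is the first step of $(i)\Rightarrow(ii)$: whereas in the $3$-dimensional case this was handled by hyperbolicity, simplicial volume, or JSJ theory, here it must be extracted purely algebraically from residual finiteness, type $FP$, and the cohomological finiteness conditions — and it is this algebraic substitute that enables the extension to all dimensions.
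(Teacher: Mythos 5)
Your high-level architecture is the same as the paper's (trivial center versus infinite order of $\theta$ in $\mathrm{Out}(K)$; a non-trivial center yields an explicit proper injective endomorphism; $\chi(K)\neq0$ plus condition $(\ast)$ gives the cofinite Hopf property; the top-degree cohomological finiteness promotes it to co-Hopf), but there is a genuine gap exactly at the step you yourself flag as the core, namely Steps 1--2 of (i)$\Rightarrow$(ii). You never establish that a cofinite endomorphism $\phi$ is injective. In the paper this is where residual finiteness actually enters: $K$ residually finite, finitely generated and torsion-free makes $\Gamma_\theta$ finitely generated, torsion-free and residually finite, and Hirshon's theorem then shows that every endomorphism onto a finite-index subgroup is injective. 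Your proposed use of residual finiteness (``rule out persistence of $\phi^N(K)\setminus K$ under iteration'') is not an argument, and without injectivity Step 2 collapses: you do not know $[K:\phi^N(K)]<\infty$, you do not know $\chi(\phi^N(K))=\chi(K)$, and multiplicativity of $\chi$ over $1\to\ker(\phi^N|_K)\to K\to\phi^N(K)\to1$ requires the kernel to be of finite homological type, which you have no way to guarantee. Moreover, the cohomological ``signature'' cannot characterize subgroups of $K$ inside $\Gamma_\theta$: a subgroup of cohomological dimension $n$ need not lie in $K$, so Step 1 as sketched is not only unproven but rests on a false characterization. The paper's actual mechanism is different and concrete: for an injective $\varphi$ with image of index $d$, write $\varphi(\Gamma_\theta)=(K\cap\varphi(\Gamma_\theta))\rtimes\Z$; since $\chi(\varphi(K))=\chi(K)\neq0$, multiplicativity of $\chi$ over finite-index subgroups forces $K\cap\varphi(\Gamma_\theta)=K$, hence $\varphi(\Gamma_\theta)=K\rtimes_{\theta^{d}}\Z$; iterating gives $\varphi^m(\Gamma_\theta)=K\rtimes_{\theta^{d^m}}\Z$, and if $d>1$ then $\bigcap_m\varphi^m(\Gamma_\theta)=K$, which yields $\varphi(K)\subseteq K$ and then $\varphi(K)=K$; only then does the semidirect relation give $\theta^{\pm d}$ conjugate to $\theta$ in $\mathrm{Out}(K)$, and condition $(\ast)$ finishes as in your Step 3 (where you also need the small computation, done in the paper's lemma on the center, showing that $\theta^q\in\mathrm{Inn}(K)$ together with $C(K)=1$ forces the conjugating element to be $\theta$-fixed before $k_0^{-1}t^q$ is central).

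Two further points. In (iii)$\Rightarrow$(i) your endomorphism is the identity on $K$, so its image contains $K$ and has finite index greater than one, not infinite index; this is harmless (a proper injective endomorphism already contradicts co-Hopficity) but should be corrected. In (ii)$\Rightarrow$(iii) there is no general ``Strebel-type theorem'' asserting that subgroups of full cohomological dimension have finite index: this fails for free groups precisely because $H^1(F_r;\Z F_r)$ is not finitely generated, and $\Gamma_\theta$ is not known to be a Poincar\'e Duality group here. What is needed, and what the paper supplies, is the explicit argument: Fel'dman's equality gives $\mathrm{cd}(\varphi(\Gamma_\theta))=n+1$, and the finite generation of $H^{n+1}(\Gamma_\theta;\Z\Gamma_\theta)$ together with Shapiro's lemma expresses $H_{n+1}(\varphi(\Gamma_\theta);\Z)$ through the $\Gamma_\theta$-invariants of $\Z\Gamma_\theta\otimes_{\Z\varphi(\Gamma_\theta)}\Z$, which vanish when the index is infinite. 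You name the correct inputs, but the step requires this argument rather than an appeal to Strebel's theorem.
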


It should be pointed out that condition (\ref{finiteness}) is not generally true once the finiteness cohomological conditions of Theorem \ref{t:maingroups} are relaxed; see  Example~\ref{ex:positiveEuler-infiniteorder1}. We will discuss condition (\ref{finiteness}) in more detail in Section \ref{ss:Lefschetz}. 

Recall that a free group $F_r$ on $r > 1$ generators is residually finite and has Euler characteristic $\chi(F_r) = 1- r\neq 0$. It moreover satisfies condition (\ref{finiteness}) by a result of Alibegovi\'c~\cite{Ali}, again on translation lengths. A result analogous  to Theorem \ref{t:maingroups}, for free-by-cyclic groups, was proved by Bridson, Groves, Hillman and Martin~\cite[Theorem B]{BGHM} (note that $H^1(F_r,\Z F_r)$ is not finitely generated). In contrast to our Theorem \ref{t:maingroups}, that result does not include the co-Hopf property (\cite[Remark 6.4]{BGHM}), but it includes another Hopf-type condition, called {\em hyper-Hopf} property. 
We will prove (Theorems \ref{t:Hopf-type} and \ref{t:finite-hyperHopf}) that all  of the equivalences (except the hyper-Hopf property) shown in~\cite{BGHM} for $F_r$-by-$\Z$ groups hold as well for $K$-by-$\Z$ groups, where $K$ is as in Theorem \ref{t:maingroups}. For surface-by-cyclic groups, we will prove that all Hopf-type properties are equivalent, including both the co-Hopf and the hyper-Hopf properties (Theorem \ref{t:surface}).

\subsection{Applications}

Theorems \ref{t:mainmanifolds} and \ref{t:maingroups} apply to a variety of problems in topology and geometry. The proofs of the results below will be given in Section \ref{s:applications} and they all reveal that our mapping tori with trivial center behave similarly to non-elementary Gromov hyperbolic groups.

\subsection*{The 3-manifold case}

Nielsen-Thurston's classification of surface automorphisms~\cite{Nie,Thu} tells us that an automorphism $h$ of a hyperbolic surface is either periodic, reducible or pseudo-Anosov. The mapping torus $E_h$ admits no self-maps of degree greater than one when $h$ is either pseudo-Anosov or reducible and  there is a hyperbolic JSJ piece in $E_h$, because in those cases $E_h$ has non-zero simplicial volume by Gromov's work~\cite{Gro}. Wang~\cite{Wan} showed that the remaining mapping tori of reducible surface automorphisms -- which are exactly those consisting only of Seifert JSJ pieces and thus their  simplicial volume vanishes -- do not admit self-maps of degree greater than one, by exploiting the hyperbolic orbifold bases of the Seifert JSJ pieces. Subsequently, Derbez and Wang~\cite{DW} showed that the latter 3-manifolds have virtually positive Seifert volume. 
Theorem \ref{t:mainmanifolds} gives a new, uniform proof in all cases, without using hyperbolicity or semi-norms.

\begin{thm}
\label{t:GromovWang}
Let $h$ be a diffeomorphism of a closed hyperbolic surface $\Sigma$. The mapping torus $E_h$ admits a self-map of degree greater than one if and only if $h$ is periodic.
\end{thm}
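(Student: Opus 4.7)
The plan is to deduce this theorem as a direct application of Theorem~\ref{t:mainmanifolds} with fiber $F = \Sigma$. The hypotheses are easy to check: $\Sigma$ is closed and aspherical with $\chi(\Sigma) \neq 0$ and residually finite fundamental group, and condition~(\ref{finiteness}) for $\pi_1(\Sigma)$ is precisely the Farb--Lubotzky--Minsky theorem on translation lengths in $\mathrm{Out}(\pi_1(\Sigma))$~\cite{FLM} already cited in the introduction. So the equivalences (i)--(iv) of Theorem~\ref{t:mainmanifolds} hold for $E_h$, and the work reduces to translating between the algebraic condition (i) and the geometric condition ``$h$ is periodic''.

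Writing $\Gamma = \pi_1(\Sigma) \rtimes_{h_\ast} \mathbb{Z}$, a short computation in the semidirect product shows that $(a,m) \in \Gamma$ lies in $C(\Gamma)$ precisely when $a \in C(\pi_1(\Sigma))$ is fixed by $h_\ast$ and $h_\ast^m$ equals conjugation by $a^{-1}$. Since hyperbolic surface groups are centerless, this forces $m \neq 0$ and $h_\ast^m \in \mathrm{Inn}(\pi_1(\Sigma))$. Via the Dehn--Nielsen--Baer isomorphism $\mathrm{MCG}(\Sigma) \cong \mathrm{Out}(\pi_1(\Sigma))$, the latter says that the class of $h$ in the mapping class group has finite order, and by Nielsen realization this is equivalent to $h$ being isotopic to a periodic diffeomorphism, i.e., to $h$ being periodic modulo the isotopy ambiguity inherent to the mapping-torus construction. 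Hence $C(\Gamma) \neq 1$ if and only if $h$ is periodic.

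Combining the previous two steps settles the nontrivial direction: if $h$ is not periodic, then (i) of Theorem~\ref{t:mainmanifolds} holds, hence by (i)$\Rightarrow$(iv) every nonzero-degree self-map of $E_h$ is a homotopy equivalence and therefore has degree $\pm 1$, ruling out any self-map of degree greater than one. For the converse, suppose $h^k = \mathrm{id}$ for some $k \geq 1$. Then the self-map $f(x,z) = (x, z^{k+1})$ of $\Sigma \times S^1$ commutes with the generator $(x,z) \mapsto (h(x), e^{2\pi i /k} z)$ of the $\mathbb{Z}/k$-action whose quotient is $E_h$ (the required identity $\zeta^{k+1} = \zeta$ holds since $\zeta^k = 1$), so $f$ descends to $\bar f \colon E_h \to E_h$, and comparing degrees across the $k$-fold covering gives $\deg(\bar f) = \deg(f) = k+1 > 1$.

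The main obstacle is concentrated in the first direction and is entirely absorbed by Theorem~\ref{t:mainmanifolds}, whose algebraic content (cofinitely Hopfian $\Leftrightarrow$ centerless) is precisely what allows the argument to bypass any hyperbolicity of $\Sigma$ or $E_h$ and any non-vanishing semi-norm on $E_h$. Beyond invoking that theorem, the only genuinely nonformal ingredient is the classical dictionary between periodicity of a surface diffeomorphism and innerness of a power of the induced automorphism, supplied by Dehn--Nielsen--Baer together with Nielsen realization.
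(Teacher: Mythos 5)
Your proof is correct, and its engine is the same as the paper's: Theorem~\ref{t:mainmanifolds}, with condition~(\ref{finiteness}) supplied by Farb--Lubotzky--Minsky (Theorem~\ref{t:FLM}), together with the center computation in the semidirect product. The differences are in the packaging. The paper deduces Theorem~\ref{t:GromovWang} from the omnibus theorem of Section~\ref{ss:3-manifolds}, which deliberately loops in 3-manifold geometry: the implication ``not periodic $\Rightarrow$ trivial center'' is routed through ``$E_h$ hyperbolic or with non-trivial JSJ decomposition'' and the Seifert fiber space theorem of Casson--Jungreis~\cite{CJ} and Gabai~\cite{Gab}, whereas you stay algebraic, translating ``periodic'' into ``$h_*$ has finite order in $\mathrm{Out}(\pi_1(\Sigma))$'' via Dehn--Nielsen--Baer and Nielsen realization and then applying the center criterion (this is exactly Lemma~\ref{l:center}, which you could simply cite). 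For the converse direction, the paper's self-map of degree $m+1$ is the algebraic endomorphism $t\mapsto\beta t^{m+1}$ realized by asphericity, with degree computed via Corollary~\ref{c:degreeinjection} (see Theorems~\ref{t:semi-norms} and~\ref{t:Hopfdetail}); your covering-space construction on $\Sigma\times S^1$ with $z\mapsto z^{k+1}$ descending through the $\Z/k$-action is a geometric realization of essentially the same map and works. One small slip: your characterization of central elements is off --- $(a,m)$ is central precisely when $h_*(a)=a$ and $h_*^m$ is conjugation by $a$; the requirement $a\in C(\pi_1(\Sigma))$ is not part of it (taken literally it would wrongly force $h_*^m=\mathrm{id}$) --- but the only consequence you use, namely that a non-trivial center forces $m\neq0$ and $h_*^m\in\mathrm{Inn}(\pi_1(\Sigma))$ because $\pi_1(\Sigma)$ is centerless, is correct and is precisely Lemma~\ref{l:center}.
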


Combining Theorem \ref{t:GromovWang} (and its proof) with Waldhausen's rigidity for Haken 3-manifolds~\cite{Wald} we obtain an alternative proof for Theorem \ref{t:surfacebundles}.

\subsection*{Gromov-Thurston norms}

As motivated by the Thurston norm for 3-manifolds~\cite{Thu2}, the simplicial volume $\|M\|$ of a closed oriented $n$-dimensional manifold $M$ measures how efficiently the fundamental class of $M$ can be represented by real cycles~\cite{Gro}. An important property of the simplicial volume is the {\em functorial property}, which means that whenever there is a map $f\colon M\to N$, then  $\|M\|\geq|\deg(f)|\|N\|$. The existence and computation of functorial semi-norms in the sense of Gromov and Thurston is a hard problem~\cite{Gro2,MT}. As mentioned above, a 3-manifold $E_h$ given in Theorem \ref{t:surfacebundles}  admits a non-vanishing functorial semi-norm if and only if $h$ is not periodic~\cite{Gro,DW}. Theorem \ref{t:mainmanifolds}  generalises the 3-manifold case in every dimension. 

\begin{thm}\label{t:seminorms}
Let $E_h$ be as in Theorem \ref{t:mainmanifolds}. There is a functorial semi-norm that is non-zero and finite on $E_h$  if and only if $h_*$ has infinite order in $\mathrm{Out}(\pi_1(F))$. 
\end{thm}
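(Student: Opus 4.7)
The plan is to convert the dichotomy on semi-norms into one on the set of self-map degrees of $E_h$, which Theorem~\ref{t:mainmanifolds} controls via the center of $\pi_1(E_h)$. As an algebraic preliminary, $F$ aspherical with $\chi(F)\neq 0$ forces $C(\pi_1(F))=1$ by Gottlieb's theorem; a direct computation in $\pi_1(E_h)=\pi_1(F)\rtimes_{h_*}\Z$ then shows that an element $(k,m)\neq(1,0)$ is central precisely when $m\neq 0$ and $h_*^m=\iota_{k^{-1}}$. Hence $C(\pi_1(E_h))=1$ if and only if $h_*$ has infinite order in $\mathrm{Out}(\pi_1(F))$.

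Assume first that $h_*$ has infinite order in $\mathrm{Out}(\pi_1(F))$. By the preceding equivalence and Theorem~\ref{t:mainmanifolds}, every self-map of $E_h$ of non-zero degree is a homotopy equivalence, so the set of degrees of self-maps of $E_h$ lies in $\{-1,0,1\}$. I would then define a functorial semi-norm on $H_n(-;\R)$ (with $n=\dim E_h$) by
\[
\nu(\alpha)=\inf\Bigl\{\sum_i|c_i|\,:\,\alpha=\sum_i c_i (f_i)_*[E_h],\ f_i:E_h\to X\Bigr\},
\]
for $\alpha\in H_n(X;\R)$, with $\nu(\alpha)=+\infty$ if no such representation exists. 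Functoriality is immediate by post-composing each $f_i$ with $g:X\to Y$. On $[E_h]\in H_n(E_h;\R)\cong\R$, the identity representation gives $\nu([E_h])\leq 1$, while any representation is constrained by $\sum c_i \deg(f_i)=1$, which combined with $|\deg(f_i)|\leq 1$ forces $\sum|c_i|\geq 1$; thus $\nu([E_h])=1$, which is both finite and non-zero.

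Conversely, suppose $h_*^n=\iota_g$ for some $n>0$. From $\iota_{h_*(g)}=h_*\iota_g h_*^{-1}=h_*^n=\iota_g$ and $C(\pi_1(F))=1$ I obtain $h_*(g)=g$. I then define $\tilde\phi:\pi_1(F)\rtimes_{h_*}\Z\to\pi_1(F)\rtimes_{h_*}\Z$ by $\tilde\phi|_{\pi_1(F)}=\mathrm{id}$ and $\tilde\phi(1,1)=(g^{-1},n+1)$. The only non-trivial relation to verify is $(1,1)(k,0)(1,-1)=(h_*(k),0)$, whose $\tilde\phi$-image reduces via $h_*^n=\iota_g$ and $h_*(g)=g$ to the identity $h_*^{n+1}(kg)=g\,h_*(k)$. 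As $E_h$ is aspherical, $\tilde\phi$ is induced by a self-map $E_h\to E_h$ that is the identity on fibers and covers the degree-$(n+1)$ self-map of $S^1$, hence of total degree $n+1>1$. Any functorial semi-norm $\nu$ then satisfies $\nu([E_h])\geq(n+1)\,\nu([E_h])$, forcing $\nu([E_h])\in\{0,+\infty\}$.

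The main obstacle is the construction of the semi-norm in the first direction, as standard candidates (simplicial volume and higher-dimensional analogues) may vanish on $E_h$; the ``domination'' semi-norm $\nu$ above, tailored to $E_h$ and exploiting the tight bound $|\deg f|\leq 1$ supplied by Theorem~\ref{t:mainmanifolds}, sidesteps this problem by design.
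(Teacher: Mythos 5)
Your proposal is correct and follows essentially the same route as the paper: the forward direction uses Theorem \ref{t:mainmanifolds} (via $C(\pi_1(E_h))=1$, as in Lemma \ref{l:center} and Gottlieb's theorem) to get $D(E_h)\subseteq\{-1,0,1\}$ and then evaluates a domination-type functorial semi-norm to $1$ on $[E_h]$, while the converse constructs exactly the paper's injective endomorphism $t\mapsto\beta t^{m+1}$ onto an index-$(m+1)$ subgroup, realized by a self-map of degree $m+1$, forcing every functorial semi-norm to be $0$ or $\infty$. The only cosmetic differences are that you build the semi-norm explicitly as the $\ell^1$-type norm generated by $[E_h]$ instead of quoting the degree semi-norm of (\ref{eq:dominationnorm}) and \cite{CL}, and that your fiberwise degree computation is most cleanly justified by citing Corollary \ref{c:degreeinjection}, as the paper does.
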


\subsection*{The Hopf problem}

A long-standing question of Hopf asks whether every self-map $f$ of a closed manifold $M$ of degree $\pm1$  is a homotopy equivalence; this is Problem 5.26 in Kirby's list~\cite{Kir} (see also~\cite{Neum,Hau}). If $M$ is aspherical and $\pi_1(M)$ is Hopfian, then the answer is affirmative, since maps of degree $\pm1$ are $\pi_1$-surjective. The fundamental groups of our mapping tori $E_h$ are Hopfian being residually finite by Mal'cev's theorem~\cite{Mal}. More interestingly, all known examples of self-maps of aspherical manifolds of non-zero degree are either homotopic to a non-trivial covering or homotopic to  a homeomorphism when the degree is $\pm1$, as predicted by the Borel conjecture~\cite{BL,Bel,BHM,FJ,Gro,Gro1,Min,Min1,NeoHopf,Neoorder,Sel1,Sel2,Wald,Wan}. It is therefore natural to ask whether this is always true, strengthening Hopf's problem for the class of aspherical manifolds~\cite[Problem 1.2]{NeoHopf} (see Question \ref{q:Hopfstrong}). We obtain an affirmative answer for $E_h$.

\begin{thm}\label{t:Hopf}
Let $E_h$ be as in Theorem \ref{t:mainmanifolds}, such that $F$ is topologically rigid. Every self-map of $E_h$ of non-zero degree is homotopic either to a homeomorphism or to a
 non-trivial covering. In the latter case, $E_h$ is virtually trivial.
 \end{thm}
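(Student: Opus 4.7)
The plan is to apply Theorem~\ref{t:mainmanifolds} and split the analysis according to whether $C(\pi_1(E_h))$ is trivial, exploiting its center-versus-Hopf equivalences. If $C(\pi_1(E_h))=1$, then Theorem~\ref{t:mainmanifolds}(i)$\Leftrightarrow$(iv) shows that every self-map of $E_h$ of non-zero degree is a homotopy equivalence; topological rigidity of $F$ propagates to $E_h$ (by Waldhausen in dimension three or Bartels--L\"uck in higher dimensions, as invoked in Theorem~\ref{t:mainmanifolds}), so such a map is homotopic to a homeomorphism and no virtual triviality claim is required here.

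Suppose now $C(\pi_1(E_h))\neq 1$; the first subgoal is to show $E_h$ is virtually trivial. Write $\pi_1(E_h)=K\rtimes_\theta\Z$ with $K=\pi_1(F)$ and $\theta=h_*$. Gottlieb's theorem (applicable since $\chi(K)\neq 0$ and $F$ is aspherical) gives $C(K)=1$, so any non-trivial central element of $\pi_1(E_h)$ must project non-trivially to $\Z$, and the commutation condition in the semidirect product forces $\theta^n\in\mathrm{Inn}(K)$ for some $n\geq 1$. The monodromy $h^n$ of the $n$-fold cyclic cover $E_{h^n}$ then induces an inner automorphism on $K$, hence is freely homotopic to $\id_F$; topological rigidity of $F$ promotes this to an isotopy, so $E_{h^n}\cong F\times S^1$ and $E_h$ is virtually trivial.

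To analyze an arbitrary self-map $f\colon E_h\to E_h$ of non-zero degree $d$ in this case, the key step is to show $f_*$ is injective. Lift $f$ to $\tilde f\colon F\times S^1\to F\times S^1$ on a common finite cover; since $C(K)=1$, the induced map $\tilde f_*$ takes the form $(a,n)\mapsto(\phi(a)\zeta^n,\chi(a)+kn)$ with $\zeta\in K$ centralizing $\phi(K)$, and non-vanishing degree forces $\phi(K)$ to have finite index in $K$ and $k\neq 0$. The PD$^n$-property of $K$ (a non-trivial normal subgroup yields a quotient of strictly smaller cohomological dimension, incompatible with it being isomorphic to a finite-index subgroup of $K$) shows $\phi$ is injective, while triviality of centralizers of finite-index subgroups in centerless PD$^n$-groups forces $\zeta=e$; hence $\tilde f_*$, and consequently $f_*$, is injective. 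With $H:=\mathrm{im}(f_*)$ of finite index, letting $q\colon\tilde E\to E_h$ be the associated covering, the lift $\tilde f\colon E_h\to\tilde E$ of $f$ is a $\pi_1$-isomorphism, hence a homotopy equivalence of aspherical manifolds and, by topological rigidity of the virtually trivial $\tilde E$ and $E_h$, homotopic to a homeomorphism $\eta$. Thus $f\simeq q\circ\eta$ is a covering $E_h\to E_h$ of degree $d$: a homeomorphism if $|d|=1$, a non-trivial covering otherwise. The main obstacle will be the injectivity step, specifically the centralizer analysis for finite-index subgroups of the PD$^n$-group $K$, together with the propagation of topological rigidity from $F$ to the mapping tori involved.
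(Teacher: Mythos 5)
Your overall assembly (lift $f$ to the cover corresponding to $\im(f_*)$, get a homotopy equivalence of aspherical manifolds, use Waldhausen/Bartels--L\"uck rigidity of the mapping torus, conclude $f$ is homotopic to a covering, and tie the degree\,$>1$ case to virtual triviality via Theorem \ref{t:mainmanifolds}) is the same as the paper's, and your treatment of the case $C(\pi_1(E_h))=1$ is fine. The genuine gap is in the case $C(\pi_1(E_h))\neq 1$, where you try to prove $\pi_1$-injectivity of $f$ by hand on a product cover. The paper gets injectivity in one stroke and uniformly in both cases: $\pi_1(E_h)$ is itself finitely generated, torsion-free and residually finite (finitely generated residually finite-by-cyclic groups are residually finite, \cite[Ch.\ III, Theorem 7]{Mil}), so Hirshon's theorem (packaged as Corollary \ref{c:residuallyfiniteinjection}) applies directly to $f_*$, whose image has finite index because $\deg f\neq 0$. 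Your substitute argument has two concrete problems. First, the lift ``$\tilde f\colon F\times S^1\to F\times S^1$ on a common finite cover'' need not exist: $f$ lifts to a self-map of the cover $E_{h^n}\cong F\times S^1$ only if $f_*$ carries the subgroup $\pi_1(E_{h^n})$ into (a conjugate of) itself, which is not automatic; in general you only get a map from the cover corresponding to $f_*^{-1}(\pi_1(E_{h^n}))$ -- a mapping torus that is not obviously a product -- to $F\times S^1$. (The paper's Remark \ref{r:Wanggeneral} shows how delicate this ``finite-order monodromy'' case is when one refuses to use residual finiteness of the total space: one must first prove that $f_*$ preserves the center and then run an Euler-characteristic/Hopficity argument on the orbifold quotient $Q$.)

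Second, your injectivity step for $\phi$ rests on the claim that a non-trivial normal subgroup of the $PD^n$-group $K$ yields a quotient of strictly smaller cohomological dimension. That is false as a general principle: quotients can have cohomological dimension equal to $n$ or infinite (any finite quotient has infinite cohomological dimension over $\Z$), and for an extension $1\to N\to K\to Q\to 1$ one only has $\mathrm{cd}(K)\leq\mathrm{cd}(N)+\mathrm{cd}(Q)$, which gives no contradiction when $Q$ is $PD^n$. Ruling out a surjection of $K$ onto a finite-index subgroup with non-trivial kernel is exactly the content of Hirshon's theorem applied to $K$ (or Mal'cev for the surjective case), i.e.\ it uses residual finiteness -- the very ingredient your route was trying to bypass. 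A smaller inaccuracy: topological rigidity does not promote the free homotopy $h^n\simeq\mathrm{id}_F$ to an isotopy; what you actually need (and what suffices) is that mapping tori of homotopic homeomorphisms are homotopy equivalent, so $E_{h^n}\simeq F\times S^1$, with the homeomorphism then coming from rigidity of the mapping torus (Theorem \ref{t:BL}, resp.\ Waldhausen), or simply the group-level statement of Lemma \ref{l:center} if ``virtually trivial'' is read as virtually $F\times S^1$ at the level of $\pi_1$.
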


In the course of this study, we will also generalise in higher dimensions Wang's $\pi_1$-injectivity for self-maps of Seifert 3-manifolds that are modeled on the geometry $\mathbb{H}^2\times\R$~\cite{Wan}. 
We will use the residual finiteness of the fundamental group of the fiber, but not of the total space, that is, we will not use Hirshon's generalisation~\cite{Hir} of Mal'cev's theorem~\cite{Mal}; see Remark \ref{r:Wanggeneral} and Proposition \ref{p:Wang}. 

\subsection*{Gromov's ordering and rigidity}

Given two closed oriented manifolds $M$ and $N$ of the same dimension, we say that $M$ {\em dominates} $N$ if there is a map of non-zero degree from $M$ to $N$. Gromov suggested studying the domination relation as defining a partial ordering on the homotopy types of manifolds~\cite{CT}. In any dimension not equal to four, if $M$ and $N$  are negatively curved or, more generally, aspherical with hyperbolic fundamental groups, such that $M$ and $N$ dominate each other, then $M$ and $N$ are homeomorphic. This follows from profound results both at algebraic and geometric levels, such as Sela's study of Hopf-type properties of hyperbolic groups~\cite{Sel1,Sel2}, works by Gromov~\cite{Gro1,Gro2} and Mineyev~\cite{Min,Min1} on bounded cohomology and hyperbolic groups, and topological rigidity results by Farrell and Jones~\cite{FJ} and Bartels and L\"uck~\cite{BL}; concrete statements are given in~\cite{BHM} and in~\cite{NeoHopf}. Our results imply that the same topological rigidity holds for all non-virtually trivial mapping tori $E_{h}$.

\begin{thm}\label{t:Gromovorder}
For $i=1,2$, let $E_{h_i}$ be as in Theorem \ref{t:mainmanifolds}, such that $C(\pi_1(E_{h_i}))=1$. If $E_{h_1}$ and $E_{h_2}$ dominate each other, then they are homotopy equivalent. If,  moreover, the $F_i$ are topologically rigid, then  $E_{h_1}$ and $E_{h_2}$ are homeomorphic.
\end{thm}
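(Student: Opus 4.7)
The plan is to reduce mutual domination between $E_{h_1}$ and $E_{h_2}$ to self-domination of a single mapping torus by composing the given maps, so that Theorem \ref{t:mainmanifolds} applies directly. Let $f\colon E_{h_1}\to E_{h_2}$ and $g\colon E_{h_2}\to E_{h_1}$ be the maps of non-zero degree witnessing mutual domination.

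The compositions $g\circ f$ and $f\circ g$ are self-maps of non-zero degree of $E_{h_1}$ and $E_{h_2}$, respectively, since the degree is multiplicative. Because $C(\pi_1(E_{h_i}))=1$ by hypothesis, the equivalence (i)$\Leftrightarrow$(iv) of Theorem \ref{t:mainmanifolds} applies to each $E_{h_i}$, so both compositions are homotopy equivalences. In particular $(g\circ f)_*=g_*\circ f_*$ and $(f\circ g)_*=f_*\circ g_*$ are isomorphisms of fundamental groups, which forces each of $f_*$ and $g_*$ to be an isomorphism between $\pi_1(E_{h_1})$ and $\pi_1(E_{h_2})$.

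Each $E_{h_i}$ fibers over $S^1$ with aspherical fiber $F_i$, hence is itself aspherical (the long exact sequence of homotopy groups collapses). A map between aspherical CW complexes which induces an isomorphism on fundamental groups is a homotopy equivalence by Whitehead's theorem, so $f$ is a homotopy equivalence, yielding the first conclusion. For the second conclusion, if each $F_i$ is topologically rigid then $E_{h_i}$ itself inherits the Borel property by Waldhausen's rigidity~\cite{Wald} in dimension three and Bartels--L\"uck's rigidity~\cite{BL} in dimensions greater than four, exactly as invoked in the last sentence of Theorem \ref{t:mainmanifolds}; therefore the homotopy equivalence $f$ is homotopic to a homeomorphism. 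There is no genuinely new obstacle: all substantive work has been absorbed into Theorem \ref{t:mainmanifolds} and the cited rigidity inputs, and the deduction above is essentially a diagram chase through $g\circ f$.
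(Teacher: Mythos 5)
Your argument is correct and is essentially the paper's own proof: the paper likewise composes the two domination maps, applies Theorem \ref{t:mainmanifolds} (trivial center implies every non-zero degree self-map is a homotopy equivalence) to $g\circ f$ and $f\circ g$, deduces that $f_*$ and $g_*$ are isomorphisms, invokes asphericity of the mapping tori to upgrade this to a homotopy equivalence, and then uses Waldhausen and Bartels--L\"uck rigidity for the homeomorphism statement.
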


\subsection*{The Lichnerowicz problem (quasiregular maps).}

Lelong-Ferrand's proof~\cite{Lel} of the Lichnerowicz conjecture~\cite{Lic} suggests to determine all closed  manifolds $M$  
which admit non-injective uniformly quasiregular maps, i.e., maps that preserve some bounded measurable conformal structure on $M$; this is known as the (generalised)  Lichnerowicz problem; cf.~\cite[p. 1614]{BHM} and~\cite[p. 2092]{MMP}. Bridson, Hinkkanen and Martin~\cite{BHM} showed that  closed aspherical manifolds with hyperbolic fundamental group do not admit any non-trivial quasiregular maps. Our Theorem \ref{t:maingroups} implies that the same is true for all mapping tori $E_h$ with trivial center.

\begin{thm}\label{t:quasiregular}
If $M$ is a closed manifold whose fundamental group is given as in Theorem \ref{t:maingroups} and has trivial center, then every quasiregular map $f\colon M\to M$ is a homeomorphism.
\end{thm}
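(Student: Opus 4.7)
The plan is to combine Reshetnyak's structure theory of quasiregular maps with the cofinitely Hopfian property supplied by Theorem~\ref{t:maingroups}. A quasiregular map is by definition non-constant, and Reshetnyak's theorem then asserts that $f\colon M\to M$ is continuous, discrete and open, hence a branched covering of the closed connected manifold $M$ onto itself. In particular $f$ is surjective and has a well-defined non-zero integer degree $d=\deg(f)$, so the image $f_\ast\pi_1(M)\subseteq\pi_1(M)=\Gamma_\theta$ is of finite index dividing $|d|$.

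Next, since $C(\Gamma_\theta)=1$, Theorem~\ref{t:maingroups}(ii) guarantees that $\Gamma_\theta$ is cofinitely Hopfian, and consequently $f_\ast$ is an automorphism. To upgrade this to $|d|=1$, I would appeal to Theorem~\ref{t:mainmanifolds}(iv), which says that every self-map of non-zero degree of the aspherical mapping torus $M=E_h$ is a homotopy equivalence; hence $|d|=1$. It then remains to show that a degree $\pm 1$ quasiregular self-map of $M$ is automatically a homeomorphism. The branch locus $B_f$ and its image $f(B_f)$ have topological codimension at least two (Chernavskii--V\"ais\"al\"a), so the restriction $M\setminus f^{-1}(f(B_f))\to M\setminus f(B_f)$ is an honest degree $\pm 1$ covering, hence a homeomorphism between open dense subsets; continuity together with the fact that $f(B_f)$ has empty interior promotes this to a continuous open bijection $f\colon M\to M$, which between compact Hausdorff spaces must be a homeomorphism.

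The main obstacle is the middle step, producing $|\deg(f)|=1$ from the mere fact that $f_\ast$ is a $\pi_1$-automorphism. A $\pi_1$-isomorphism by itself does not constrain the degree of a self-map, so this is the point where the asphericity of $M$ and thus the full topological content of Theorem~\ref{t:mainmanifolds}, rather than only the algebraic statement of Theorem~\ref{t:maingroups}, are essentially used.
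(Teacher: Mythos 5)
Your first two steps (Reshetnyak/\v Chernavski\u\i--V\"ais\"al\"a structure theory, then Lemma \ref{l:WalshSmale} plus the cofinitely Hopfian property from Theorem \ref{t:maingroups} to get $f_*\in\mathrm{Aut}(\pi_1(M))$) coincide with the paper's proof, which runs exactly through Theorem \ref{t:quasiregulargeneral}. The genuine gap is your middle step. Theorem \ref{t:quasiregular} does \emph{not} assume that $M$ is aspherical, nor that $M$ is the mapping torus $E_h$ of a homeomorphism of a closed aspherical manifold $F$: the hypothesis is only that the abstract group $\pi_1(M)$ is a $\Gamma_\theta=K\rtimes_\theta\Z$ as in Theorem \ref{t:maingroups} with trivial center (this is precisely why the statement refers to the group-theoretic Theorem \ref{t:maingroups} rather than to Theorem \ref{t:mainmanifolds}, why Theorem \ref{t:quasiregulargeneral} is formulated for finite CW-complexes, and why the paper stresses in Remark \ref{r:WS} and Section \ref{s:non-aspherical} that these applications go beyond aspherical manifolds -- e.g.\ connected sums $E_h\#\Sigma$ with $\Sigma$ simply connected have the same fundamental group but are not aspherical). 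Invoking Theorem \ref{t:mainmanifolds}(iv) to conclude $|\deg(f)|=1$ therefore imports an assumption that is not available; as you yourself observe, a $\pi_1$-automorphism alone does not control the degree of a self-map of a non-aspherical manifold, so your argument proves the theorem only in the special case $M=E_h$ aspherical and leaves the stated generality untouched.

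The paper's proof avoids any homotopy-theoretic degree argument on an aspherical model: after $f_*\in\mathrm{Aut}(\pi_1(M))$ it works directly with the branched-covering structure, using Theorem \ref{t:branch} and the Hurewicz--Wallman fact that $B_f$ does not locally separate $M$, so that $f$ restricts to a genuine covering $M\setminus f^{-1}(f(B_f))\to M\setminus f(B_f)$; the number of sheets of this covering is then tied to the index of the image of $\pi_1$ (this is the point borrowed from the Bridson--Hinkkanen--Martin scheme behind Theorem \ref{t:BHM}), which is $1$ because $f_*$ is onto, and one concludes $B_f=\emptyset$ and that $f$ is a one-sheeted covering. If you want to salvage your write-up, this covering-space count (not Theorem \ref{t:mainmanifolds}) is what must replace your middle step. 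A secondary, smaller point: your final upgrade ``degree $\pm1$ branched cover $\Rightarrow$ continuous open bijection'' should be phrased via local indices -- at a point of $B_f$ the local index is at least $2$, while the sum of local indices over any fibre equals $|\deg(f)|=1$, so $B_f=\emptyset$ and $f$ is injective; merely knowing $f$ is injective off a dense open set does not by itself give global injectivity.
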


Strong motivation for the study of quasiregular maps stems  from works of Walsh~\cite{Wal} and Smale~\cite{Sma} (see Lemma \ref{l:WalshSmale}), which establish a tight  connection between the cofinite Hopf property of groups and open maps of CW-complexes. 
Thus, Theorem \ref{t:maingroups} and its consequences apply to open maps between CW-complexes whose fundamental groups satisfy the conditions of Theorem \ref{t:maingroups} and have trivial center; see Remark~\ref{r:WS} and Theorem \ref{t:quasiregulargeneral}.

\subsection{Euler characteristic, $L^2$-Betti numbers, and Lefschetz numbers}

It is worth pointing out that the arguments in this paper that require the non-vanishing of the Euler characteristic can be carried out 
by assuming that {\em some}   $L^2$-Betti number of $K$ is not zero. While the latter is guaranteed if $\chi(K)\neq 0$ by the Euler characteristic formula with $L^2$-Betti numbers~\cite{Luebook}, it is not known whether the converse is always true for aspherical manifolds. 
The Singer and Hopf conjectures~\cite{Sin} (see also~\cite{Luebook}) predict that it is always true, asserting that for a closed aspherical manifold $M$ only the middle-dimensional $L^2$-Betti number might survive and $(-1)^{\dim(M)/2}\chi(M)\geq0$ when $\dim(M)$ is even. Hence, at least at a theoretical level, assuming that some  $L^2$-Betti number is not zero  gives a more general statement for  Theorem \ref{t:mainmanifolds} than assuming the non-vanishing of the Euler characteristic.

However, we prefer to state our results using the Euler characteristic instead of $L^2$-Betti numbers, because we can offer evidence that condition (\ref{finiteness}) is always true for aspherical manifolds (more generally, for groups that fulfil cohomological finiteness conditions similar to those of Theorem \ref{t:maingroups}), whenever the Euler characteristic is not zero.

\begin{thm}\label{t:finitenessEuler-Lefschetz}
Let $K$ be a group of finite homological type, $\mathrm{cd}(K)<\infty$, and $\chi(K)\neq0$. Suppose $\theta\colon K\to K$ is an automorphism which is conjugate to $\theta^d$ in $\mathrm{Out}(K)$ for some $d>1$. Then there exists an integer $q_m=q(\chi(K),d,m)>1$ such that $\chi(K)=\Lambda(\theta^{q_m})$, where $\Lambda(\cdot)$ denotes the Lefschetz number.
 \end{thm}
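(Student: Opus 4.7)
The plan is to use the hypothesis to show that the induced map $H_i(\theta;\Q)$ of $\theta$ on each rational homology group has only roots of unity as eigenvalues. A suitable power $\theta^{q_m}$ will then act unipotently on every $H_i(K;\Q)$, making $\mathrm{tr}(H_i(\theta^{q_m};\Q)) = b_i(K;\Q)$, and the Hopf trace formula will yield $\Lambda(\theta^{q_m}) = \sum_i(-1)^i b_i(K;\Q) = \chi(K)$.

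I would begin by setting up the linear algebra. Since $K$ has finite homological type and $\mathrm{cd}(K)<\infty$, each $H_i(K;\Q)$ is finite-dimensional and vanishes outside a bounded range of degrees. Inner automorphisms act trivially on $H_*(K;\Q)$, so writing the hypothesis as $\theta^d = \xi\theta\xi^{-1}\iota$ for some $\iota\in\mathrm{Inn}(K)$ and applying $H_i(-;\Q)$ yields, inside $\mathrm{GL}(H_i(K;\Q))$, the identity
\[
H_i(\theta;\Q)^d \;=\; H_i(\theta^d;\Q) \;=\; H_i(\xi;\Q)\, H_i(\theta;\Q)\, H_i(\xi;\Q)^{-1}.
\]
Thus $H_i(\theta;\Q)^d$ is linearly conjugate to $H_i(\theta;\Q)$, so the two share the same finite multiset of eigenvalues $\Sigma_i\subset\overline{\Q}^{\times}$ (with $0\notin\Sigma_i$ since $\theta$ is an automorphism).

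The key step is then a finite-orbit argument: the self-map $\lambda\mapsto\lambda^d$ permutes $\Sigma_i$, and since $\Sigma_i$ is finite every $\lambda\in\Sigma_i$ is periodic under it, giving $\lambda^{d^a(d^b-1)} = 1$ for some $a\geq 0$, $b\geq 1$. Hence every eigenvalue of $H_i(\theta;\Q)$ is a root of unity. Let $N$ be the least common multiple of the orders of all such roots of unity, taken across the finitely many $i$ with $H_i(K;\Q)\neq 0$. For any $m\geq 1$, set $q_m := mN > 1$: then every eigenvalue of $H_i(\theta^{q_m};\Q) = H_i(\theta;\Q)^{q_m}$ equals $1$, so $\mathrm{tr}(H_i(\theta^{q_m};\Q)) = b_i(K;\Q)$ regardless of Jordan block structure, and summing with signs yields $\Lambda(\theta^{q_m}) = \chi(K)$.

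The main obstacle is the passage from conjugacy in $\mathrm{Out}(K)$ to honest linear conjugacy on rational homology, which relies on the standard fact that inner automorphisms act trivially on $H_*(K;\Q)$; this is what converts the cohomological finiteness hypotheses into a useful statement at the level of characteristic polynomials. Once this reduction is in place, everything collapses to the orbit-finiteness observation, and non-trivial Jordan blocks cause no trouble because the trace of a unipotent operator equals its dimension.
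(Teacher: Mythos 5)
Your proposal is correct, and it takes a genuinely different route from the paper. The paper never passes to the individual operators $H_i(\theta;\R)$: it works with the total Lefschetz numbers, decomposes $H_*(K;\R)$ into generalised eigenspaces, introduces the signed multiplicities $\chi_j(\theta)$ (the ``$\lambda_j$-Euler characteristics''), uses $\Lambda(\theta^{\mu d^m})=\Lambda(\theta^{\mu})$ for all $\mu$, and then invokes the fact that power sums determine a finite multiset (Newton's identities) to conclude that only the \emph{essential} eigenvalues are roots of unity, which already forces $\Lambda(\theta^{q_m})=\sum_j\chi_j=\chi(K)$. You instead use the same key input (inner automorphisms act trivially on $H_*(K;\Q)$, so conjugacy in $\mathrm{Out}(K)$ gives honest linear conjugacy $H_i(\theta)^d\sim H_i(\theta)$ in each degree) to run a finite-orbit argument on each eigenvalue multiset separately; this avoids the symmetric-function bookkeeping entirely and yields a stronger intermediate conclusion, namely that \emph{all} eigenvalues of every $H_i(\theta;\Q)$ are roots of unity, so that $\theta^{q}$ acts unipotently on all of $H_*(K;\Q)$ and $\Lambda(\theta^{lq})=\chi(K)$ for every $l\geq1$ (the non-vanishing of $\chi(K)$ is not even needed for your argument, while it is essential for the paper's, since there the signed multiplicities $\chi_j$ could otherwise all cancel). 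Two small points: your formula $q_m:=mN$ can equal $1$ when $m=N=1$, so take any multiple of $N$ greater than $1$ (e.g. $(m+1)N$), which costs nothing since unipotence persists under all powers; and note that the paper's displayed definition of $\Lambda$ starts the sum at $i=1$, whereas both your argument and the paper's own proof implicitly include the $i=0$ term, as they must for $\chi(K)=\Lambda(\theta^{q_m})$ to hold.
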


As we shall see in Example \ref{ex:positiveEuler-infiniteorder1}, the above theorem does not suffice to draw any conclusion on the finiteness of the order of $\theta$ in $\mathrm{Out}(K)$, and what fails in Example \ref{ex:positiveEuler-infiniteorder1} is the finite generation of $H^n(K;\Z K)$. We suspect that for a Poincar\'e Duality group $K$, the Euler characteristic of the fixed point subgroup of some conjugate of $\theta^{q_m}$ will be equal to $\Lambda(\theta^{q_m})$. Then, the same argument as for (part of) the proof of
Theorem \ref{t:co-Hopf} will imply the finiteness of the order of $\theta$ in $\mathrm{Out}(K)$, establishing condition (\ref{finiteness}) for $K$.

\subsection{A remark about hyperbolicity}
In~\cite{Gro}, Gromov reproved Mostow's rigidity~\cite{Mos} in every dimension greater than two, by showing and using the non-vanishing of the simplicial volume for hyperbolic manifolds. Mostow's rigidity implies that the fundamental groups of hyperbolic manifolds of dimensions greater than two have finite outer automorphism groups, which means that the corresponding $E_h$ are virtually trivial. In particular, mapping tori of hyperbolic manifolds of dimension greater than two have always vanishing simplicial volume.  The same is true for any mapping torus of a closed aspherical manifold of dimension at least three with hyperbolic fundamental group, by work of Paulin~\cite{Pau}, Bestvina and Feighn~\cite{BF} and Bowditch~\cite{Bow}. This indicates that, indeed, the hyperbolicity of $\pi_1(\Sigma)$ in Theorem \ref{t:surfacebundles} is not the key condition. Furthermore, for a group extension
\begin{equation}\label{eq:fiber}
1\longrightarrow K\longrightarrow\Gamma_\theta\longrightarrow\Z\longrightarrow1,
\end{equation}
the group $\Gamma_\theta$ being hyperbolic implies severe restrictions on the normal subgroup $K$. It is a folklore conjecture that there is no closed hyperbolic manifold of (obviously odd) dimension greater than three that fibers over the circle and, thus, its fundamental group has the form $\Gamma_\theta$ as in (\ref{eq:fiber}).
Hence, it is evident that nor the hyperbolicity of $\Gamma_\theta$ is a property that can be used for what we wish to do.  
From a purely group theoretic point of view, our results suggest to think of higher dimensional analogues of hyperbolic 3-manifold groups as
not being hyperbolic groups themselves, but rather members of a class of groups which have a finite index subgroup $\Gamma_\theta$ given by (\ref{eq:fiber}), where $K$ is torsion-free, residually finite with $\chi(K)\neq0$, and $\theta$ has infinite order in $\mathrm{Out}(K)$. Every hyperbolic 3-manifold group fits into this context being virtually fibered~\cite{Ago,Ago2}. From a geometric point of view, the above 
rigidity applications suggest an abundance of examples of non-positively curved manifolds $F$ which fit into the context of Theorem \ref{t:mainmanifolds}, e.g.,  {\em higher dimensional graph manifolds} studied in the monograph of Frigerio, Lafont and Sisto~\cite{FLS}. And, clearly, taking products whose factors consist, for instance, of such higher dimensional graph manifolds and of aspherical manifolds with hyperbolic fundamental groups will produce more examples. This goes beyond aspherical manifolds, as already indicated by the case of open maps between CW-complexes (Section \ref{ss:quasiregular}), or by considering, for example, connected sums and mixed products of aspherical manifold groups with free groups (Section \ref{s:non-aspherical}).

\subsection*{Acknowledgments}
I would like to thank Jean-Fran\c cois Lafont for useful discussions, Dennis Sullivan for his comments and encouragement, and an anonymous referee for the constructive feedback.

\section{Residual finiteness, injectivity, and the role of center}

\subsection{The structure of $\Gamma_\theta$}

Given a finite CW-complex  $X$ and a homeomorphism $h\colon X\to X$, the {\em mapping torus} of $h$ is defined to be
$$ E_h=X\rtimes_h S^1:=(X\times [0,1])/((x,0)\sim (h(x),1)).$$
$E_h$ fibers over the circle with natural projection $p\colon E_h\to S^1$ and fiber $X$. The  fundamental group of $E_h$ has a presentation 
\[
\pi_1(E)=\langle \pi_1(X), t \ | \ t\gamma t^{-1}=h_*(\gamma), \ \forall \gamma\in\pi_1(X)\rangle,
\]
where a chosen generator $t\in\mathbb{Z}$ acts on $\pi_1(X)$ by $\gamma\mapsto h_*(\gamma)=t\gamma t^{-1}, \ \gamma\in\pi_1(X)$, and there is a short exact sequence (induced by the fibration)
\[
1\longrightarrow\pi_1(X)\longrightarrow\pi_1(E_h)\overset{p_*}\longrightarrow\Z\longrightarrow1,
\]
where $p_*$ maps $\pi_1(X)$ trivially and $p_*(t)=1$. In other words, $\pi_1(E_h)$ is the semi-direct product
$$\pi_1(E_h)=\pi_1(X)\rtimes_{h_*}\mathbb{Z}.$$

At the purely group theoretic level, the same construction applies as a special case of HNN extensions (as can be done for a pair of topological spaces): If $K$ is a group and $\theta\colon K\to K$ is an automorphism, then we call
\[
\Gamma_\theta=K*_\theta=K\rtimes_\theta\Z:=\langle K,t \ | \ t\alpha t^{-1}=\theta(\alpha), \ \forall\alpha\in K\rangle
\]
the {\em mapping torus} of $K$ with respect to $\theta$. If $K$ is finitely generated, then we say that $\Gamma_\theta$ {\em fibers}. In other words, the 
short exact sequence
\[
1\longrightarrow K\longrightarrow\Gamma_\theta\overset{\pi}\longrightarrow\Z\longrightarrow1,
\]
has finitely generated kernel $K$. As above, $\pi\colon\Gamma_\theta\to\Z$ denotes the projection to the chosen generator of $\Z$. Moreover, we can naturally identify $\Z$ with the quotient group $\Gamma_\theta/K$.

We gather some elementary observations in the following lemma.

\begin{lem}\label{l:observations}
Let $\Gamma_\theta=
K\rtimes_\theta\langle t\rangle$ be a mapping torus. The following hold:
\begin{itemize}
\item[(1)] Every element $x\in\Gamma_\theta$ has the form $x=\beta t^m$, for some $\beta\in K$ and $m\in\Z$.
\item[(2)] If $\Delta$ is a subgroup of $\Gamma_\theta$ of finite index, then $\Delta\cong(K\cap\Delta)\rtimes_{\eta}\Z,$ where $\eta=\theta^{l}|_{K\cap\Delta}$ for some $l\neq0$. If, moreover, $K$ is finitely generated, then $K\cap\Delta$ is finitely generated and so $\Delta$ fibers.
\item[(3)] More generally, if $\Delta$ is a subgroup of $\Gamma_\theta$ such that $\Delta\cap\langle t\rangle\neq1$, then $\Delta\cong(K\cap\Delta)\rtimes_{\eta}\Z$, where $\eta=\theta^{l}|_{K\cap\Delta}$ for some $l\neq0$.
\end{itemize}
\end{lem}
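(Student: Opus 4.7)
The plan for (1) is to invoke the HNN-extension presentation of $\Gamma_\theta$ directly: every element is a word in $K \cup \{t, t^{-1}\}$, and the defining relation $t\alpha = \theta(\alpha)\, t$ (together with its inverse $t^{-1}\alpha = \theta^{-1}(\alpha)\, t^{-1}$) allows one to inductively push every occurrence of $t^{\pm 1}$ to the right-hand end of any word, collapsing it into the normal form $\beta t^m$ with $\beta\in K$ and $m\in\Z$.

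For (2), I would apply the quotient map $\pi\colon \Gamma_\theta \to \Z$ to $\Delta$. Since $[\Gamma_\theta:\Delta]<\infty$, the image $\pi(\Delta)$ is a finite-index subgroup of $\Z$, hence equals $l\Z$ for some $l\neq 0$; the kernel of $\pi|_\Delta$ is $K\cap\Delta$, giving the extension
\[
1 \longrightarrow K \cap \Delta \longrightarrow \Delta \longrightarrow l\Z \longrightarrow 1,
\]
which splits because $l\Z\cong\Z$ is free. Choosing a lift $x\in\Delta$ with $\pi(x)=l$, part (1) writes $x=\beta t^l$ for some $\beta\in K$, so that $\Delta=(K\cap\Delta)\rtimes\langle x\rangle$, with conjugation by $x$ supplying the monodromy $\eta$. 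For the finite-generation addendum, $K\cap\Delta$ has index at most $[\Gamma_\theta:\Delta]$ in $K$, so finite generation of $K$ passes to $K\cap\Delta$ by Nielsen--Schreier, and hence to $\Delta$ (generated by $K\cap\Delta$ together with the single extra element $x$).

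For (3), the hypothesis $\Delta\cap\langle t\rangle\neq 1$ selects a smallest integer $l>0$ with $t^l\in\Delta$. Because $t^l\in\Delta$ it normalizes $\Delta$, and it always normalizes $K$, so it normalizes $K\cap\Delta$; consequently $\theta^l$ restricts to a genuine automorphism of $K\cap\Delta$. The splitting argument of (2) now applies with the canonical choice of lift $x=t^l$ itself, giving the desired decomposition with $\eta=\theta^l|_{K\cap\Delta}$ literally.

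The delicate point sits inside (2): the lift $x=\beta t^l$ need not be $t^l$ itself, so its conjugation action on $K\cap\Delta$ is \emph{a priori} $c_\beta\circ\theta^l|_{K\cap\Delta}$ rather than $\theta^l|_{K\cap\Delta}$ on the nose. Reconciling this with the stated form of the lemma is the only non-routine bookkeeping: one either absorbs $c_\beta$ by reparametrising the splitting or passes to an isomorphic semi-direct product, using that the two actions differ only by an inner twist coming from an element of $K$. Part (3) avoids this issue altogether precisely because $t^l$ is already a lift lying inside $\Delta$.
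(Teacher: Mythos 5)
Your route for (1) and (2) is exactly the paper's: the normal form comes from the relation $t\alpha t^{-1}=\theta(\alpha)$, and (2) is proved by applying the projection $\pi\colon\Gamma_\theta\to\Z$ to $\Delta$, identifying the kernel $K\cap\Delta$ and the image $l\Z$, splitting because the image is free, and using $[K:K\cap\Delta]\leq[\Gamma_\theta:\Delta]$ for the finite-generation addendum (the fact you want there is Schreier's lemma -- a finite-index subgroup of a finitely generated group is finitely generated -- not Nielsen--Schreier, which is about free groups; a naming slip only). The ``delicate point'' you flag in (2) is genuine: the monodromy produced by a lift $\beta t^{l}$ is conjugation by $\beta$ composed with $\theta^{l}$, restricted to $K\cap\Delta$, and the paper writes $\eta=\theta^{l}|_{K\cap\Delta}$ without comment. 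In the places where the identification of $\eta$ is actually used (Lemma \ref{l:center} and Theorem \ref{t:center-finitecoHopf}) the kernel turns out to be all of $K$, so the discrepancy is an inner automorphism of $K$ and the two semidirect products agree; your ``up to an inner twist'' reading is the right way to square the statement with the proof.

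The genuine error is in your part (3). You claim the twist is avoided because $t^{l}$ itself, with $t^{l}$ generating $\Delta\cap\langle t\rangle$, can serve as the lift. But that $l$ need not generate $\pi(\Delta)$: take $K=\langle a\rangle\cong\Z$, $\theta=\id$ (so $\Gamma_\theta=\Z^{2}$) and $\Delta=\langle at,\,a^{2}\rangle$. Then $K\cap\Delta=\langle a^{2}\rangle$ and $\Delta\cap\langle t\rangle=\langle t^{2}\rangle\neq1$, yet $\pi(\Delta)=\Z$, and the subgroup $\langle K\cap\Delta,\,t^{2}\rangle=\langle a^{2},t^{2}\rangle$ has index $2$ in $\Delta$; your construction decomposes a proper subgroup rather than $\Delta$ (note this $\Delta$ even has finite index, so the problem is not exotic). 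The hypothesis $\Delta\cap\langle t\rangle\neq1$ serves only to guarantee $\pi(\Delta)\neq0$; after that, (3) must be proved exactly as (2), by choosing a generator $l'$ of $\pi(\Delta)$ and a lift $\beta t^{l'}\in\Delta$ -- which is what the paper means by ``similar to (2), without the assumption on the index'' -- and it therefore inherits the same inner-twist bookkeeping rather than escaping it.
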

\begin{proof} \
\begin{itemize}
\item[(1)] This follows immediately by the relation $t\alpha t^{-1}=\theta(\alpha)$ for all $\alpha\in K$.
\item[(2)] Since $[\Gamma_\theta:\Delta]<\infty$, we deduce that $\pi(\Delta)$ has finite index, say $l$, in $\Gamma_\theta/K$. The kernel of the composition
\[
\Delta\longhookrightarrow\Gamma_\theta\overset{\pi}\longrightarrow\Z
\]
is equal to $K\cap\Delta$, which is a finite index subgroup of $K$ because 
\[
[K:K\cap\Delta]\leq[\Gamma_\theta:\Delta]<\infty.
\]
Thus, if $K$ is finitely generated, then $K\cap\Delta$ is finitely generated. The above argument is given in the following commutative diagram.
\begin{equation*}\label{eq:subgroup}
\xymatrix{
 & K\cap\Delta\ar[r]^{}\ar[d]^{}&\Delta\ar[d]^{}\ar[r]& \Z\cong\pi(\Delta)\cap\Z \ar[d]^{}
\\ 
 & K\ar[r]^{}&\Gamma_\theta \ar[r]^{\pi}& \Z 
\\ }
\end{equation*}
\item[(3)] Since $\Delta\cap\langle t\rangle\neq1$, the proof is similar to (2), without the assumption on the index.
\end{itemize}
\end{proof}

\subsection{Hirshon's generalisation of Mal'cev's theorem}

\begin{defn}
A group $\Gamma$ is said to be {\em Hopfian} (or to have the {\em Hopf property}) if every surjective endomorphism of $\Gamma$ is an automorphism. It is called {\em residually finite} if for any non-trivial element $\gamma\in\Gamma$ there is a finite group $G$ and a homomorphism $\varrho\colon\Gamma\to G$ such that $\varrho(\gamma)\neq1$. Equivalently, $\Gamma$ is residually finite if the intersection of all its (normal) subgroups of finite index is trivial.
\end{defn}

A well-known theorem of Mal'cev~\cite{Mal} establishes the following connection between the two properties. 

\begin{thm}[Mal'cev]\label{t:Malcev}
Every finitely generated residually finite group is Hopfian.
\end{thm}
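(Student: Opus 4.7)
The plan is to prove the contrapositive at the level of kernels: assume $\phi\colon\Gamma\to\Gamma$ is surjective, pick a hypothetical nontrivial element in $\ker\phi$, and use residual finiteness plus the finiteness of finite-index subgroups to derive a contradiction.

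First I would exploit finite generation to observe that for each integer $n\geq1$ there are only finitely many subgroups of $\Gamma$ of index $n$. This is because any such subgroup is the stabiliser of a point for a transitive action of $\Gamma$ on the set $\{1,\dots,n\}$, and a finitely generated group admits only finitely many homomorphisms into the finite symmetric group $S_n$. Consequently the intersection
\[
K_n := \bigcap_{[\Gamma:H]=n} H
\]
is a characteristic subgroup of finite index in $\Gamma$.

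Next I would show that any surjective endomorphism $\phi$ satisfies $\phi^{-1}(K_n)=K_n$. If $H\leq\Gamma$ has index $n$, then surjectivity of $\phi$ gives a bijection $\Gamma/\phi^{-1}(H)\to\Gamma/H$, so $\phi^{-1}(H)$ again has index $n$. Thus $\phi^{-1}$ permutes the (finite) set of index-$n$ subgroups, and intersecting yields $\phi^{-1}(K_n)=K_n$. In particular $\ker\phi\subseteq K_n$ for every $n$.

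Finally I would combine this with residual finiteness. If $\phi$ were not injective, pick $1\neq g\in\ker\phi$. By residual finiteness there exists a finite-index normal subgroup $N\trianglelefteq\Gamma$ with $g\notin N$; let $n=[\Gamma:N]$. Then $K_n\subseteq N$ by construction, whence $g\notin K_n$, contradicting $\ker\phi\subseteq K_n$. Therefore $\phi$ is injective, and being surjective it is an automorphism.

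There is no real obstacle: both ingredients (finitely many index-$n$ subgroups in a finitely generated group, and the elementary fact that $\phi^{-1}$ permutes them when $\phi$ is onto) are standard, and the residual finiteness hypothesis is used in the most direct way possible to separate a putative kernel element from a characteristic finite-index subgroup. The only point deserving care is the verification that $\phi^{-1}(H)$ retains index $n$, which requires surjectivity and not merely injectivity of $\phi$; this is precisely the asymmetry that makes Hopfianness, rather than co-Hopfianness, the conclusion of the theorem.
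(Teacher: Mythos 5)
Your proof is correct and complete: the finiteness of the set of index-$n$ subgroups of a finitely generated group, the fact that $\phi^{-1}$ permutes this set when $\phi$ is onto (so $\ker\phi\subseteq K_n$ for all $n$), and the separation of a putative kernel element by a finite-index subgroup together give exactly the classical argument of Mal'cev. The paper does not prove this theorem at all --- it quotes it with a reference to Mal'cev's original article --- so there is nothing to compare; your write-up is the standard proof and is valid as stated.
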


Following a similar line of ideas, Hirshon~\cite{Hir} proved the following interesting generalisation of Theorem \ref{t:Malcev} (we state only the torsion-free case). 

\begin{thm}[Hirshon]\label{t:Hirshon}
Let $\Gamma$ be a finitely generated, torsion-free, residually finite group. If $\varphi$ is an endomorphism of $\Gamma$ such that $[\Gamma:\varphi(\Gamma)]<\infty$, then $\varphi$ is injective.
\end{thm}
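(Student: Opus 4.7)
I would follow Mal'cev's proof of Theorem~\ref{t:Malcev}, replacing ``surjective'' by ``finite-index image'' through a pigeonhole on preimages; torsion-freeness is invoked only at the very end to upgrade ``$\ker(\varphi)$ finite'' to ``$\ker(\varphi)=1$''.

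Assume for contradiction that $1\neq x\in\ker(\varphi)$; by residual finiteness of $\Gamma$ pick a normal finite-index subgroup $N\trianglelefteq\Gamma$ with $x\notin N$, say of index $k$. The key elementary observation is that for any endomorphism $\sigma$ of $\Gamma$ and any subgroup $H\leq\Gamma$, the map $g\mapsto\sigma(g)H$ induces an injection $\Gamma/\sigma^{-1}(H)\hookrightarrow\Gamma/H$, whence $[\Gamma:\sigma^{-1}(H)]\leq[\Gamma:H]$. Applied to $\sigma=\varphi^n$ and $H=N$, every preimage $\varphi^{-n}(N)$ is normal of index at most $k$ in $\Gamma$. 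Because $\Gamma$ is finitely generated, it contains only finitely many subgroups of a given finite index, so $\{\varphi^{-n}(N)\}_{n\geq 0}$ takes only finitely many values, and $V:=\bigcap_{n\geq 0}\varphi^{-n}(N)$ is a finite intersection---hence a normal, finite-index, $\varphi$-invariant subgroup of $\Gamma$ contained in $N$. In particular $x\notin V$.

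Writing $Q:=\Gamma/V$ and $\overline{\varphi}$ for the induced endomorphism of the finite group $Q$, the image $\overline{x}$ is nontrivial and satisfies $\overline{\varphi}(\overline{x})=1$. If $\overline{\varphi}$ is bijective on $Q$ (or at least on a subgroup containing $\overline{x}$) we are done: $\overline{\varphi}(\overline{x})=\overline{\varphi}(1)$ together with injectivity yields $\overline{x}=1$, a contradiction. The strategy is to iterate $\varphi$ and pass to the stable image: since $Q$ is finite, the descending chain $\overline{\varphi}^n(Q)$ stabilises to some subgroup $I\leq Q$ on which $\overline{\varphi}$ acts as a surjective, hence bijective, endomorphism. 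The finiteness of $[\Gamma:\varphi(\Gamma)]$ enters via the estimate $[\Gamma:\varphi^m(\Gamma)]\leq[\Gamma:\varphi(\Gamma)]^m$ (coming from the same injection argument iterated), which keeps the subgroups $\varphi^m(\Gamma)$ of finite index, so that Mal'cev's Theorem~\ref{t:Malcev} can be applied to the finitely generated residually finite group $\varphi^m(\Gamma)$ and to $\varphi$ restricted to the stable part to conclude $\ker(\varphi)\cap\varphi^m(\Gamma)=1$ for large $m$. Consequently, $\ker(\varphi)$ embeds into the finite group $\Gamma/\varphi^m(\Gamma)$, so is itself finite; torsion-freeness of $\Gamma$ then forces $\ker(\varphi)=1$.

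I expect the main obstacle to be precisely this linking step: manipulating the pigeonhole/stabilisation so that Mal'cev's theorem is invoked on an honest surjective endomorphism of a finitely generated residually finite subgroup, rather than on $\varphi$ itself (which is not assumed surjective). Once that is set up cleanly, the rest of the argument---passage to the quotient $Q$, use of finiteness of subgroups of bounded index, and the final appeal to torsion-freeness---is formal bookkeeping.
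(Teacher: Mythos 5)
The paper does not prove Theorem \ref{t:Hirshon} at all: it is quoted from Hirshon's paper \cite{Hir}, so there is no internal argument to compare yours with; your proposal has to stand on its own, and it does not yet.

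The final bookkeeping is fine: if $\ker\varphi\cap\varphi^m(\Gamma)=1$ for some $m$, then distinct elements of $\ker\varphi$ lie in distinct cosets of $\varphi^m(\Gamma)$ (note that $\Gamma/\varphi^m(\Gamma)$ is in general only a finite coset space, not a group, since $\varphi^m(\Gamma)$ need not be normal), so $\ker\varphi$ is finite and torsion-freeness kills it. The gap is that the claim $\ker\varphi\cap\varphi^m(\Gamma)=1$ \emph{is} essentially the content of Hirshon's theorem (it amounts to the stabilisation of the chain $\ker\varphi^n$), and neither mechanism you propose delivers it. Mal'cev's Theorem \ref{t:Malcev} cannot be applied to $\varphi|_{\varphi^m(\Gamma)}$: its image is $\varphi^{m+1}(\Gamma)$, which has finite index but is in general a \emph{proper} subgroup of $\varphi^m(\Gamma)$, because the chain $\varphi^m(\Gamma)$ need not stabilise (already for $\Gamma=\Z$ and $\varphi(n)=2n$ it strictly descends forever). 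So the restriction is not a surjective endomorphism; it is exactly an instance of the situation of Theorem \ref{t:Hirshon} again, and invoking Mal'cev (or Hirshon) there is circular. Likewise, the stable image $I=\overline\varphi^{\,m}(Q)$ exists only in the finite quotient $Q=\Gamma/V$, and you give no reason why $\overline{x}$ should lie in $I$ (equivalently, why $x\in\varphi^m(\Gamma)V$), which is what the bijectivity of $\overline\varphi|_I$ would need in order to force $\overline{x}=1$.

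Note also that everything before that step (the subgroup $N$, the preimages $\varphi^{-n}(N)$, $V$, $Q$, the stabilised image in $Q$) makes no essential use of the hypothesis $[\Gamma:\varphi(\Gamma)]<\infty$, and the statement is false without it: $\Gamma=F_2\times\Z$ is finitely generated, torsion-free and residually finite, yet $\varphi(w,n)=(1,n)$ is a non-injective endomorphism (with infinite-index image). So any correct completion must bring the finite-index hypothesis to bear precisely at the step you left open; that step is where Hirshon's actual argument does its work, and it is not formal bookkeeping.
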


Since finitely generated residually finite-by-cyclic groups (more generally, split extensions of finitely generated residually finite-by-residually finite groups) are residually finite~\cite[Ch. III, Theorem 7]{Mil}, we derive the following corollary.

\begin{cor}\label{c:residuallyfiniteinjection}
If $K$ is a finitely generated, torsion-free, residually finite group, then every endomorphism of $\Gamma_\theta=K\rtimes_\theta\Z$ onto a finite index subgroup is injective.
\end{cor}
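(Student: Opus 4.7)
The plan is to verify the three hypotheses of Theorem~\ref{t:Hirshon} (Hirshon) for $\Gamma_\theta$ and then invoke it directly. The properties to check are that $\Gamma_\theta$ is finitely generated, torsion-free, and residually finite; once these are in place, any endomorphism of $\Gamma_\theta$ with finite-index image is automatically injective by Hirshon's theorem.

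First, $\Gamma_\theta$ is finitely generated, since it is generated by any finite generating set of $K$ together with the stable letter $t$. Second, $\Gamma_\theta$ is torsion-free: by Lemma~\ref{l:observations}(1), every element has the form $x = \beta t^m$ with $\beta \in K$ and $m \in \Z$, and under the projection $\pi \colon \Gamma_\theta \to \Z$ one has $\pi(x) = m$. If $x^k = 1$ for some $k > 0$, then $km = \pi(x^k) = 0$, so $m = 0$ and $x \in K$; since $K$ is torsion-free by assumption, this forces $x = 1$. Third, $\Gamma_\theta$ is residually finite because it is a split extension of a finitely generated residually finite group by $\Z$, which is the content of the theorem from \cite[Ch.~III, Theorem~7]{Mil} cited just above the statement.

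Having verified all three hypotheses, Hirshon's theorem applies to any endomorphism $\varphi \colon \Gamma_\theta \to \Gamma_\theta$ with $[\Gamma_\theta : \varphi(\Gamma_\theta)] < \infty$, giving injectivity. There is no real obstacle here: this is a direct specialization of Theorem~\ref{t:Hirshon}, the only substantive input being the residual finiteness of split extensions of finitely generated residually finite groups by $\Z$, which is quoted from \cite{Mil}. It is worth emphasising, for later use in the paper, that no residual finiteness assumption on the whole of $\Gamma_\theta$ need be imposed as a hypothesis: it follows automatically from that of $K$.
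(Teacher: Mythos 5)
Your proposal is correct and follows exactly the paper's argument: torsion-freeness of $\Gamma_\theta$ is immediate (any torsion element maps to $0$ under $\pi$, hence lies in the torsion-free $K$), residual finiteness comes from the cited result of Miller on split extensions, and then Hirshon's theorem gives injectivity. The only difference is that you spell out the routine verifications which the paper compresses into one line.
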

\begin{proof}
Clearly $\Gamma_\theta$ is torsion-free and the corollary follows by Theorem \ref{t:Hirshon}.
\end{proof}

\subsection{Center and virtually trivial fibrations}

The following lemma characterises $\Gamma_\theta$ when the center $C(K)$ of $K$ is trivial. Note that $K$ need not be residually finite.

\begin{lem}\label{l:center}
For a mapping torus $\Gamma_\theta=K\rtimes_\theta\Z$, where  $C(K)=1$, the following conditions are equivalent:
\begin{itemize}
\item[(i)] $C(\Gamma_\theta)\neq1$;
\item[(ii)] there exists $m\neq 0$ such that $\theta^m\in\mathrm{Inn}(K)$;
\item[(iii)] $\Gamma_\theta$ is virtually $K\times\Z$.
\end{itemize}
\end{lem}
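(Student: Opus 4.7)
The plan is to run the circular chain (i)$\Rightarrow$(ii)$\Rightarrow$(iii)$\Rightarrow$(i). The first two implications are direct algebraic computations inside $\Gamma_\theta$; the third uses a centralizer/product-of-conjugates argument that exploits the fact that the $t$-exponent is preserved under conjugation.

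For (i)$\Leftrightarrow$(ii), I would write an arbitrary element as $z=\alpha t^m$ via Lemma \ref{l:observations}(1) and observe that the centrality conditions $z\beta=\beta z$ for all $\beta\in K$ and $zt=tz$ translate respectively to $\theta^m=c_{\alpha^{-1}}$ on all of $K$ and $\theta(\alpha)=\alpha$. Given either condition the other follows from $C(K)=\{1\}$: comparing the two expansions of $\theta^{m+1}$ as $\theta\cdot\theta^m$ and $\theta^m\cdot\theta$ forces $c_{\theta(\alpha)\alpha^{-1}}=\mathrm{id}$, hence $\theta(\alpha)\alpha^{-1}\in C(K)=\{1\}$. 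Non-triviality of $z$ then rules out $m=0$ (which would force $\alpha\in C(K)=\{1\}$), establishing both directions simultaneously.

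For (ii)$\Rightarrow$(iii), setting $s:=t^m\alpha$, a direct computation shows $s\beta s^{-1}=\beta$ for every $\beta\in K$ (using $\theta^m=c_{\alpha^{-1}}$) and $sts^{-1}=t$ (using $\theta(\alpha)=\alpha$), so $s\in C(\Gamma_\theta)$. An induction gives $s^k=t^{mk}\alpha^k$, so $\langle s\rangle\cap K=\{1\}$ and hence $\langle K,s\rangle=K\times\langle s\rangle\cong K\times\Z$ is a subgroup of $\Gamma_\theta$; the cosets $\{t^r\langle K,s\rangle:0\le r<|m|\}$ exhaust $\Gamma_\theta$, so this subgroup has index $|m|$ and $\Gamma_\theta$ is virtually $K\times\Z$.

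For (iii)$\Rightarrow$(i), let $\Delta\le\Gamma_\theta$ be a finite-index subgroup with $\Delta\cong K\times\Z$ and let $\Delta_0$ be its normal core in $\Gamma_\theta$. Since $C(K)=\{1\}$, $C(\Delta)\cong\Z$ is infinite cyclic; pick an infinite-order element $z\in C(\Delta)\cap\Delta_0$. Then $z\in C(\Delta_0)$, and for every $g\in\Gamma_\theta$ the conjugate $gzg^{-1}$ lies in $\Delta_0$ (by normality) and centralizes $\Delta_0=g\Delta_0g^{-1}$, so $gzg^{-1}\in C(\Delta_0)$. Because $\Delta\le C_{\Gamma_\theta}(z)$, the conjugacy class of $z$ in $\Gamma_\theta$ is a finite set $\{z_1,\dots,z_n\}\subseteq C(\Delta_0)$, whose elements pairwise commute. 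Their product $w:=z_1\cdots z_n$ is $\Gamma_\theta$-conjugation-invariant (permutation of factors in an abelian group), hence $w\in C(\Gamma_\theta)$. Writing $z=\alpha t^m$, the formula $gzg^{-1}=\beta\theta^k(\alpha)\theta^m(\beta^{-1})\,t^m$ (for $g=\beta t^k$) shows every $z_i=\gamma_i t^m$, and multiplying out gives
\[
w=\bigl(\gamma_1\,\theta^m(\gamma_2)\,\theta^{2m}(\gamma_3)\cdots\theta^{(n-1)m}(\gamma_n)\bigr)\,t^{nm}.
\]
When $m\ne 0$ one has $nm\ne 0$, so $w\ne 1$ and (i) holds. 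In the residual case $m=0$, i.e.\ $C(\Delta)\subseteq K\cap\Delta$, this argument degenerates to $w\in K\cap C(\Gamma_\theta)\subseteq C(K)=\{1\}$; here one instead uses Lemma \ref{l:observations}(2) to write $\Delta=K'\rtimes_{\theta^l|_{K'}}\langle t^l\rangle$ with $K'=K\cap\Delta$ of finite index in $K$, compares this with the abstract decomposition $\Delta\cong K\times\Z$ (whose center is the second factor, so $K'$ must be isomorphic to $K_1\times\Z$ for some finite-index $K_1\le K$), and invokes $C(K)=\{1\}$ to force $\theta^l|_{K'}$ to be inner and thereby extract an inner power of $\theta$, giving (ii) and hence (i) via step one.

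The main obstacle is precisely this residual $m=0$ case: the clean product-of-conjugates argument naturally fails there, and one must instead carry out a structural comparison between the two semidirect-product decompositions of $\Delta$ to ensure that no non-trivial central subgroup can be hidden inside $K$ unless a power of $\theta$ is already inner. The hypothesis $C(K)=\{1\}$ plays a double role: it pins $C(\Delta)\cong\Z$ down to a well-defined cyclic generator, and it prevents the structural comparison in the $m=0$ case from producing spurious central elements lying inside $K$.
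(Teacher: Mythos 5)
Your (i)$\Leftrightarrow$(ii) and (ii)$\Rightarrow$(iii) are correct and in substance identical to the paper's: the paper extracts $\theta^m(\alpha)=\beta^{-1}\alpha\beta$ from a central element $\beta t^m$ exactly as you do, obtains $\theta(\beta)=\beta$ from $C(K)=1$ by the same comparison of $\theta\circ\theta^m$ with $\theta^m\circ\theta$ (it performs this computation inside its proof of (iii)$\Rightarrow$(i)), and for (ii)$\Rightarrow$(iii) it identifies the index-$m$ subgroup generated by $K$ and $t^m$, namely $\Gamma_{\theta^m}$, with $K\times\Z$ via an explicit isomorphism; your central element $s=t^m\alpha$ is the same device in different packaging.

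The problem is (iii)$\Rightarrow$(i). You read (iii) as ``some finite-index subgroup $\Delta$ is \emph{abstractly} isomorphic to $K\times\Z$'', and your normal-core/product-of-commuting-conjugates argument is a genuinely nice proof \emph{when} the chosen $z\in C(\Delta)\cap\Delta_0$ has nonzero $t$-exponent. But your residual case $C(\Delta)\subseteq K\cap\Delta=K'$ is not actually closed, and the sketch contains a false step: transporting $K'$ through an isomorphism $\Delta\cong K\times\Z$ in this case shows that $K'$ is the kernel of a surjection $K\times\Z\to\Z$ killing the center, hence $K'\cong K_0\times\Z$ where $K_0$ is the kernel of a surjection $K\to\Z$ --- of \emph{infinite} index in $K$ (indeed $\Delta/K'\cong\Z$), not the finite-index $K_1$ you assert. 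Moreover, even granting such a decomposition, ``force $\theta^l|_{K'}$ to be inner and thereby extract an inner power of $\theta$'' is unjustified: innerness of a restriction to a finite-index subgroup, with conjugator only known inside $K'$, does not globalize to $\theta^l\in\mathrm{Inn}(K)$ without further argument. The paper sidesteps all of this by reading (iii) \emph{internally} (this is explicit in its proof: $K\subset\Delta$ and $\Delta=K\times\Z$ as an internal product with the canonical fiber $K$), which is also exactly what (ii)$\Rightarrow$(iii) produces and what is used later in Theorem \ref{t:Hopf-type}. Under that reading the residual case cannot occur: the direct factor $Z$ centralizes $K$, so $Z\cap K\subseteq C(K)=1$, forcing $Z=\langle\beta t^{\pm d}\rangle$ with $d=[\Gamma_\theta:\Delta]$ (via Lemma \ref{l:observations}(2)); commutation with $K$ gives $\theta^{d}(\alpha)=\beta^{-1}\alpha\beta$ on all of $K$, and your own computation $\theta(\beta)=\beta$ then yields the central element $\beta t^{d}$. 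So either adopt the internal reading (in which case your conjugate-product machinery is unnecessary), or, if you insist on the abstract reading, the $m=0$ branch needs a real proof --- as written it is a gap.
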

\begin{proof}
(i)$\Rightarrow$(ii) Let $x\in C(\Gamma_\theta)$. By Lemma \ref{l:observations}(1) and the assumption that $K$ has trivial center, $x=\beta t^m$ for some $\beta\in K$ and some non-zero integer $m$. Then $(\beta t^m)\alpha=\alpha(\beta t^m)$ for all $\alpha\in K$, that is, $\theta^m(\alpha)=\beta^{-1}\alpha\beta$. Hence, $\theta^m\in\mathrm{Inn}(K)$.

(ii)$\Rightarrow$(iii) Suppose $\theta^m\in\mathrm{Inn}(K)$ for some $m\neq0$, i.e. there is a $\beta\in K$ such that $\theta^m(\alpha)=\beta^{-1}\alpha\beta$ for all $\alpha\in K$. Let  $\Gamma_{\theta^m}$ be the index $m$ subgroup of $\Gamma_\theta$ given by $K\rtimes_{\theta^m}\Z$. A presentation for $\Gamma_{\theta^m}$ is given by
\[
\Gamma_{\theta^m}=\langle K,s \ | \ s\alpha s^{-1}=\theta^m(\alpha)=t^m\alpha t^{-m}, \ \forall\alpha\in K\rangle.
\]
We define 
\begin{equation*}\begin{array}{rcll}\label{eq:iso}
\Psi\colon\Gamma_{\theta^m}&\overset{}\longrightarrow & K\times\Z=K\times\langle z\rangle&\\
 \alpha &\longmapsto & \alpha, \hspace{8pt} 
 \alpha\in K, &\\
  s&\longmapsto & \beta^{-1} z. &
\end{array}\end{equation*}
$\Psi$ is well-defined because
\[
\Psi(s\alpha s^{-1})=
\beta^{-1} z\alpha z^{-1}\beta=\beta^{-1}\alpha\beta=\theta^m(\alpha)=\Psi(\theta^m(\alpha)).
\]
Clearly $\Psi$ is surjective. Finally, $\Psi$ is injective: If $y=\alpha s^n\in\ker(\Psi)$, then
\[
1=\Psi(\alpha s^n)=\alpha\beta^{-n}z^n.
\]
Thus $z^n=\beta^n\alpha^{-1}\in\langle z\rangle\cap K =1$, which implies that $n=0$. We conclude that $\alpha=1$, and so $y=1$. Hence, $\Psi$ is an isomorphism.

(iii)$\Rightarrow$(i) Finally, suppose $\Gamma_\theta$ contains a subgroup $\Delta=K\times\Z$ such that $[\Gamma_\theta:\Delta]=d<\infty$. By Lemma \ref{l:observations}(2), $\Delta\cong(K\cap\Delta)\rtimes_\eta\Z$, where $\eta=\theta^l|_{K\cap\Delta}$ for some $l\geq1$. Since $K\subset\Delta$, we deduce that
\[
l=[\Z:\pi(\Delta)]=[\Gamma_\theta/K:(\Delta K)/K]=[\Gamma_\theta:\Delta K]=[\Gamma_\theta:\Delta]=d.
\]
Hence, $\Delta
\cong K\rtimes_{\theta^d} \Z=\Gamma_{\theta^d}$ (and $C(\Gamma_{\theta^d})=\Z$). In particular, up to multiples, $\theta^{d}\in\mathrm{Inn}(K)$, that is, there is a (possibly trivial) $\beta\in K$ such that $t^{d}\alpha t^{-d}=\theta^{d}(\alpha)=\beta^{-1}\alpha\beta$, for all $\alpha\in K$.
Now, we observe
 \begin{equation*}\label{eq.conjugation}
 \begin{aligned}
\beta^{-1}\alpha\beta  &= \theta^{d}(\alpha)\\
                                    & =  \theta(\theta^{d}(\theta^{-1}(\alpha)))\\
                                    & =  \theta(\beta^{-1}\theta^{-1}(\alpha)\beta)\\
                                    & =  \theta(\beta^{-1})\alpha\theta(\beta), \forall\alpha\in K.
\end{aligned}
\end{equation*}
In particular, $\theta(\beta)\beta^{-1}\in C(K)=1$. We conclude that $\theta(\beta)=\beta$, i.e. $t\beta t^{-1}=\beta$. This implies that $\beta t^{d}\in C(\Gamma_\theta)$.
\end{proof}

\section{Finiteness in outer automorphism and homology groups}

\subsection{Euler characteristic}

The {\em cohomological dimension} of a group $\Gamma$ is defined to be 
\begin{equation}\label{eq:cddef}
 \begin{aligned}
\mathrm{cd}(\Gamma)&
=\inf\{n\in\N \ | \ \Z \ \text{admits a projective resolution of length} \ n\}\\
&=\sup\{n\in\N \ | \ H^n(\Gamma;A)\neq0 \ \text{for some} \ \Gamma\text{-module} \ A\}. 
                                \end{aligned}
\end{equation}
A group $\Gamma$ is said to be of {\em finite homological type} if it has a finite index subgroup  which has finite cohomological dimension and every such subgroup has finitely generated integral homology groups~\cite[Sec. II.3]{Bro1}. A subgroup $\Delta\subseteq\Gamma$ of finite index is of finite homological type if and only if $\Gamma$ is of finite homological type~\cite[Ch. IX, Lemma 6.1]{Bro}.

\begin{defn}\label{d:eulergroup}
Let $\Gamma$ be a group of finite homological type. The {\em Euler characteristic} of $\Gamma$ is defined by
\begin{equation}\label{eq.K}
\chi(\Gamma)= \left\{\begin{array}{ll}
        \sum_i(-1)^i\dim H_i(\Gamma;\R), & \text{if} \ \mathrm{cd}(\Gamma)<\infty,\\
        \\
        \chi(\Delta)/[\Gamma:\Delta], & \text{in general, where} \ [\Gamma:\Delta]<\infty  \ \text{and} \ \mathrm{cd}(\Delta)<\infty.
        \end{array}\right.
\end{equation}
\end{defn}

The above definition, originally due to Brown~\cite{Bro1,Bro}, does not depend on the choice of $\Delta$ as shown by the following theorem.

\begin{thm}{\normalfont(\cite[Ch. IX, Theorem 6.3]{Bro}).}\label{t:eulercoverings}
If $\Gamma$ is a (torsion-free) group of finite homological type and $\Delta$ is a finite index subgroup of $\Gamma$, then $\chi(\Delta)=[\Gamma:\Delta]\chi(\Gamma)$.
\end{thm}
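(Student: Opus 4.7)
The plan is to reduce to the case where $\Gamma$ itself has finite cohomological dimension, and then compute both Euler characteristics from a single finite projective resolution. First I would verify that the second line of Definition \ref{d:eulergroup} is consistent with itself: for two choices $\Delta_1, \Delta_2$ of finite-index subgroups with $\mathrm{cd}(\Delta_i)<\infty$, the intersection $\Delta_1\cap\Delta_2$ still has finite cohomological dimension and finite index in both; granting the finite-cd case of the theorem, applying it to the pairs $\Delta_1\cap\Delta_2\subseteq\Delta_i$ shows $\chi(\Delta_1)/[\Gamma:\Delta_1]=\chi(\Delta_2)/[\Gamma:\Delta_2]$, so $\chi(\Gamma)$ is unambiguous.

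Next I would carry out the reduction. Given $\Delta\leq\Gamma$ of finite index, pick any $\Gamma'\leq\Gamma$ of finite index with $\mathrm{cd}(\Gamma')<\infty$ (provided by the finite homological type hypothesis) and set $\Gamma''=\Gamma'\cap\Delta$. Then $\Gamma''$ still has finite cohomological dimension, and the finite-cd case, applied to $\Gamma''\subseteq\Gamma'$ and to $\Gamma''\subseteq\Delta$, gives
\[
\chi(\Gamma'')=[\Gamma':\Gamma'']\,\chi(\Gamma')=[\Delta:\Gamma'']\,\chi(\Delta).
\]
Combining with $\chi(\Gamma)=\chi(\Gamma')/[\Gamma:\Gamma']$ and the multiplicativity of indices $[\Gamma:\Delta]\,[\Delta:\Gamma'']=[\Gamma:\Gamma']\,[\Gamma':\Gamma'']$ yields $\chi(\Delta)=[\Gamma:\Delta]\,\chi(\Gamma)$.

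The core argument is the case $\mathrm{cd}(\Gamma)=n<\infty$. By Serre's theorem (torsion-freeness is used here), $\mathrm{cd}(\Delta)=n$ as well. Finite homological type together with $\mathrm{cd}(\Gamma)<\infty$ produces a finite resolution
\[
0\longrightarrow P_n\longrightarrow P_{n-1}\longrightarrow\cdots\longrightarrow P_0\longrightarrow\Z\longrightarrow 0
\]
by finitely generated projective $\Z\Gamma$-modules. Since $\Z\Gamma$ is free of rank $[\Gamma:\Delta]$ as a $\Z\Delta$-module, each $P_i$ is also a finitely generated projective $\Z\Delta$-module and the complex remains a $\Z\Delta$-projective resolution of $\Z$. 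The Euler--Poincar\'e principle then gives
\[
\chi(\Gamma)=\sum_{i}(-1)^i\dim_\R\bigl(P_i\otimes_{\Z\Gamma}\R\bigr),\qquad \chi(\Delta)=\sum_{i}(-1)^i\dim_\R\bigl(P_i\otimes_{\Z\Delta}\R\bigr),
\]
and for each $i$ the identity $\dim_\R(P_i\otimes_{\Z\Delta}\R)=[\Gamma:\Delta]\dim_\R(P_i\otimes_{\Z\Gamma}\R)$ (obvious when $P_i$ is free, and extended to projectives by taking a free complement) finishes the proof.

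The main obstacle I foresee is producing the finite-type projective resolution in the third paragraph: one needs to know that, under finite homological type and $\mathrm{cd}<\infty$, the group is actually of type $FP$, so that partial resolutions by finitely generated projectives can be truncated at dimension $n$ without breaking projectivity of the top module. This is exactly the content of Brown's Chapter VIII finiteness results, and invoking that machinery is the non-elementary step; everything else is bookkeeping with indices and ranks.
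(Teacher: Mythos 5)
Your bookkeeping in the first two paragraphs (well-definedness of Definition \ref{d:eulergroup} and the reduction to the finite-cd case) is fine, but the core step has a genuine gap: nothing in the hypotheses produces a resolution of $\Z$ by \emph{finitely generated} projective $\Z\Gamma$-modules. Finite homological type only says that the integral homology of $\Gamma$ and of its finite-index subgroups is finitely generated in each degree, and this is much weaker than type $FP$: already $FP_1$ is equivalent to $\Gamma$ being finitely generated, which is not implied by finite generation of the $H_i$, and more generally the $FP_n$ conditions cannot be read off from homological finiteness. Brown's Chapter VIII results (e.g.\ $FP$ is equivalent to $FP_\infty$ together with finite cohomological dimension, and a partial finite-type resolution can be truncated at the top dimension) all take $FP_n$-type input as a hypothesis; they do not bridge the gap from finitely generated homology to $FP$. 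Indeed, the whole point of Brown's class of groups of finite homological type --- and of the theorem itself, which the paper does not prove but quotes from \cite{Bro}, Ch.~IX, Theorem 6.3 --- is that it is not restricted to $FP$ groups, so a proof cannot begin by choosing a finite resolution of finite type.

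There is a second, smaller gap even in the $FP$ case: the identity $\dim_\R(P\otimes_{\Z\Delta}\R)=[\Gamma:\Delta]\dim_\R(P\otimes_{\Z\Gamma}\R)$ is not ``extended to projectives by taking a free complement''. Knowing the identity for $P\oplus P'$ free tells you only the sum of the two discrepancies, not that each vanishes. For a non-free finitely generated projective this dimension is governed by the Hattori--Stallings rank of $P$, and multiplicativity under restriction is a real theorem, not a formality: for $\Gamma$ finite and $\Delta=1$ it is exactly Swan's theorem that $P\otimes\Q$ is free, and for infinite groups it is the kind of statement addressed by the Bass conjecture. This is precisely why Brown develops the rank/trace formalism earlier in Chapter IX before proving 6.3. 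To repair your outline you would need either to work with groups of type $FL$ (finite free resolutions), where your computation is correct, or to replace the resolution argument by one that uses only homological finiteness (for instance Shapiro's lemma together with an analysis of $\chi(\Gamma;\Q[\Gamma/\Delta])$), which is where the real content of the quoted theorem lies.
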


The Euler characteristic is multiplicative under group extensions.

\begin{thm}{\normalfont(\cite[Ch. IX, Prop. 7.3(d)]{Bro}).}\label{t:eulerextensions}
Let $1\to K\to\Gamma\to Q\to1$ be a short exact sequence of groups, where $K$ and $Q$ are of finite homological type. If $\Gamma$ is virtually torsion-free, then it is of finite homological type and $\chi(\Gamma)=\chi(K)\chi(Q)$.
\end{thm}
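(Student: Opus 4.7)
The plan is to reduce to the torsion-free case and then apply the Lyndon--Hochschild--Serre (LHS) spectral sequence.

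First, I reduce to the case that $\Gamma$, $K$, $Q$ are all torsion-free of finite cohomological dimension. Since any finite-index subgroup of finite cd is torsion-free, the finite homological type hypothesis forces $K$ and $Q$ to be virtually torsion-free, and $\Gamma$ is virtually torsion-free by assumption. Choose a torsion-free finite-index $\Gamma'\le\Gamma$; if the quotient $\Gamma'/(K\cap\Gamma')$ still has torsion, shrink $\Gamma'$ further by passing to the preimage of a torsion-free finite-index subgroup of that quotient. This arranges that $\Gamma'$, $K':=K\cap\Gamma'$, and $Q':=\Gamma'/K'$ are all torsion-free and sit in a short exact sequence $1\to K'\to\Gamma'\to Q'\to 1$ with $[\Gamma:\Gamma']=[K:K'][Q:Q']$. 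Theorem~\ref{t:eulercoverings} then reduces the identity $\chi(\Gamma)=\chi(K)\chi(Q)$ to $\chi(\Gamma')=\chi(K')\chi(Q')$.

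Second, I feed this extension into the LHS spectral sequence $E^2_{p,q}=H_p(Q';H_q(K';A))\Rightarrow H_{p+q}(\Gamma';A)$. The standard bound $\mathrm{cd}(\Gamma')\le \mathrm{cd}(K')+\mathrm{cd}(Q')<\infty$ follows from the cohomological description in (\ref{eq:cddef}), and finite generation of $H_*(\Gamma';\Z)$ follows from the corresponding finiteness on the $E^2$-page, which simultaneously confirms that $\Gamma$ is of finite homological type. Specializing to $A=\R$, every $E^2_{p,q}$ is finite-dimensional over the field $\R$, and Euler characteristic is preserved along differentials, so
\[
\chi(\Gamma')=\sum_{p,q}(-1)^{p+q}\dim_\R H_p(Q';\,H_q(K';\R)).
\]

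The crux is then a twisted Euler characteristic formula: for every finite-dimensional $\R Q'$-module $V$,
\[
\sum_p(-1)^p\dim_\R H_p(Q';V)=\chi(Q')\cdot\dim_\R V.
\]
Applied with $V=H_q(K';\R)$ for each $q$ (finite-dimensional by finite homological type of $K'$) and summed over $q$, this collapses the previous double sum into $\chi(Q')\chi(K')$, finishing the proof. The main obstacle is establishing this twisted formula. The natural strategy is to produce a finite projective $\Z Q'$-resolution of $\Z$ whose Euler characteristic (alternating sum of ranks over $\Z Q'$) equals $\chi(Q')$, tensor it over $\Z Q'$ with $V$ to obtain a finite complex of $\R$-vector spaces computing $H_*(Q';V)$, and read off its total Euler characteristic as $\chi(Q')\dim_\R V$. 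Extracting such a resolution from only the ``finite homological type'' hypothesis (rather than from an outright type $FP$ assumption) is the delicate point, and is where the machinery of \cite{Bro} genuinely carries the content; the torsion-free reduction above is precisely what brings the problem into the range of that machinery.
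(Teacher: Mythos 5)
The paper offers no proof of this statement at all: it is quoted verbatim from Brown \cite{Bro}*{Ch.~IX, Prop.~7.3(d)}, so the only meaningful comparison is with Brown's argument, whose outer architecture you have reproduced correctly. Your reduction to a torsion-free extension $1\to K'\to\Gamma'\to Q'\to1$ with $[\Gamma:\Gamma']=[K:K'][Q:Q']$, the bound $\mathrm{cd}(\Gamma')\le\mathrm{cd}(K')+\mathrm{cd}(Q')$, and the collapse of the homological Lyndon--Hochschild--Serre spectral sequence into $\sum_{p,q}(-1)^{p+q}\dim_\R H_p\bigl(Q';H_q(K';\R)\bigr)$ are all fine, modulo standard facts you use implicitly (finite homological type passes to finite-index subgroups, and by Serre's theorem a torsion-free finite-index subgroup of a group of finite vcd has finite cd).

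The genuine gap is exactly where you locate it, but the route you sketch for closing it would fail under the stated hypotheses. The twisted formula $\sum_p(-1)^p\dim_\R H_p(Q';V)=\chi(Q')\dim_\R V$ carries the entire content, and your plan --- a finite projective $\Z Q'$-resolution of $\Z$ whose alternating sum of ranks is $\chi(Q')$ --- presupposes that $Q'$ is of type $FP$ (really $FL$: for projective, non-free modules ``rank'' already needs Hattori--Stallings traces before one can read off $\chi(Q')\dim_\R V$ from the tensored complex). Finite homological type is strictly weaker and supplies no such resolution; the whole point of Brown's Ch.~IX treatment is to define $\chi$ and prove these formulas for groups that need not be $FP$. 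What Brown actually uses is a prior lemma, established directly from the finite homological type hypothesis (no finite resolution being available in this generality): for a torsion-free group $G$ of finite homological type and any $G$-module $M$ finitely generated as an abelian group, $\sum_i(-1)^i\,\mathrm{rank}_\Z H_i(G;M)=\chi(G)\cdot\mathrm{rank}_\Z M$; the same lemma yields Theorem~\ref{t:eulercoverings} via Shapiro's lemma applied to $M=\Z[\Gamma/\Gamma']$. Note also that your finite-generation claim on the $E^2$-page already quietly uses finite generation of $H_p(Q';M)$ for \emph{twisted} finitely generated coefficients $M=H_q(K';\Z)$, which does not follow formally from the definition of finite homological type as stated in the paper and is part of what this lemma machinery provides. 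So the skeleton is right, but the step on which everything rests is missing, and the proposed way to supply it assumes more than the hypotheses give.
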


We immediately obtain the vanishing of the Euler characteristic for mapping tori.

\begin{cor}\label{c:eulermappingtori}
Let $K$ be a torsion-free group of finite homological type and $\Gamma_\theta=K\rtimes_\theta\Z$ be the mapping torus of an automorphism $\theta\colon K\to K$. Then $\chi(\Gamma_\theta)=0$.
\end{cor}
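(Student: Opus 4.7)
The plan is to apply Theorem \ref{t:eulerextensions} to the defining short exact sequence
\[
1\longrightarrow K\longrightarrow \Gamma_\theta\longrightarrow\Z\longrightarrow 1,
\]
since the multiplicativity of the Euler characteristic will then give $\chi(\Gamma_\theta)=\chi(K)\chi(\Z)$, and the factor $\chi(\Z)$ vanishes.

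First I would verify the hypotheses of Theorem \ref{t:eulerextensions}. By assumption, $K$ is a (torsion-free) group of finite homological type. The group $\Z$ is of finite homological type with $\mathrm{cd}(\Z)=1$ and $H_0(\Z;\R)=H_1(\Z;\R)=\R$, so directly from Definition \ref{d:eulergroup} we obtain $\chi(\Z)=1-1=0$. It remains to check that $\Gamma_\theta$ is virtually torsion-free; in fact it is itself torsion-free, because any element of finite order must map to a torsion element in $\Z$, hence into $K$, which is torsion-free by hypothesis. Thus Theorem \ref{t:eulerextensions} applies and yields $\chi(\Gamma_\theta)=\chi(K)\cdot 0=0$.

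There is no real obstacle here; the only mildly delicate point is ensuring $\Gamma_\theta$ is torsion-free (needed to invoke Theorem \ref{t:eulerextensions}), and this is immediate from the structure of a semidirect product with $\Z$. The conclusion $\chi(\Gamma_\theta)=0$ then follows with no further work.
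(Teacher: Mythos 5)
Your proof is correct and follows exactly the route the paper intends: apply Theorem \ref{t:eulerextensions} to the extension $1\to K\to\Gamma_\theta\to\Z\to1$, noting $\chi(\Z)=0$ and that $\Gamma_\theta$ is (virtually) torsion-free since both $K$ and $\Z$ are torsion-free. Nothing is missing; the verification of the hypotheses is carried out correctly.
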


\begin{rem}
Corollary \ref{c:eulermappingtori} tells us that the Euler characteristic of $\Gamma_\theta$ is not applicable in showing that $\Gamma_\theta$ is co-Hopfian.  Similarly, the $L^2$-Betti numbers vanish for $\Gamma_\theta$~\cite{Lue4}.
\end{rem}

We remark that if $\Gamma$ is of finite type, then Definition \ref{d:eulergroup} coincides with Wall's definition via classifying spaces~\cite{Wal1}, and we write $\chi(\Gamma)=\chi(B\Gamma)$. 

\subsection{The finite co-Hopf condition}

\begin{defn}
A group $\Gamma$ is called {\em finitely co-Hopfian} if every injective endomorphism of $\Gamma$ onto a finite index subgroup is an automorphism.
\end{defn}

We will now use condition (\ref{finiteness}) to build a bridge between Lemma \ref{l:center} and the finite co-Hopf property. 

\begin{thm}\label{t:center-finitecoHopf}
Let  $K$ be a group of finite homological type with $\chi(K)\neq0$ and $\theta\colon K\to K$ be an automorphism. If there is no integer $d>1$ such that $\theta^d$ is conjugate to $\theta$ in $\mathrm{Out}(K)$, then the mapping torus $\Gamma_\theta=K\rtimes_\theta\Z$ is finitely co-Hopfian. 

In particular, if $K$ satisfies condition {\normalfont(\ref{finiteness})}, then $\Gamma_\theta$ is finitely co-Hopfian whenever $\theta$ has infinite order in $\mathrm{Out}(K)$.
\end{thm}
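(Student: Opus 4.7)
The plan is to argue by contradiction. Suppose $\varphi\colon\Gamma_\theta\to\Gamma_\theta$ is an injective endomorphism whose image $\Delta=\varphi(\Gamma_\theta)$ has finite index $d>1$; I will produce $\xi\in\mathrm{Aut}(K)$ such that $\theta^d$ and $\xi\theta\xi^{-1}$ represent the same class in $\mathrm{Out}(K)$, contradicting the hypothesis. First apply Lemma~\ref{l:observations}(2) to decompose $\Delta\cong(K\cap\Delta)\rtimes_\eta\Z$, where $\eta$ equals $\theta^l|_{K\cap\Delta}$ up to the inner automorphism coming from the chosen lift of the generator of $\pi(\Delta)=l\Z$; here $l\geq 1$ and $d=l\cdot[K:K\cap\Delta]$.

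The isomorphism $\varphi$ sends $K\triangleleft\Gamma_\theta$ to a normal subgroup $\varphi(K)\triangleleft\Delta$ with $\Delta/\varphi(K)\cong\Z$, so $\Delta$ carries two normal subgroups with infinite cyclic quotient: $\varphi(K)$ and $K\cap\Delta$. By the isomorphism and by Theorem~\ref{t:eulercoverings} respectively, both have non-zero Euler characteristic: $\chi(\varphi(K))=\chi(K)\neq 0$ and $\chi(K\cap\Delta)=[K:K\cap\Delta]\cdot\chi(K)\neq 0$. The crux is to show these two subgroups coincide. If not, set $L=\varphi(K)\cap(K\cap\Delta)$; then $\varphi(K)/L$ embeds as a non-trivial subgroup of $\Delta/(K\cap\Delta)\cong\Z$, so $\varphi(K)/L\cong\Z$. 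Applying Theorem~\ref{t:eulerextensions} to $1\to L\to\varphi(K)\to\Z\to 1$ and using $\chi(\Z)=0$ would force $\chi(\varphi(K))=0$, contradicting $\chi(\varphi(K))=\chi(K)\neq 0$. Hence $\varphi(K)=K\cap\Delta$.

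Comparing Euler characteristics then gives $\chi(K)=[K:K\cap\Delta]\cdot\chi(K)$, so $[K:K\cap\Delta]=1$: $K\subseteq\Delta$, $d=l$, and $\psi:=\varphi|_K\colon K\to K$ is an automorphism. Writing $\varphi(t)=\beta' t^d$ with $\beta'\in K$ and applying $\varphi$ to the defining relation $t\alpha t^{-1}=\theta(\alpha)$ for $\alpha\in K$ yields $\psi\circ\theta=(c_{\beta'}\circ\theta^d)\circ\psi$, i.e., $\psi\theta\psi^{-1}$ and $\theta^d$ define the same class in $\mathrm{Out}(K)$. Since $d>1$, this contradicts the hypothesis, so $\varphi$ must be surjective and $\Gamma_\theta$ is finitely co-Hopfian. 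For the ``in particular'' clause: if $K$ satisfies~(\ref{finiteness}) and $\theta$ has infinite order in $\mathrm{Out}(K)$, then no $d>1$ can make $\theta^d$ conjugate to $\theta$ in $\mathrm{Out}(K)$---otherwise~(\ref{finiteness}) would produce $\theta^q\in\mathrm{Inn}(K)$ for some $q>1$, contradicting infinite order---so the first part applies.

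The principal technical obstacle is the identification step $\varphi(K)=K\cap\Delta$: applying Theorem~\ref{t:eulerextensions} to the extension $1\to L\to\varphi(K)\to\Z\to 1$ requires $L$ to be of finite homological type. Since $L$ may well be of infinite index in both $\varphi(K)$ and $K\cap\Delta$, this finiteness has to be extracted carefully from the hypotheses that $K$ is of finite homological type and $\chi(K)\neq 0$.
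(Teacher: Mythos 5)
Your overall strategy (Euler characteristic counting via Lemma~\ref{l:observations}(2), then reading off the relation $t\alpha t^{-1}=\theta(\alpha)$ under $\varphi$ to land on a conjugacy $\theta^{d}\sim\theta$ in $\mathrm{Out}(K)$) is the same as the paper's, and your endgame and the ``in particular'' clause are fine. But the pivotal step, forcing $\varphi(K)=K\cap\Delta$, is not established, and the argument you propose for it would fail. You apply Theorem~\ref{t:eulerextensions} to $1\to L\to\varphi(K)\to\Z\to1$ with $L=\varphi(K)\cap(K\cap\Delta)$, but that theorem requires the kernel to be of finite homological type, and there is no reason for $L$ (typically of infinite index in $\varphi(K)$) to have this property. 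This is not a removable technicality: the principle you are implicitly invoking --- that a group of finite homological type with $\chi\neq0$ cannot surject onto $\Z$ --- is simply false (a closed surface group, or $K$ itself in the motivating examples, surjects onto $\Z$ with infinitely generated kernel, and $\chi$ is not multiplicative over such extensions). Non-vanishing of $\chi$ only obstructs extensions whose kernel is again of finite homological type, as in Corollary~\ref{c:eulermappingtori}. A secondary point: your dichotomy ``if $\varphi(K)\neq K\cap\Delta$ then $\varphi(K)/L$ is a non-trivial subgroup of $\Z$'' only covers the case $\varphi(K)\not\subseteq K\cap\Delta$; the case of proper containment gives no map onto $\Z$ at all.

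The paper gets around exactly this difficulty by a different ordering of the steps. First it compares $\chi(\varphi(K))=\chi(K)$ with $\chi(K\cap\varphi(\Gamma_\theta))=[K:K\cap\varphi(\Gamma_\theta)]\,\chi(K)$ to conclude $K=K\cap\varphi(\Gamma_\theta)$, i.e.\ $K\subseteq\varphi(\Gamma_\theta)$, whence $\varphi(\Gamma_\theta)=K\rtimes_{\theta^{d}}\Z$; here only finite-index subgroups of $K$ enter, so Theorem~\ref{t:eulercoverings} suffices. Iterating gives $\varphi^{m}(\Gamma_\theta)=K\rtimes_{\theta^{d^{m}}}\Z$ for all $m$, and since $d>1$ the intersection $\bigcap_m\varphi^m(\Gamma_\theta)$ equals $K$, which yields the inclusion $\varphi(K)\subseteq K$. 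Only at that point is an Euler characteristic argument applied to conclude $\varphi(K)=K$, and now the relevant kernel is $\varphi(K)\cong K$, which \emph{is} of finite homological type, so multiplicativity is legitimate. With $\varphi|_K$ an automorphism $\xi$ of $K$ and $\varphi(t)=\beta t^{\pm d}$, the defining relation gives $\xi\theta\xi^{-1}=c_\beta\theta^{\pm d}$, i.e.\ $\theta\sim\theta^{\pm d}$ in $\mathrm{Out}(K)$; note you also dropped the possible sign $-d$ (it is handled by conjugating twice to get $\theta\sim\theta^{d^2}$). So the proposal has a genuine gap at its central identification step; the rest would go through once that step is replaced by an argument of the paper's type.
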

\begin{proof}
Let $\varphi\colon\Gamma_\theta\to\Gamma_\theta$ be an injective homomorphism such that $[\Gamma_\theta:\varphi(\Gamma_\theta)]=d<\infty$. By Lemma \ref{l:observations}(2), $\varphi(\Gamma_\theta)$ is a semi-direct product
\begin{equation}
\varphi(\Gamma_\theta)=(K\cap\varphi(\Gamma_\theta))\rtimes_\eta\Z,
\end{equation}
where $\eta=\theta^{l}|_{K\cap\varphi(\Gamma_\theta)}$ for some $l\neq0$. Since
\[
[K:K\cap\varphi(\Gamma_\theta)]\leq[\Gamma_\theta:\varphi(\Gamma_\theta)]<\infty,
\]
and $\chi(K)\neq0$, Theorem \ref{t:eulercoverings} tells us that
\begin{equation}\label{eq:eulerintersection}
\chi(K\cap\varphi(\Gamma_\theta))=[K:K\cap\varphi(\Gamma_\theta)]\chi(K)\neq0.
\end{equation}

Since $\varphi$ is injective, the restriction $\varphi|_K\colon K\to\Gamma_\theta$ is an isomorphism onto its image $\varphi(K)$. Thus
\begin{equation}\label{eq:eulerK}
\chi(K)=\chi(\varphi(K)).
\end{equation}
Moreover, since $\varphi(K)$ is normal in $\varphi(\Gamma_\theta)$ we obtain a commutative diagram
\begin{equation}\label{eq:commutecyclicimage}
\xymatrix{
 & &\Gamma_\theta\ar[d]^{}\ar[r]^{\varphi}&\varphi(\Gamma_\theta) \ar[d]^{}&
\\ 
& & \Gamma_\theta/K\ar[r]^{\overline\varphi \ \ \ }&\varphi(\Gamma_\theta)/\varphi(K), & 
\\ }
\end{equation}
where $\varphi$ and $\overline\varphi$ are isomorphisms onto their images. In particular, $\varphi(\Gamma_\theta)$ is a semi-direct product
\[
\varphi(\Gamma_\theta)\cong\varphi(K)\rtimes\Z\cong K\rtimes\Z.
\]
In addition, $(\varphi|_{\varphi(\Gamma_\theta)})^{-1}(\varphi(K))=\varphi^{-1}(\varphi(K))\cap\varphi(\Gamma_\theta)=K\cap\varphi(\Gamma_\theta)$ (recall $\varphi$ is injective) implies that
\begin{equation}\label{eq:eulerimage}
\chi(\varphi(K))=\chi(K\cap\varphi(\Gamma_\theta))\neq0.
\end{equation}
By (\ref{eq:eulerK}) and (\ref{eq:eulerimage}) we obtain $\chi(K)=\chi(K\cap\varphi(\Gamma_\theta))$ and thus by (\ref{eq:eulerintersection}) we conclude that $K=K\cap\varphi(\Gamma_\theta)$. Hence, Lemma \ref{l:observations}(2) tells us that 
\begin{equation}\label{eq:imagefinal}
\varphi(\Gamma_\theta)=K\rtimes_{\theta^d}\Z=\Gamma_{\theta^d}.
\end{equation}
Repeating the above argument for every $m\in\N$, we obtain a sequence of normal subgroups $\varphi^m(\Gamma_\theta)$ of $\Gamma_\theta$ with
\[
[\Gamma_\theta:\varphi^m(\Gamma_\theta)]=d^m, \ \text{and} \ K\cap\varphi^m(\Gamma_\theta)=K,
\]
which implies that 
\begin{equation}\label{eq:imagefinalm}
\varphi^m(\Gamma_\theta)=K\rtimes_{\theta^{d^m}}\Z, \ \forall m\in\N.
\end{equation}
If $d>1$, we derive
\[
\varphi(K)=\varphi(\cap_m\varphi^m(\Gamma_\theta))\subseteq\cap_m\varphi^{m+1}(\Gamma_\theta)=K.
\]
Hence by (\ref{eq:eulerK}), the injectivity of $\varphi$ and $[\Gamma_\theta:\varphi(\Gamma_\theta)]<\infty$, we deduce that $K=\varphi(K)$; in particular
\[
\varphi|_K\colon K\to K
\]
is an isomorphism.

We have proved that there exists an isomorphism
\begin{equation*}\begin{array}{rcll}\label{eq:iso2}
\varphi\colon\Gamma_{\theta}=K\rtimes_\theta\Z&\overset{}\longrightarrow &
\Gamma_{\theta^d}=K\rtimes_{\theta^d}\Z&\\
 (\alpha,0) &\longmapsto & (\xi(\alpha),0), \hspace{8pt} 
 \xi=\varphi|_K, \ \alpha\in K, &\\
  (e,1)&\longmapsto & (\beta,\pm1),  \hspace{8pt}
  \ \beta\in K,&
\end{array}\end{equation*}
where $e$ denotes the trivial element of $K$. In particular, the relation
\[
(e,1)(\alpha,0)(e,1)^{-1}=(\theta(\alpha),0)
\]
in $\Gamma_\theta$ must be preserved by $\varphi$. We compute the image on both sides to find
\[
\varphi((e,1)(\alpha,0)(e,1)^{-1})=(\beta\theta^{\pm d}(\xi(\alpha))\beta^{-1},0) \ \text{and} \ \varphi(\theta(\alpha),0)=(\xi(\theta(\alpha)),0).
\]
Since $d>1$, the theorem follows.
\end{proof}

\begin{rem}[Finite hyper-Hopf condition]
The above proof (up to the isomorphism $\Gamma_\theta\cong\Gamma_{\theta^d}$) implies as well that, if $\Gamma_\theta$ is {\em finitely hyper-Hopfian} (see Definition \ref{d:hyperHopf}), then it is finitely co-Hopfian. We will discuss the (finite) hyper-Hopf condition in Section \ref{ss:hyper-Hopf}. 
\end{rem}

\subsection{The $K$-module $D=H^n(K;\Z K)$}\label{ss:criterion}

\begin{defn}\label{d:PD}
A group $K$ is said to be a {\em duality group} of dimension $n$ over $\Z$ if
\begin{itemize}
\item[(i)] $\Z$ admits a projective resolution  of finite type and finite length over $\Z K$,  
i.e., $K$ is of type $FP$, and
\item[(ii)] $H^i(K,\Z K)=0$ for all $i\neq n$ and $H^n(K,\Z K)$ is torsion-free as an Abelian group. 
\end{itemize}
The $K$-module $H^n(K,\Z K)$ is called the {\em dualizing module} of $K$. 
If, in addition, $H^n(K,\Z K)\cong\Z$, then $K$ is called {\em Poincar\'e Duality group} or {\em $PD^n$-group} in short. 

Hereafter, the $K$-module $H^n(K,\Z K)$ will be denoted by $D$.
\end{defn}

The above definition, due to Johnson and Wall~\cite{JW}, coincides with the definition given by Bieri~\cite{Bie1,Bie}: $K$ is a duality group of dimension $n$ over $\Z$ if there are natural cap product isomorphisms
\[
H^i(K;A)\overset{\cong}\longrightarrow H_{n-i}(K;D\otimes A),
\]
for all $i$ and all $K$-modules $A$. When $D$ is infinite cyclic, then $K$ is a $PD^n$-group. Note that $\mathrm{cd}(K)=n$.

For any group $K$, the cohomology groups $H^*(K;\Z K)$ inherit a canonical (right) $K$-module structure, and  for any (left) $K$-module $A$ we can define a canonical map form the tensor product $H^*(K;\Z K)\otimes_{\Z K}A$ 
to $H^*(K;A)$. When $K$ is 
of type $FP$ with 
$\mathrm{cd}(K)=n$, then there is 
a canonical isomorphism 
\begin{equation}\label{eq.uctD}
D\otimes_{\Z K}A\overset{\cong}\longrightarrow H^n(K;A).
\end{equation}
Similarly, there is an isomorphism
 \begin{equation}\label{eq.uctD2}
H_n(K;A)\overset{\cong}\longrightarrow\mathrm{Hom}_{\Z K}(D,A); 
\end{equation}
see~\cite[Lemma 9.1]{Bie} for details.

The following remarkable theorem of Strebel~\cite{Str} is particularly useful when dealing with aspherical manifolds:

\begin{thm}{\normalfont(\cite[Theorem, p. 317]{Str}).}\label{t:Strebel}
An infinite-index subgroup of a $PD^n$-group has cohomological dimension at most $n-1$. 
\end{thm}

Bieri~\cite[Prop. 9.22]{Bie} shows an analogous result using {\em homological dimension} $\mathrm{hd}$ (note that $\mathrm{cd}=\mathrm{hd}$ for groups of type $FP$). 

\subsection{The co-Hopf condition}\label{ss:co-Hopf}

\begin{defn}
A group $\Gamma$ is called {\em co-Hopfian} if every injective endomorphism of $\Gamma$ is an isomorphism.
\end{defn}

If $H_n(K;\R)\neq0$, then the universal coefficient type isomorphism, given by (\ref{eq.uctD2}), tells us that $D$ contains a $\Z$-factor. This prompt us to recast the proof of a result of Fel'dman~\cite{Fel} for the cohomological dimensions of short exact sequences.

\begin{thm}[Fel'dman's equality]\label{t:Feldman}
Let $1\to K\to\Gamma\to Q\to1$ be a short exact sequence of groups such that $K$ is of type $FP$ and $\Z\subseteq D=H^n(K;\Z K)$, where $\mathrm{cd}(K)=n$. If $\mathrm{cd}(Q)=q<\infty$, then $\mathrm{cd}(\Gamma)=\mathrm{cd}(K)+\mathrm{cd}(Q)$.
\end{thm}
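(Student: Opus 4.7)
The upper bound $\mathrm{cd}(\Gamma)\leq n+q$ is immediate from the Lyndon--Hochschild--Serre spectral sequence
\[
E_2^{p,r} \;=\; H^p\!\bigl(Q; H^r(K;A)\bigr) \;\Longrightarrow\; H^{p+r}(\Gamma;A),
\]
valid for every $\Gamma$-module $A$: the $E_2$-page vanishes as soon as $p>q$ or $r>n$, hence in every total degree greater than $n+q$. The substance of the theorem is the reverse inequality, and my plan is to produce a single $\Gamma$-module $A$ realising $H^{n+q}(\Gamma;A)\neq 0$ by extracting it from the top-right corner of the very same spectral sequence.

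The corner $E_2^{q,n}$ is a permanent cycle for purely dimensional reasons: an incoming differential $d_s\colon E_s^{q-s,\,n+s-1}\to E_s^{q,n}$ starts from row $n+s-1>n$, where $H^{n+s-1}(K;A)=0$, while an outgoing differential $d_s\colon E_s^{q,n}\to E_s^{q+s,\,n-s+1}$ lands in column $q+s>q$, where $H^{q+s}(Q;-)=0$. Therefore $E_\infty^{q,n}=E_2^{q,n}$ is an associated graded quotient of $H^{n+q}(\Gamma;A)$, and it suffices to find $A$ with $H^q(Q; H^n(K;A))\neq 0$. Since $\mathrm{cd}(Q)=q$, I would choose a $Q$-module $M$ with $H^q(Q;M)\neq 0$ and pull it back to a $\Gamma$-module along $\Gamma\twoheadrightarrow Q$, so that $K$ acts trivially. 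The canonical isomorphism (\ref{eq.uctD}) then identifies
\[
H^n(K;M) \;\cong\; D\otimes_{\Z K}M \;\cong\; D_K\otimes_\Z M,
\]
where $D_K=D\otimes_{\Z K}\Z$ inherits a $Q$-action from the conjugation action of $\Gamma$ on $D$, which descends to $Q$ because inner automorphisms act trivially on $H^*(K;-)$.

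The finishing step is to upgrade the hypothesis $\Z\subseteq D$ into a non-vanishing of $H^q(Q; D_K\otimes_\Z M)$. The cleanest outcome is that the $\Z K$-linear inclusion $\Z\hookrightarrow D$ descends to a $Q$-equivariant split inclusion $\Z\hookrightarrow D_K$: tensoring with $M$ then exhibits $M$ as a $Q$-module direct summand of $H^n(K;M)$, whence $H^q(Q;M)$ injects into $E_2^{q,n}$, both are non-zero, and $\mathrm{cd}(\Gamma)\geq n+q$ as required.

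I expect this last splitting to be the main obstacle. A priori $\Z$ is only a $\Z K$-submodule of $D$, not a $\Z\Gamma$-submodule, and an embedding need not split off. My preferred route is to observe that the $\Z K$-action on this copy of $\Z$ is trivial, so that $\Z$ sits inside the $K$-invariants $D^K$, which is automatically $Q$-stable; interpreting this $\Z$ via (\ref{eq.uctD2}) as a distinguished class dual to a non-zero element of $H_n(K;\R)$ should then pin it down as a $Q$-equivariant summand of $D_K$, up to passing to a finite-index subgroup of $Q$ if necessary. Should this fail to split, the fallback is to replace $M$ by a coinduced or injectively enlarged $Q$-module for which the obstructing $\mathrm{Ext}$ terms vanish while $H^q(Q;-)\neq 0$ is preserved.
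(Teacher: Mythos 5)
Your upper bound and the ``top-right corner'' argument ($E_2^{q,n}=E_\infty^{q,n}$, so it suffices to exhibit one $\Gamma$-module $A$ with $H^q(Q;H^n(K;A))\neq0$) are correct and are exactly the frame the paper uses (its isomorphism (\ref{eq.LHSiso})). The gap is in your choice of $A$: by inflating a $Q$-module $M$ along $\Gamma\twoheadrightarrow Q$ you force $K$ to act trivially, so $H^n(K;A)\cong D\otimes_{\Z K}M\cong D_K\otimes_\Z M$ with $D_K=D\otimes_{\Z K}\Z$ the coinvariants, and at that point the hypothesis $\Z\subseteq D$ is no longer usable. The containment $\Z\subseteq D$ need not survive to $D_K$: even when the chosen copy of $\Z$ lies in the invariants $D^K$, the natural map $D^K\to D_K$ can vanish (e.g.\ $K=\Z$ acting unipotently on $D=\Z^2$ by $(a,b)\mapsto(a+b,b)$: here $D^K\cong\Z$ maps to $0$ in $D_K\cong\Z$). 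On top of that, your ``cleanest outcome'' asks the embedding $\Z\hookrightarrow D_K$ to be $Q$-equivariant and split, which is an extra hypothesis nowhere implied by the statement ($Q$ acts on $D$ only through an outer action and need not preserve any chosen $\Z$). Finally, the fallback you propose is a dead end: coinduced or injective $Q$-modules are acyclic in positive degrees, so replacing $M$ by such a module destroys precisely the non-vanishing $H^q(Q;-)\neq0$ you need to keep.

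The paper avoids all of this by going to the other extreme: instead of modules with trivial $K$-action it takes $A=B\otimes\Z\Gamma$, a free $\Z\Gamma$-module chosen (using $\mathrm{cd}(Q)=q<\infty$) so that $H^q(Q;A)\neq0$. Then (\ref{eq.uctD}) gives $H^n(K;A)\cong D\otimes_{\Z K}(B\otimes\Z\Gamma)\cong(D\otimes B)\otimes\Z\Gamma$, an induced-type module in which the twisting is absorbed by the group-ring factor; consequently the purely abelian-group containment $\Z\subseteq D$ already produces a copy of $A$ inside $H^n(K;A)$, with no descent to coinvariants and no equivariant splitting required, and the corner isomorphism (\ref{eq.LHSiso}) then yields $H^{n+q}(\Gamma;A)\neq0$, i.e.\ $\mathrm{cd}(\Gamma)\geq n+q$. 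If you want to repair your argument, the fix is exactly this change of coefficients (free over $\Z\Gamma$, not inflated from $Q$), which is the classical Fel'dman--Bieri device.
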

\begin{proof}
By the Hochschild-Serre spectral sequence~\cite{HS}, we obtain
an isomorphism
\begin{equation}\label{eq.LHSiso}
H^q(Q;H^n(K;A))\cong H^{n+q}(\Gamma;A),
\end{equation}
for any $\Gamma$-module $A$, and 
\begin{equation}\label{eq.LHS}
\mathrm{cd}(\Gamma)\leq\mathrm{cd}(K)+\mathrm{cd}(Q)=n+q.
\end{equation}
Since $\mathrm{cd}(Q)=q<\infty$, we can moreover assume that $A=B\otimes\Z\Gamma$ for some free $\Z$-module $B$, where $H^q(Q;A)\neq0$. Hence, by (\ref{eq.uctD}) we obtain
\begin{equation}\label{eq.Feldman}
 \begin{aligned}
H^{n}(K;A)  = H^n(K;B\otimes\Z\Gamma)
                               \cong D\otimes_{\Z K}(B\otimes\Z\Gamma)
                                     \cong (D\otimes B)\otimes\Z\Gamma.
 \end{aligned}
\end{equation}
Since $\Z\subseteq D$, we conclude that $A\subseteq H^n(K;A)$. This means, by (\ref{eq.LHSiso}) and $H^q(Q;A)\neq0$, that $H^{n+q}(\Gamma;A)\neq0$ and the theorem follows.
\end{proof}

We are now in position to obtain an even stronger conclusion than that of Theorem \ref{t:center-finitecoHopf}, using finiteness conditions in top cohomology. We prove the following result, which is independent of condition (\ref{finiteness}).

\begin{thm}\label{t:co-Hopf}
Let $\Gamma_\theta$ be the mapping torus of an automorphism $\theta\colon K\to K$, where  $K$ is a 
group of type $FP$ with 
 $\mathrm{cd}(K)=n$, such that  
 $H_n(K;\R)\neq0$ and $D=H^n(K,\Z K)$ is finitely generated. If $\Gamma_\theta$ is finitely co-Hopfian, then it is co-Hopfian.
\end{thm}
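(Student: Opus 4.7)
The plan is to show that every injective endomorphism $\varphi\colon\Gamma_\theta\to\Gamma_\theta$ has image of finite index in $\Gamma_\theta$, after which the finite co-Hopf hypothesis finishes the job.

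The first step is to compute $\mathrm{cd}(\Gamma_\theta)=n+1$. The hypothesis $H_n(K;\R)\neq 0$, together with the universal coefficient isomorphism~(\ref{eq.uctD2}), gives $\Z\subseteq D$; hence Fel'dman's equality (Theorem~\ref{t:Feldman}) applied to $1\to K\to\Gamma_\theta\to\Z\to 1$ yields the claim. Next, setting $H:=\varphi(\Gamma_\theta)\cong\Gamma_\theta$, I would argue that $H$ cannot lie in $K$, since otherwise $n+1=\mathrm{cd}(H)\leq\mathrm{cd}(K)=n$. Therefore the image of $H$ in $\Gamma_\theta/K=\Z$ is a nontrivial, hence infinite cyclic, subgroup $HK/K$. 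With $J:=H\cap K$, the short exact sequence $1\to J\to H\to HK/K\to 1$ splits, giving $H\cong J\rtimes\Z$. Together with $J\subseteq K$ (so $\mathrm{cd}(J)\leq n$) and $\mathrm{cd}(H)\leq\mathrm{cd}(J)+1$, this forces $\mathrm{cd}(J)=n$.

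The crux is to upgrade this to $[K:J]<\infty$, and this is where the finite generation of $D$ must enter decisively. I would approach it via a Strebel-type dichotomy: by Shapiro's lemma together with~(\ref{eq.uctD}), one identifies $H^n(J;\Z)\cong D\otimes_{\Z J}\Z=D_J$, and $D_J$ surjects onto $D_K$, which is nonzero thanks to $H_n(K;\R)\neq 0$ via~(\ref{eq.uctD2}); so $H^n(J;\Z)\neq 0$. The plan would then be to leverage the finite generation of $D$ as a $\Z K$-module to run a Strebel-style argument, analogous to the classical statements for $PD^n$-groups and duality groups recalled in Section~\ref{ss:criterion}, forcing any subgroup $J\leq K$ with $\mathrm{cd}(J)=n$ to be of finite index. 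Once this is in hand, $[\Gamma_\theta:H]=[K:J]\cdot[\Z:HK/K]<\infty$, so $\varphi$ is an injective endomorphism onto a finite index subgroup, and the finite co-Hopf hypothesis on $\Gamma_\theta$ completes the proof.

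The main obstacle I anticipate is precisely this Strebel-type step: in contrast to the $PD^n$ or duality setting, our hypotheses on $K$ are weaker, and one must use the finite generation of $D$ in an essential way to force a subgroup of top cohomological dimension to have finite index. It is exactly this hypothesis that fails in the Bridson--Groves--Hillman--Martin free-by-cyclic setting (where $H^1(F_r;\Z F_r)$ is not finitely generated), and its presence here is what distinguishes the co-Hopf conclusion from the weaker results available in that context.
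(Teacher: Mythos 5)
Your opening step (Fel'dman's equality via $\Z\subseteq D$, giving $\mathrm{cd}(\varphi(\Gamma_\theta))=\mathrm{cd}(\Gamma_\theta)=n+1$) coincides with the paper's, and the reduction to showing that the image has finite index is the right strategy. But the heart of your argument -- the ``Strebel-type dichotomy'' forcing any subgroup $J\leq K$ with $\mathrm{cd}(J)=n$ to have finite index in $K$ -- is only announced as a plan, and it is exactly the step that does not follow from anything you cite: Strebel's theorem and Bieri's Proposition 9.22 require $K$ to be a $PD^n$-group, respectively a duality group, whereas here nothing is assumed about $H^i(K;\Z K)$ for $i<n$. Moreover, the identity you propose to run it with, $H^n(J;\Z)\cong D\otimes_{\Z J}\Z$, misapplies Shapiro's lemma: cohomological Shapiro gives $H^n(J;\Z)\cong H^n(K;\mathrm{Hom}_{\Z J}(\Z K,\Z))\cong D\otimes_{\Z K}\mathrm{Hom}_{\Z J}(\Z K,\Z)$, with the \emph{coinduced} module, and coinduced and induced modules agree only when $[K:J]<\infty$ -- precisely what you are trying to prove. (For $K=\Z$, $J=1$ your formula would give $H^1(1;\Z)\cong\Z$.) So as written the crux both lacks a proof and rests on a false identification.

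The paper avoids this entirely by never descending to $K$: it works with $\Gamma_\theta$ itself, which is of type $FP$ with $\mathrm{cd}(\Gamma_\theta)=n+1$ and with $\widehat D=H^{n+1}(\Gamma_\theta;\Z\Gamma_\theta)$ finitely generated (this is where the finite generation of $D$ enters, via the extension by the $PD^1$-group $\Z$). It then uses \emph{homological} Shapiro, $H_{n+1}(\varphi(\Gamma_\theta);\Z)\cong H_{n+1}(\Gamma_\theta;\Z\Gamma_\theta\otimes_{\Z\varphi(\Gamma_\theta)}\Z)$, together with the isomorphism (\ref{eq.uctD2}) to rewrite this as $\mathrm{Hom}_{\Z\Gamma_\theta}(\widehat D,\Z\Gamma_\theta\otimes_{\Z\varphi(\Gamma_\theta)}\Z)$, which is controlled by the invariants $(\Z\Gamma_\theta\otimes_{\Z\varphi(\Gamma_\theta)}\Z)^{\Gamma_\theta}$; these vanish whenever $[\Gamma_\theta:\varphi(\Gamma_\theta)]=\infty$, contradicting the non-vanishing of the top homology of $\varphi(\Gamma_\theta)\cong\Gamma_\theta$. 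In other words, the role you want a new Strebel theorem for non-duality groups to play is filled in the paper by the elementary vanishing of invariants of an induced module in top degree. If you want to salvage your route, you would have to prove your dichotomy for groups with merely finitely generated $H^n(K;\Z K)$, which is a genuinely open-ended task; the paper's argument is the shortcut you are missing.
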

\begin{proof}
Let $\varphi\colon\Gamma_\theta\to\Gamma_\theta$ be an injective endomorphism. Then $\Gamma_\theta\cong\varphi(\Gamma_\theta)$ and so $\mathrm{cd}(\Gamma_\theta)=\mathrm{cd}(\varphi(\Gamma_\theta))$. Since $K$ is of type $FP$ and $\Z\subseteq D$ (by (\ref{eq.uctD2}) and $H_n(K;\R)\neq0$), Theorem \ref{t:Feldman} yields
\begin{equation}\label{eq.cdequality}
\mathrm{cd}(\varphi(\Gamma_\theta))=\mathrm{cd}(\Gamma_\theta)=\mathrm{cd}(K)+\mathrm{cd}(\Z)=n+1.
\end{equation}
Since $\Gamma_\theta$ is of type $FP$ (because $K$ and $\Z$ are of type $FP$), we conclude that $\varphi(\Gamma_\theta)$ is of type $FP$. 
Now, $\Z\Gamma_\theta\otimes_{\Z\varphi(\Gamma_\theta)}\Z$ has a $\Gamma_\theta$-module structure, and there is a canonical isomorphism (cf. Shapiro's lemma~\cite[Ch. III, Prop. 6.2]{Bro})
\begin{equation}\label{eq.Fh}
H_{n+1}(\varphi(\Gamma_\theta);\Z)\cong H_{n+1}(\Gamma_\theta;\Z\Gamma_\theta\otimes_{\Z\varphi(\Gamma_\theta)}\Z).
\end{equation}
 Moreover, since $D$ is finitely generated and $\Z$ is a $PD^1$-group, we obtain by (\ref{eq.LHSiso}) and (\ref{eq.uctD}) that $\widehat D=H^{n+1}(\Gamma_\theta,\Z\Gamma_\theta)$ is finitely generated as an Abelian group. 
Hence, combining (\ref{eq.Fh}) with (\ref{eq.uctD2}), we obtain
\begin{equation}\label{eq.indexhomology2}
 \begin{aligned}
0\neq H_{n+1}(\varphi(\Gamma_\theta);\Z) &\cong H_{n+1}(\Gamma_\theta;\Z\Gamma_\theta\otimes_{\Z\varphi(\Gamma_\theta)}\Z)\\
                                    & \cong \mathrm{Hom}_{\Z\Gamma_\theta}(\widehat D,\Z\Gamma_\theta\otimes_{\Z\varphi(\Gamma_\theta)}\Z) \\
                                    & \cong\prod_{\rank{\widehat D}} \mathrm{Hom}_{\Z\Gamma_\theta}(\Z,\Z\Gamma_\theta\otimes_{\Z\varphi(\Gamma_\theta)}\Z)\\
                                     & =\prod_{\rank{\widehat D}} (\Z\Gamma_\theta\otimes_{\Z\varphi(\Gamma_\theta)}\Z)^{\Gamma_\theta}.
\end{aligned}
\end{equation}
If $[\Gamma_\theta:\varphi(\Gamma_\theta)]=\infty$, then the last term vanishes (see~\cite{Bie,Str} or~\cite[Ch. III.5]{Bro}). This contradiction shows that $[\Gamma:\varphi(\Gamma_\theta)]<\infty$. Since $\Gamma_\theta$ is finitely co-Hopfian, we derive that $\varphi$ is an isomorphism. 
\end{proof}

\begin{rem}
As a special case of the above, if $K$ is a $PD^n$-group, then $\Gamma_\theta$ is a $PD^{n+1}$-group, and the proof of Theorem \ref{t:co-Hopf} is an immediate consequence of Theorem \ref{t:Strebel}.
\end{rem}

 \section{Cofinitely Hopfian groups (Proof of Theorem \ref{t:maingroups})}

\begin{defn}
A group $\Gamma$ is called
{\em cofinitely Hopfian} if every endomorphism of $\Gamma$ onto a finite index subgroup is an automorphism.
\end{defn}

We are now ready to bring all pieces together to finish the proof of Theorem \ref{t:maingroups}. Our main result is the following characterisation, which encompasses Theorem \ref{t:maingroups}.

\begin{thm}\label{t:Hopf-type}
Suppose $K$ is a 
residually finite group of type $FP$, such that $\chi(K)\neq0$, $H_n(K;\R)\neq 0$ and $D=H^n(K,\Z K)$ is finitely generated, where $\mathrm{cd}(K)=n$. If $K$ satisfies condition {\normalfont(\ref{finiteness})}, then the following are equivalent for any mapping torus $\Gamma_\theta=K\rtimes_\theta\Z$.
\begin{itemize}
\item[(i)] $\Gamma_\theta$ is cofinitely Hopfian;
\item[(ii)] $\Gamma_\theta$ is finitely co-Hopfian;
\item[(iii)] $\Gamma_\theta$ is co-Hopfian;
\item[(iv)] $C(\Gamma_\theta)=1$;
\item[(v)] $\theta$ has infinite order in $\mathrm{Out}(K)$;
\item[(vi)]  $\Gamma_\theta$ is not virtually $K\times\Z$.
\end{itemize}
\end{thm}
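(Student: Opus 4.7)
The plan is to organize the six equivalences around the cycle
\[
(v) \Longrightarrow (ii) \Longrightarrow (iii) \Longrightarrow (iv) \Longrightarrow (v),
\]
with $(i) \Leftrightarrow (ii)$ and $(iv) \Leftrightarrow (vi)$ treated as side equivalences. First I would note that $K$ is torsion-free (since $\mathrm{cd}(K)<\infty$) and has trivial center: any non-trivial central element of $K$ would generate an infinite-cyclic central subgroup, and Theorem~\ref{t:eulerextensions} would force $\chi(K) = \chi(\Z)\cdot\chi(K/Z) = 0$, contradicting $\chi(K) \neq 0$. With $C(K) = 1$ in hand, Lemma~\ref{l:center} applies to $\Gamma_\theta$ and yields at once $(iv) \Leftrightarrow (v) \Leftrightarrow (vi)$, since items (i), (ii), (iii) of that lemma are the negations of (iv), (v), (vi) respectively.

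The implications $(v) \Rightarrow (ii)$ and $(ii) \Rightarrow (iii)$ are precisely the content of Theorem~\ref{t:center-finitecoHopf} (which feeds in condition~(\ref{finiteness})) and Theorem~\ref{t:co-Hopf} (which uses $H_n(K;\R) \neq 0$ together with finite generation of $D$). The reverse $(iii) \Rightarrow (ii)$ is trivial, so $(ii) \Leftrightarrow (iii)$. For $(i) \Leftrightarrow (ii)$, the direction $(i) \Rightarrow (ii)$ is immediate; conversely, $\Gamma_\theta$ is finitely generated, torsion-free, and residually finite as a split extension of a residually finite group by $\Z$, so Corollary~\ref{c:residuallyfiniteinjection} forces every endomorphism of $\Gamma_\theta$ onto a finite-index subgroup to be injective, and combined with (ii) this gives (i).

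The remaining and only genuinely new step is $(iii) \Rightarrow (iv)$, which I would argue by contraposition. Assume $C(\Gamma_\theta) \neq 1$. By Lemma~\ref{l:center} and its proof, there exist $m \geq 1$ and $\beta \in K$ with $\theta^m(\alpha) = \beta^{-1}\alpha\beta$ for all $\alpha \in K$ and with $\theta(\beta) = \beta$, so $u := \beta t^m$ is a central element of $\Gamma_\theta$ of infinite order. Define $\varphi \colon \Gamma_\theta \to \Gamma_\theta$ by $\varphi|_K = \id$ and $\varphi(t) = tu$. Centrality of $u$ preserves the relation $t\alpha t^{-1} = \theta(\alpha)$, so $\varphi$ is a well-defined homomorphism, and a direct calculation using $t\beta = \beta t$ gives
\[
\varphi(\alpha t^k) \;=\; \alpha \beta^k t^{k(m+1)}.
\]
Since $m+1 \geq 2$, this vanishes only when $k = 0$ and $\alpha = 1$, so $\varphi$ is injective; on the other hand, its image projects onto $(m+1)\Z$ in $\Gamma_\theta/K = \Z$, so $[\Gamma_\theta : \varphi(\Gamma_\theta)] = m+1 \geq 2$ and $\varphi$ is not surjective. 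Hence $\Gamma_\theta$ fails to be co-Hopfian.

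All of the analytical and cohomological work is absorbed by Theorems~\ref{t:center-finitecoHopf} and~\ref{t:co-Hopf}; at the level of this packaging, the only delicate point is the construction of $\varphi$, and in particular the verification that $u = \beta t^m$ is truly central, which is exactly what $C(K) = 1$ (via Lemma~\ref{l:center}) buys us.
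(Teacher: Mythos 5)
Your proof is correct and is essentially the paper's: it uses the same external inputs (Corollary \ref{c:residuallyfiniteinjection}, Theorem \ref{t:center-finitecoHopf}, Theorem \ref{t:co-Hopf}, Lemma \ref{l:center}) and the same explicit endomorphism $t\mapsto\beta t^{m+1}$, which the paper employs to show $\neg(\mathrm{v})\Rightarrow\neg(\mathrm{ii})$ and you employ to show $\neg(\mathrm{iv})\Rightarrow\neg(\mathrm{iii})$ — an immaterial repackaging of the same cycle. One small caveat: your derivation of $C(K)=1$ from $\chi(K)\neq0$ appeals to Theorem \ref{t:eulerextensions} for $1\to\Z\to K\to K/\Z\to1$, which requires $K/\Z$ to be of finite homological type (not verified); the clean justification — and the one the paper tacitly relies on, cf.\ Remark \ref{r:Wanggeneral} and Section \ref{ss:L2} — is Gottlieb's theorem, or the vanishing of all $L^2$-Betti numbers (hence of $\chi$) in the presence of an infinite amenable normal subgroup.
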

\begin{proof}

(i)$\Rightarrow$(ii) This follows by definition. 

(ii)$\Rightarrow$(i) Suppose $\varphi\colon\Gamma_\theta\to\Gamma_\theta$ is a homomorphism such that $[\Gamma_\theta:\varphi(\Gamma_\theta)]<\infty$. Since $K$ is residually finite, Corollary \ref{c:residuallyfiniteinjection} implies that $\varphi$ is injective and the conclusion follows.

(ii)$\Rightarrow$(iii) This is Theorem \ref{t:co-Hopf}, using the finiteness cohomological assumptions.

(iii)$\Rightarrow$(ii) This follows by definition. 

(ii)$\Rightarrow$(v) Suppose 
$\theta^m\in\mathrm{Inn}(K)$ for some $m>0$, i.e., there exists $\beta\in K$ such that $\theta^m(\alpha)=\beta^{-1}\alpha\beta$, for all $\alpha\in K$. Then 
$\Gamma_\theta$ is isomorphic to $\Gamma_{\theta^{m+1}}$. More precisely, there is an injective homomorphism
\begin{equation}\begin{array}{rcll}\label{eq:iso}
\varphi\colon\Gamma_\theta = K\rtimes_\theta\langle t\rangle&\overset{}\longrightarrow & K\rtimes_\theta\langle t\rangle&\\
 \alpha &\longmapsto & \alpha, \hspace{8pt} 
 \alpha\in K, &\\
  t&\longmapsto & \beta t^{m+1}. &
\end{array}\end{equation}
such that $\varphi(\Gamma_\theta)=\Gamma_{\theta^{m+1}}$ and $[\Gamma_\theta:\varphi(\Gamma_\theta)]=m+1>1$. Thus $\Gamma_\theta$ is not finitely co-Hopfian.

(v)$\Rightarrow$(ii) This is Theorem \ref{t:center-finitecoHopf}, using condition (\ref{finiteness}).

(iv)$\Leftrightarrow$(v)$\Leftrightarrow$(vi) This is Lemma \ref{l:center}, under the only assumption that $C(K)=1$.
\end{proof}

\section{The hyper-Hopf property and surface-by-cyclic groups}\label{ss:hyper-Hopf}

We will now extend Theorem \ref{t:Hopf-type} including  an additional equivalent property. Then, we will apply our results to (hyperbolic surface)-by-cyclic groups.

\subsection{The hyper-Hopf condition}

\begin{defn}\label{d:hyperHopf}
A group $\Gamma$ is called {\em hyper-Hopfian} if every endomorphism $\varphi$ of $\Gamma$ such that $\varphi(\Gamma)$ is normal in $\Gamma$ with cyclic quotient $\Gamma/\varphi(\Gamma)$ is an automorphism. It is called {\em finitely hyper-Hopfian} if moreover the quotient $\Gamma/\varphi(\Gamma)$ is required to be finite cyclic.
\end{defn}

A hyper-Hopfian group is clearly finitely hyper-Hopfian. We will see below that the two properties are equivalent for (hyperbolic surface)-by-cyclic groups.

The proof of Theorem \ref{t:center-finitecoHopf} shows that, if $\Gamma_\theta$ is finitely hyper-Hopfian, then it is finitely co-Hopfian. Thus we can extend Theorem \ref{t:Hopf-type} to include the finite hyper-Hopf property.

\begin{thm}\label{t:finite-hyperHopf}
Let $\Gamma_\theta$ be as in Theorem \ref{t:Hopf-type}. $\Gamma_\theta$ is finitely hyper-Hopfian if and only if it satisfies any of the six equivalent conditions of Theorem \ref{t:Hopf-type}.
\end{thm}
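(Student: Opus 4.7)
The plan is to slot the finite hyper-Hopf property into the equivalence chain of Theorem~\ref{t:Hopf-type} by establishing two implications: (i)~cofinitely Hopfian $\Rightarrow$ finitely hyper-Hopfian, and finitely hyper-Hopfian $\Rightarrow$ (ii)~finitely co-Hopfian. The first implication is essentially tautological: any endomorphism $\varphi$ whose image is normal with finite cyclic quotient has, in particular, image of finite index in $\Gamma_\theta$, so the cofinite Hopf property forces $\varphi$ to be an automorphism.

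For the converse I will exploit the structural observation already embedded in the proof of Theorem~\ref{t:center-finitecoHopf}. Given an injective endomorphism $\varphi\colon\Gamma_\theta\to\Gamma_\theta$ with $[\Gamma_\theta:\varphi(\Gamma_\theta)]=d<\infty$, the Euler-characteristic bookkeeping carried out there---combining $\chi(K)\neq 0$, multiplicativity of $\chi$ under finite covers (Theorem~\ref{t:eulercoverings}), and the isomorphism $\varphi|_K\colon K\to\varphi(K)$---forces $K\cap\varphi(\Gamma_\theta)=K$. By Lemma~\ref{l:observations}(2) this yields
\[
\varphi(\Gamma_\theta)=K\rtimes_{\theta^d}\Z=\Gamma_{\theta^d}.
\]
This subgroup is automatically normal in $\Gamma_\theta$---its image in $\Gamma_\theta/K\cong\Z$ is $d\Z$---and the quotient $\Gamma_\theta/\Gamma_{\theta^d}\cong\Z/d\Z$ is finite cyclic. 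Hence the finite hyper-Hopf hypothesis applies directly to $\varphi$ and forces $\varphi$ to be an automorphism. Thus $\Gamma_\theta$ is finitely co-Hopfian, and the remaining equivalences follow from Theorem~\ref{t:Hopf-type}.

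The main obstacle, such as it is, is purely one of bookkeeping: I need to verify that the portion of the proof of Theorem~\ref{t:center-finitecoHopf} which produces the normal subgroup $\Gamma_{\theta^d}$ with finite cyclic quotient uses only $\chi(K)\neq 0$ (together with Lemma~\ref{l:observations}), and does \emph{not} rely on condition~(\ref{finiteness}). Condition~(\ref{finiteness}) enters only in the final paragraph of that proof, where the relation $\theta^d=\xi\theta\xi^{-1}$ modulo $\mathrm{Inn}(K)$ is used to rule out $d>1$; in the present argument that final step is instead replaced by invoking the finite hyper-Hopf hypothesis on the normal subgroup $\Gamma_{\theta^d}$. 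No new technical ingredient is required beyond this rearrangement.
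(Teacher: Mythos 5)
Your proposal is correct and follows essentially the same route as the paper: the easy direction is the same tautology, and the substantive direction reuses the Euler-characteristic portion of the proof of Theorem~\ref{t:center-finitecoHopf} (which indeed does not invoke condition~(\ref{finiteness})) to identify $\varphi(\Gamma_\theta)=\Gamma_{\theta^d}$ as a normal subgroup with quotient $\Z_d$; the paper merely states this contrapositively, assuming $\Gamma_\theta$ is not finitely co-Hopfian and concluding it is not finitely hyper-Hopfian.
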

\begin{proof}
Any cofinitely Hopfian group is clearly finitely hyper-Hopfian. For the converse, suppose $\Gamma_\theta$ is not finitely co-Hopfian. Let $\varphi\colon\Gamma_\theta\to\Gamma_\theta$ be an injective endomorphism with $1<[\Gamma_\theta:\varphi(\Gamma_\theta)]=d<\infty$. We repeat the steps of the proof of Theorem \ref{t:center-finitecoHopf} to show that $\varphi(\Gamma_\theta)=K\rtimes_{\theta^d}\Z$. Thus $\varphi(\Gamma_\theta)$ is normal and $\Gamma_\theta/\varphi(\Gamma_\theta)\cong\Z_d$. Hence $\Gamma_\theta$ is not finitely hyper-Hopfian.
\end{proof}

\subsection{Surface-by-cyclic groups}\label{ss:surface-by-Z}

Given a finitely generated group $G$, the {\em translation length} of an element $\gamma\in G$, defined by Gersten and Short~\cite{GS}, is given by
\begin{equation}\label{translation}
\tau(\gamma)=\liminf_{n\to\infty}\frac{d(\gamma^n)}{n}\geq0,
\end{equation}
where $d(\cdot)$ denotes the minimal word length with respect to a fixed finite generating set for $G$. The translation length satisfies the following properties~\cite{GS}:
\begin{itemize}
\item $\tau(\gamma)=\tau(\delta\gamma\delta^{-1})$, for all $\gamma,\delta\in G$;
\item $\tau(\gamma^n)=n\tau(\gamma)$, for all $\gamma\in G$, $n\in\N$.
\end{itemize}
Hence, the following result by Farb, Lubotzky and Minsky~\cite{FLM} implies that condition (\ref{finiteness}) is true for aspherical surfaces.

\begin{thm}[Farb-Lubotzky-Minsky]\label{t:FLM}
Every element $\theta$ of infinite order in the mapping class group of a closed aspherical surface has $\tau(\theta)>0$.
\end{thm}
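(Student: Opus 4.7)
The plan is to combine the Nielsen-Thurston classification with an auxiliary geometric invariant that is coarsely Lipschitz with respect to the word metric on $\mathrm{Mod}(\Sigma)$ and grows linearly along $\theta$-orbits. Since $\tau(\theta^q)=q\tau(\theta)$ for every $q\in\N$, I would first replace $\theta$ by a fixed positive power so that it is \emph{pure}: it fixes the isotopy class of each curve in its canonical Thurston reduction system $\mathcal{R}$ and restricts to either the identity or a pseudo-Anosov homeomorphism on each component of $\Sigma\setminus\mathcal{R}$. Since $\theta$ has infinite order, at least one of the following cases must occur: (a) $\theta$ restricts to a genuine pseudo-Anosov on some component $Y\subseteq\Sigma\setminus\mathcal{R}$, or (b) $\theta$ contains a non-trivial power of a Dehn twist along some $\gamma\in\mathcal{R}$.

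In case (a), the key input is the theorem of Masur-Minsky that pseudo-Anosov classes act loxodromically on the curve graph $\mathcal{C}(Y)$, and in particular with strictly positive asymptotic translation length on $\mathcal{C}(Y)$. Fix any essential simple closed curve $\alpha$ in $Y$, and let $\pi_Y$ denote the subsurface projection to $\mathcal{C}(Y)$. The orbit map
\[
\phi\longmapsto\pi_Y(\phi(\alpha))\in\mathcal{C}(Y)
\]
is Lipschitz from the word metric on $\mathrm{Mod}(\Sigma)$ into $\mathcal{C}(Y)$, because each element of a fixed finite generating set displaces $\alpha$ by a bounded amount in $\mathcal{C}(Y)$. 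Hence
\[
d_{\mathcal{C}(Y)}\bigl(\alpha,\theta^{n}(\alpha)\bigr)\le L\cdot d(\theta^{n}),
\]
while the left-hand side grows linearly in $n$ by loxodromicity; dividing by $n$ and letting $n\to\infty$ gives $\tau(\theta)>0$.

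Case (b) is handled by the same template, replacing $\mathcal{C}(Y)$ by the annular curve graph $\mathcal{C}(\gamma)$ around $\gamma$. A non-trivial power of the Dehn twist $T_\gamma$ translates linearly in $\mathcal{C}(\gamma)$, and annular projection is coarsely Lipschitz in the word metric, so the same chain of inequalities yields $\tau(\theta)>0$.

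The main technical obstacle is justifying the coarse Lipschitz control of the (sub)surface projection under multiplication by a single generator of $\mathrm{Mod}(\Sigma)$; this rests on compactness of a fixed finite generating set together with the bounded geodesic image theorem of Masur-Minsky. A more elementary but incomplete alternative in case (a) would use the dilatation $\lambda(\theta)>1$ via $\log\lambda(\theta^n)=n\log\lambda(\theta)$ combined with Lipschitz-ness of $\log\lambda$ under word multiplication; however, that approach does not obviously address case (b), where the infinite-order component of $\theta$ is only a twist, and so the subsurface-projection viewpoint appears essentially unavoidable.
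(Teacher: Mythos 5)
The paper itself offers no proof of this statement: it is quoted directly from Farb--Lubotzky--Minsky \cite{FLM}, so there is nothing internal to compare against. Your outline is, in substance, the standard route to their theorem (and close to the original one): pass to a pure power, and in both the pseudo-Anosov-component case and the twist case play a linearly growing translation in a curve graph $\mathcal{C}(Y)$ or annular graph $\mathcal{C}(\gamma)$ against a coarse Lipschitz upper bound coming from the word metric, using $\tau(\theta^q)=q\,\tau(\theta)$ to return to $\theta$.

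The genuine gap sits exactly at the step you flag as the crux, and the tools you invoke do not fill it. The map $\phi\mapsto\pi_Y(\phi(\alpha))$ is not shown to be coarsely Lipschitz by noting that each generator displaces $\alpha$ boundedly in $\mathcal{C}(Y)$: writing $\theta^n=s_{1}\cdots s_{k}$ with $k=d(\theta^n)$ and telescoping, equivariance of subsurface projections turns the $j$-th increment into $d_{Z_j}(\alpha,s_{j}\alpha)$ for a subsurface $Z_j=(s_1\cdots s_{j-1})^{-1}(Y)$ that changes with $j$; what you need is a bound on $d_{Z}(\alpha,s\alpha)$ that is uniform over \emph{all} subsurfaces $Z$, not just over the fixed $Y$. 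That uniform bound is true, but it comes from the elementary estimate bounding projection distances by intersection numbers (for instance $d_{Z}(x,y)\le 2\,i(x,y)+1$ when both curves cut $Z$, with an annular analogue), or equivalently from Masur--Minsky's fact that projections of \emph{markings} are coarsely Lipschitz combined with the equivariant quasi-isometry between the mapping class group and the marking complex; it does not follow from ``compactness of the generating set,'' and the bounded geodesic image theorem plays no role here. Two smaller repairs: $\pi_Y(\phi(\alpha))$ may be undefined when $\phi(\alpha)$ misses $Y$ (replacing the single curve $\alpha$ by a complete marking removes this), and the statement concerns closed \emph{aspherical} surfaces, so the torus (and Klein bottle) are included; your Nielsen--Thurston and curve-graph machinery does not apply there, but those cases are immediate because the mapping class groups are virtually free, hence hyperbolic, and infinite-order elements of hyperbolic groups have positive stable translation length. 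With these corrections your argument does go through and yields exactly the theorem of \cite{FLM}.
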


\begin{rem}
\label{r:free-by-cyclic}
An analogous result for infinite order outer automorphisms of a non-elementary free group $F_r$ was proved by Alibegovi\'c~\cite{Ali}. In fact, Alibegovi\'c proved that the translation length of an infinite order outer automorphism $\theta\in\mathrm{Out}(F_r)$ is bounded away from zero, and this is what Bridson, Groves, Hillman and Martin~\cite{BGHM} used in their study of Hopf-type properties for free-by-cyclic groups. Theorem \ref{t:center-finitecoHopf} tells us that the positivity of $\tau(\theta)$ suffices to deduce the finite co-Hopf property for $F_r\rtimes_\theta\Z$. Moreover, observe that the proof  in Theorem \ref{t:finite-hyperHopf}, that finitely hyper-Hopfian implies finitely co-Hopfian, does not use any of the finiteness cohomological conditions on top degree. However, Theorem \ref{t:co-Hopf} does not apply to free-by-cyclic groups, as clearly $D$ is not finitely generated when $K=F_r$. 
Indeed, a finitely co-Hopfian free-by-cyclic group is not necessarily co-Hopfian~\cite[Remark 6.4]{BGHM}. 
Hence, we deduce that all -- but the co-Hopf -- conditions of Theorem \ref{t:finite-hyperHopf} are equivalent for free-by-cyclic groups, which is what was proved in~\cite[Theorem B]{BGHM} and included moreover the hyper-Hopf property. It is therefore natural to ask whether the hyper-Hopf condition is also equivalent to the seven conditions of Theorem \ref{t:finite-hyperHopf}. We will see below that this is true for surface-by-cyclic groups, showing that~\cite[Lemma 6.1]{BGHM} extends over surface-by-cyclic groups. 
\end{rem}

\begin{lem}\label{l:BGHMsurface}
Let $\Sigma_g$ be a closed hyperbolic surface of genus $g$ and  $\Gamma_\theta= \pi_1(\Sigma_g)\rtimes_\theta\Z$ be the mapping torus of an automorphism $\theta\colon\pi_1(\Sigma_g)\to\pi_1(\Sigma_g)$. If $\varphi\colon\Gamma_\theta\to\Gamma_\theta$ is an endomorphism such that $\varphi(\Gamma_\theta)$ is normal in $\Gamma_\theta$ and $\Gamma_\theta/\varphi(\Gamma_\theta)\cong\Z$, then $\Gamma_\theta\cong\pi_1(\Sigma_g)\times\Z$. In particular, $\Gamma_\theta$ is not finitely hyper-Hopfian.
\end{lem}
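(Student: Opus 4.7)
The plan is to show that $\Gamma_\theta$ splits as a direct product of a closed surface group with $\Z$, from which the ``in particular'' clause follows by exhibiting a concrete witness to the failure of the finite hyper-Hopf property. Write $K := \pi_1(\Sigma_g)$ and $N := \varphi(\Gamma_\theta)$. The argument has three phases: (I) identify $N$ as a closed surface fiber group; (II) realize $\Gamma_\theta$ as $N \rtimes_\sigma \Z$ in a ``new'' mapping torus presentation; (III) show the resulting monodromy $\sigma$ is inner and invoke Lemma~\ref{l:center}.

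For Phase (I), note that $N$ is finitely generated as the image of the finitely generated group $\Gamma_\theta$, and $\Gamma_\theta = \pi_1(E_h)$ where $E_h = \Sigma_g \rtimes_h S^1$ is a closed $3$-manifold. By Stallings' fibered theorem, a finitely generated normal subgroup of a compact $3$-manifold group with infinite cyclic quotient is the fundamental group of the fiber of a fibration over $S^1$; hence $N \cong \pi_1(\Sigma_{g'})$ for some $g' \geq 1$. The torus case $g' = 1$ is excluded as follows: if $N \cong \Z^2$, then either $N = K$ (forcing $K$ abelian, contradicting $g \geq 2$), or $K \cap N$ is a nontrivial abelian normal subgroup of $K$; the latter is impossible because the non-elementary hyperbolic group $K$ admits no nontrivial abelian normal subgroup. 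Hence $g' \geq 2$.

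For Phase (II), the extension $1 \to N \to \Gamma_\theta \to \Z \to 1$ splits (since $\Z$ is free), yielding a presentation $\Gamma_\theta = N \rtimes_\sigma \Z$ for some $\sigma \in \mathrm{Aut}(N)$; fix a generator $t' \in \Gamma_\theta$ of the complementary $\Z$ and set $\gamma' := \varphi(t') \in N$. For Phase (III), the restriction $\psi := \varphi|_N$ maps $N$ into itself (as $\varphi(N) \subseteq \varphi(\Gamma_\theta) = N$), and $\psi(N) = \varphi(N)$ is a finitely generated normal subgroup of the non-elementary hyperbolic group $N = \pi_1(\Sigma_{g'})$; thus $\psi(N)$ is either trivial or of finite index. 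The trivial case would force $\varphi$ to factor through $\Gamma_\theta/N \cong \Z$, making the non-cyclic $N = \varphi(\Gamma_\theta)$ cyclic. In the finite-index case, if $h := \mathrm{genus}(\psi(N)) = d(g'-1)+1$ with $d = [N : \psi(N)]$, then the surjection on abelianizations $\Z^{2g'} \twoheadrightarrow \Z^{2h}$ induced by $\psi$ forces $g' \geq h$, hence $d = 1$ and $\psi(N) = N$; Mal'cev's theorem (Theorem~\ref{t:Malcev}) applied to the residually finite $N$ then upgrades the surjective endomorphism $\psi$ to an automorphism. Applying $\varphi$ to the defining relation $t' \alpha (t')^{-1} = \sigma(\alpha)$ yields $\gamma' \psi(\alpha) (\gamma')^{-1} = \psi(\sigma(\alpha))$, i.e., $\psi \circ \sigma$ equals conjugation by $\gamma'$ followed by $\psi$; since $\psi$ is invertible, $\sigma$ equals conjugation by $\psi^{-1}(\gamma') \in N$, so $\sigma \in \mathrm{Inn}(N)$.

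Because $N$ has trivial center, the implication (ii)$\Rightarrow$(iii) of Lemma~\ref{l:center} applied with $m = 1$ to the presentation $\Gamma_\theta = N \rtimes_\sigma \Z$ delivers the desired isomorphism $\Gamma_\theta \cong N \times \Z \cong \pi_1(\Sigma_{g'}) \times \Z$. For the ``in particular'' clause, the endomorphism $(x, n) \mapsto (x, 2n)$ of $N \times \Z$ has image $N \times 2\Z$, a normal subgroup of index two with finite cyclic quotient $\Z/2\Z$, yet it manifestly fails to be surjective; hence $\Gamma_\theta$ is not finitely hyper-Hopfian. The principal obstacle in this plan is Phase (I), the structural identification of $N$ as a closed surface group: it relies on Stallings' fibered theorem (a classical external input) together with the small auxiliary argument ruling out $g' = 1$. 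Once $N$ is known to be a non-elementary surface group, the remainder is a clean redeployment of the techniques from the proof of Theorem~\ref{t:center-finitecoHopf}, combined with the Hopf property for surface groups.
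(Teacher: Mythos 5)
Your argument is correct in substance, but it reaches the surface--group identification of $N=\varphi(\Gamma_\theta)$ by a genuinely different route than the paper. The paper stays on the group--theoretic side of 3--manifold theory: Scott's coherence theorem makes the finitely generated image finitely presentable, the Hochschild--Serre/Strebel/Bieri--Johnson--Wall arguments show it is a $PD^2$-group, and Eckmann's theorem identifies it as a surface group; you instead invoke Stallings' fibration theorem for the closed aspherical $3$-manifold $E_h$, which produces the closed surface fiber in one step (more specifically three-dimensional, but perfectly legitimate here, and it spares you the $PD^2$ machinery). After that the bookkeeping diverges: the paper restricts $\varphi$ to the original fiber $K=\pi_1(\Sigma_g)$, uses the trivial-or-finite-index property of finitely generated normal subgroups of surface groups (Griffiths, or $L^2$-Betti numbers) together with Euler-characteristic comparisons and Hopficity to show $\psi=\varphi|_K\colon K\to\varphi(\Gamma_\theta)$ is an isomorphism, and then writes down the product isomorphism $(\psi^{-1}\circ\varphi,\pi)$ directly; you restrict $\varphi$ to $N$ itself, prove $\varphi|_N\in\mathrm{Aut}(N)$ by the same dichotomy, genus count and Mal'cev, deduce that the monodromy of the splitting $\Gamma_\theta=N\rtimes_\sigma\Z$ is inner, and then quote Lemma~\ref{l:center} with $m=1$. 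Both routes rest on the same two facts about hyperbolic surface groups (no nontrivial finitely generated normal subgroup of infinite index; Hopficity), so they are comparable in depth; yours is a clean alternative, and your explicit witness $(x,n)\mapsto(x,2n)$ for the failure of the finite hyper-Hopf property is exactly what the ``in particular'' clause needs.

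Three small points to tighten. First, in excluding $g'=1$ your dichotomy misses the case $K\cap N=1$; it should be disposed of by noting that then $N$ would embed in $\Gamma_\theta/K\cong\Z$, impossible for $\Z^2$ (alternatively: $N$ virtually abelian would make $\Gamma_\theta$ virtually solvable, contradicting $F_2\leq K$). Second, if $\theta$ is induced by an orientation-reversing homeomorphism, $E_h$ is non-orientable and Stallings only yields a closed, possibly non-orientable, fiber; you must then also exclude the Klein bottle (same argument as for $\Z^2$) and allow $N$ to be a non-orientable hyperbolic surface group --- your Phases (II)--(III) go through verbatim for these, since only non-elementarity, Hopficity, the abelianization count via $\chi$, and triviality of the center are used. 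Third, like the paper's own proof (which quietly renormalizes $g$ to be minimal in the surface-by-cyclic decomposition), what you actually establish is $\Gamma_\theta\cong\pi_1(S)\times\Z$ for a closed hyperbolic surface $S$, not literally for $\Sigma_g$ with the given $g$; this is all that is required for the ``in particular'' clause and for the application in Theorem~\ref{t:surface}, so it is not a defect of your argument relative to the paper's, but it is worth flagging.
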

\begin{proof}
$\Gamma_\theta$ is the fundamental group of an aspherical surface bundle over the circle and so it is a $PD^3$-group. By Scott's theorem for all 3-manifold groups~\cite{Sco} (see also~\cite{Sta}), $\Gamma_\theta$ is coherent, that is, every finitely generated subgroup of $\Gamma_\theta$ is finitely presentable. We deduce that the finitely generated image $\varphi(\Gamma_\theta)$ is finitely presentable. Now we have a short exact sequence
\begin{equation}\label{eq:cd}
1\longrightarrow\varphi(\Gamma_\theta)\longrightarrow\Gamma_\theta\longrightarrow\Z\longrightarrow1,
\end{equation}
where $\Gamma_\theta$ and $\Z$ are Poincar\'e Duality groups and $\varphi(\Gamma_\theta)$ is finitely presentable. Thus by~\cite{Bie,JW} we deduce that $\varphi(\Gamma_\theta)$ is a $PD^2$-group over $\Z$, 
which implies that $\varphi(\Gamma_\theta)$ is a surface group~\cite{Eck}. (Note that to deduce $\mathrm{cd}(\varphi(\Gamma_\theta))=2$ we do not need that $\varphi(\Gamma_\theta)$ is finitely presentable; in particular we do not need that $\Gamma_\theta$ is coherent: Since $\Gamma_\theta$ is a $PD^3$-group and $\varphi(\Gamma_\theta)$ has infinite index in $\Gamma_\theta$, Theorem~\ref{t:Strebel} tells us that $\mathrm{cd}(\varphi(\Gamma_\theta))\leq2$. But $\mathrm{cd}(\Gamma_\theta)\leq\mathrm{cd}(\varphi(\Gamma_\theta))+1$ by the Hochschild-Serre spectral exact sequence for (\ref{eq:cd})~\cite{HS}. Hence, $\mathrm{cd}(\varphi(\Gamma_\theta))=2$.) 
We can thus assume that $\varphi(\Gamma_\theta)\cong\pi_1(\Sigma_h)$, where $\Sigma_h$ is a closed hyperbolic surface of genus $h$. Furthermore, we can assume that the genus $g$ of $\Sigma_g$ is minimal in the decomposition of $\Gamma_\theta$ as a surface-by-cyclic group, so $h\geq g$. In other words,
\begin{equation}\label{eq:surface1}
2h-2
=|\chi(\Sigma_h)|\geq|\chi(\Sigma_g)|
=2g-2.
\end{equation}

Now $\varphi(\pi_1(\Sigma_g))$ is finitely generated normal in the surface group $\pi_1(\Sigma_h)\cong\varphi(\Gamma_\theta)$. A result of Griffiths~\cite{Gri} implies that either $\varphi(\pi_1(\Sigma_g))=1$ or $\varphi(\pi_1(\Sigma_g))$ has finite index in $\varphi(\Gamma_\theta)$. (This can also be deduced from the fact that $\pi_1(\Sigma_h)$ has non-vanishing first $L^2$-Betti number, equal to $|\chi(\Sigma_g)|$;~\cite{Gabo}.) Clearly $\varphi(\pi_1(\Sigma_g))\neq1$, otherwise $\varphi(\Gamma_\theta)$ would be cyclic.  Thus  $\varphi(\pi_1(\Sigma_g))\cong\pi_1(\Sigma_r)$ for some hyperbolic surface of genus $r$. In particular, $r\geq h$, i.e.,
\begin{equation}\label{eq:surface2}
|\chi(\Sigma_r)|\geq|\chi(\Sigma_h)|
\end{equation}
Moreover, we have a homomorphism between surface groups
\[
\varphi|_{\pi_1(\Sigma_g)}\colon\pi_1(\Sigma_g)\to\varphi(\pi_1(\Sigma_g))\cong\pi_1(\Sigma_r).
\]
The Hurewicz map induces a surjective homomorphism $H_1(\Sigma_g)\to H_1(\Sigma_r)$. This means $g\geq r$, i.e.,
\begin{equation}\label{eq:surface3}
|\chi(\Sigma_g)|\geq|\chi(\Sigma_r)|
\end{equation}

 Combining (\ref{eq:surface1}), (\ref{eq:surface2}) and  (\ref{eq:surface3}) we deduce that $\Sigma_g$, $\Sigma_h$ and $\Sigma_r$ are homeomorphic. Since moreover surface groups are Hopfian, we deduce that the restriction
 \[
 \psi:=\varphi|_{\pi_1(\Sigma_g)}\colon\pi_1(\Sigma_g)\to\varphi(\Gamma_\theta)
 \]
is an isomorphism. Hence
\[
(\psi^{-1}\circ\varphi,\pi)\colon\Gamma_\theta\to\pi_1(\Sigma_g)\times\Z
\]
is an isomorphism and the lemma follows.
\end{proof}

We now have the following characterisation for surface-by-cyclic groups.

\begin{thm}\label{t:surface}
Let $K$ be the fundamental group of a closed hyperbolic surface. 
The following are equivalent for any mapping torus $\Gamma_\theta=K\rtimes_\theta\Z$.
\begin{itemize}
\item[(i)] $\Gamma_\theta$ is cofinitely Hopfian;
\item[(ii)] $\Gamma_\theta$ is finitely co-Hopfian;
\item[(iii)] $\Gamma_\theta$ is co-Hopfian;
\item[(iv)] $C(\Gamma_\theta)=1$;
\item[(v)] $\theta$ has infinite order in $\mathrm{Out}(K)$;
\item[(vi)]  $\Gamma_\theta$ is not virtually $K\times\Z$;
\item[(vii)] $\Gamma_\theta$ is finitely hyper-Hopfian;
\item[(viii)] $\Gamma_\theta$ is hyper-Hopfian.
\end{itemize}
\end{thm}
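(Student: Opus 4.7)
The plan is to verify that $K=\pi_1(\Sigma_g)$ (with $g\geq 2$) fits the hypotheses of Theorems \ref{t:Hopf-type} and \ref{t:finite-hyperHopf}, so that those results immediately yield the equivalence of (i)--(vii); then to upgrade the finite hyper-Hopf property (vii) to the full hyper-Hopf property (viii) using Lemma \ref{l:BGHMsurface}.

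First, I would check each hypothesis of Theorem \ref{t:Hopf-type} for $K=\pi_1(\Sigma_g)$: residual finiteness is classical; $K$ is of type $FP$ as a surface group; $\chi(K)=2-2g\neq 0$; $H_2(K;\R)=\R\neq 0$; and since $K$ is a $PD^2$-group, $D=H^2(K;\Z K)\cong\Z$ is finitely generated. The only non-trivial point is condition (\ref{finiteness}), which I would derive from Theorem \ref{t:FLM} as follows. Suppose $\theta^d=\xi\theta\xi^{-1}$ in $\mathrm{Out}(K)$ for some $d>1$. Using the conjugation invariance and the scaling property of the translation length $\tau$ on $\mathrm{Out}(K)$, one gets
\[
d\,\tau(\theta)=\tau(\theta^d)=\tau(\xi\theta\xi^{-1})=\tau(\theta),
\]
so $(d-1)\tau(\theta)=0$ and hence $\tau(\theta)=0$. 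By Theorem \ref{t:FLM}, the class of $\theta$ must have finite order in $\mathrm{Out}(K)$, i.e.\ $\theta^q\in\mathrm{Inn}(K)$ for some $q>1$. With (\ref{finiteness}) in hand, Theorems \ref{t:Hopf-type} and \ref{t:finite-hyperHopf} give the equivalence of (i)--(vii).

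It remains to integrate condition (viii). The implication (viii)$\Rightarrow$(vii) is immediate from the definitions, since a hyper-Hopfian group is in particular finitely hyper-Hopfian. For the converse, assume any (hence all) of the equivalent conditions (i)--(vii) and let $\varphi\colon\Gamma_\theta\to\Gamma_\theta$ be an endomorphism whose image is normal with cyclic quotient $\Gamma_\theta/\varphi(\Gamma_\theta)$. I would split into two cases according to whether the quotient is finite or infinite cyclic. If it is finite cyclic, then (vii) applies directly and $\varphi$ is an automorphism. If the quotient is $\Z$, then Lemma \ref{l:BGHMsurface} forces $\Gamma_\theta\cong\pi_1(\Sigma_g)\times\Z$, which contradicts condition (vi) (equivalently (iv) or (v)); so this case cannot occur. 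Hence $\varphi$ is always an automorphism, proving (viii).

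The only non-routine step is the translation-length computation used to verify (\ref{finiteness}); once this is observed, the theorem reduces to bookkeeping plus the single new input provided by Lemma \ref{l:BGHMsurface}, which is what lets us close the gap between the finite and general hyper-Hopf properties in the surface case (a gap that remains open for the free-by-cyclic case, as discussed in Remark \ref{r:free-by-cyclic}).
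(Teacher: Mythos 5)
Your proposal is correct and follows essentially the same route as the paper: verify the hypotheses of Theorem \ref{t:Hopf-type} (with condition (\ref{finiteness}) coming from Theorem \ref{t:FLM} via the conjugation-invariance and scaling of translation length), invoke Theorems \ref{t:Hopf-type} and \ref{t:finite-hyperHopf} for (i)--(vii), and close the loop (vii)$\Leftrightarrow$(viii) with Lemma \ref{l:BGHMsurface} by ruling out the infinite cyclic quotient case. The only difference is that you spell out the hypothesis checks and the case split on the cyclic quotient, which the paper leaves implicit.
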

\begin{proof}
Let $K=\pi_1(\Sigma)$, where $\Sigma$ is a closed surface of genus $g>1$. $K$ is residually finite, $PD^2$-group with $\chi(K)\neq0$, and satisfies condition (\ref{finiteness}) by Theorem \ref{t:FLM}.
Theorems \ref{t:Hopf-type}  and  \ref{t:finite-hyperHopf}  imply that the first seven conditions are equivalent. Clearly (viii)$\Rightarrow$(vii), and, by Lemma \ref{l:BGHMsurface}, (vii)$\Rightarrow$(viii).
\end{proof}

\section{$L^2$-Betti numbers and Lefschetz numbers}\label{ss:L2}

\subsection{$L^2$-Betti numbers}

Given a group $\Gamma$ of finite type, the {\em $i$-th $L^2$-Betti number} of $\Gamma$ is defined as the von Neumann dimension of the Hilbert space $\ell^2H^i(B\Gamma)$
 associated to the action of $\Gamma$ on $B\Gamma$. Note that the $L^2$-Betti numbers can be defined for more general group actions~\cite[Definition 6.50]{Luebook}. Nevertheless, since we are dealing with residually finite groups, the reader may refer to L\"uck's  approximation theorem~\cite{Lue1}: If $\Gamma$ is a residually finite group of finite type and $\{\Gamma_k\}_{k\in\N}$ is a residual chain for $\Gamma$, then the $i$-th $L^2$-Betti number of $\Gamma$ is given by
\begin{equation}
b_i^{(2)}(\Gamma)=\lim_{k\to\infty}\frac{\dim H_i(\Gamma_k;\R)}{[\Gamma:\Gamma_k]}.
\end{equation}

By the additivity of the von Neumann dimensions (see~\cite[Theorem 1.35(2)]{Luebook}), we have
\begin{equation}\label{eq.euler-L2}
\chi(\Gamma)=  \sum_i(-1)^ib_i^{(2)}(\Gamma).
\end{equation}

In contrast to the ordinary Betti numbers, $L^2$-Betti numbers are multiplicative under finite index subgroups, i.e., if $\Delta$ is a finite index subgroup of $\Gamma$, then  by~\cite[Theorem 1.35(9)]{Luebook}
\begin{equation}
b_i^{(2)}(\Delta)=[\Gamma:\Delta]b_i^{(2)}(\Gamma).
\end{equation}

Therefore, any argument in the proof of Theorem \ref{t:maingroups} that uses the multiplicativity of the Euler characteristic under finite index subgroups can be carried out if one assumes the non-vanishing of some $L^2$-Betti number of $K$. In addition, the existence of an infinite amenable normal subgroup in $K$ implies the vanishing of all $L^2$-Betti numbers of $K$ (and thus of the Euler characteristic).   By (\ref{eq.euler-L2}) if $\chi(K)\neq0$, then clearly there is some $b_i^{(2)}(K)\neq0$, however the converse is not necessarily true. Singer's conjecture~\cite{Sin} predicts that if $M$ is a closed aspherical manifold of even dimension $2n$, then only $b_{n}^{(2)}(\pi_1(M))$ might not be zero and so $\chi(M)=(-1)^{n} b_{n}^{(2)}(\pi_1(M))$. Thus the conditions $\chi\neq0$ and $b_n^{(2)}\neq0$ might be in fact equivalent for aspherical manifolds. Hence, we have the following  statement which is at least as strong as Theorem \ref{t:finite-hyperHopf}.

\begin{thm}
\label{t:addendum}
Let $K$ be as in Theorem \ref{t:finite-hyperHopf} where instead of $\chi(K)\neq0$, we assume that $b_i^{(2)}(K)\neq0$ for some $i$. Then the following are equivalent for any mapping torus $\Gamma_\theta=K\rtimes_\theta\Z$.
\begin{itemize}
\item[(i)] $\Gamma_\theta$ is cofinitely Hopfian;
\item[(ii)] $\Gamma_\theta$ is finitely co-Hopfian;
\item[(iii)] $\Gamma_\theta$ is co-Hopfian;
\item[(iv)] $C(\Gamma_\theta)=1$;
\item[(v)] $\theta$ has infinite order in $\mathrm{Out}(K)$;
\item[(vi)] $\Gamma_\theta$ is not virtually $K\times\Z$;
\item[(vii)] $\Gamma_\theta$ is finitely hyper-Hopfian.
\end{itemize}
\end{thm}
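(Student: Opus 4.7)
The plan is to imitate the proof of Theorem \ref{t:finite-hyperHopf} verbatim, substituting an $L^2$-Betti number for the Euler characteristic at the single step where multiplicativity under finite-index subgroups is invoked. The implications $(i)\Leftrightarrow(ii)$ via Corollary \ref{c:residuallyfiniteinjection}, $(ii)\Leftrightarrow(iii)$ via Theorem \ref{t:co-Hopf}, the trivial $(ii)\Rightarrow(v)$, $(iv)\Leftrightarrow(v)\Leftrightarrow(vi)$ via Lemma \ref{l:center}, and $(vii)\Leftrightarrow(ii)$ via the argument of Theorem \ref{t:finite-hyperHopf} rest solely on residual finiteness of $K$, on the cohomological finiteness hypotheses ($H_n(K;\R)\neq 0$ and $D$ finitely generated), and on condition (\ref{finiteness}); none of them invokes $\chi(K)\neq 0$, so they all transplant unchanged.

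The only step that genuinely uses the Euler characteristic is $(v)\Rightarrow(ii)$, namely Theorem \ref{t:center-finitecoHopf}. Given an injective endomorphism $\varphi$ with $[\Gamma_\theta:\varphi(\Gamma_\theta)]=d<\infty$, that argument deduces $K\cap\varphi(\Gamma_\theta)=K$ from the chain
\[
\chi(K)=\chi(\varphi(K))=\chi(K\cap\varphi(\Gamma_\theta))=[K:K\cap\varphi(\Gamma_\theta)]\,\chi(K),
\]
where the first equality follows from $\varphi|_K$ being an isomorphism onto $\varphi(K)$, the second from the commutative diagram (\ref{eq:commutecyclicimage}) identifying $\varphi(K)$ with $K\cap\varphi(\Gamma_\theta)$ inside $\varphi(\Gamma_\theta)$, and the third from multiplicativity of $\chi$ under finite index. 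Fixing now an index $i$ with $b_i^{(2)}(K)\neq 0$, I would rerun this chain with $b_i^{(2)}$ in place of $\chi$: invariance of $L^2$-Betti numbers under isomorphism handles the first equality; the commutative diagram identification is purely group-theoretic, hence independent of the chosen invariant, giving the second; and the multiplicativity formula $b_i^{(2)}(\Delta)=[\Gamma:\Delta]\,b_i^{(2)}(\Gamma)$ recorded in Section \ref{ss:L2} gives the third. Since $b_i^{(2)}(K)\neq 0$, this forces $[K:K\cap\varphi(\Gamma_\theta)]=1$, after which the iteration producing $\varphi^m(\Gamma_\theta)=K\rtimes_{\theta^{d^m}}\Z$ and the final appeal to condition (\ref{finiteness}) proceed as in the original proof.

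The hypothesis that $K$ is of type $FP$ ensures that its finite-index subgroups are again of type $FP$, so the $L^2$-Betti multiplicativity formula does apply; beyond that, no new ingredient is needed. The main subtlety I anticipate is simply verifying that the commutative-diagram step of Theorem \ref{t:center-finitecoHopf} is genuinely invariant-agnostic, so that the same identification underlying the Euler-characteristic argument continues to work in the $L^2$ setting; since that identification is expressed entirely in terms of subgroups of $\varphi(\Gamma_\theta)$ and the injectivity of $\varphi$, this is a bookkeeping matter rather than a genuine obstacle.
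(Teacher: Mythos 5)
Your strategy is exactly the paper's own: the paper justifies this theorem by the single observation that every appeal to $\chi(K)\neq 0$ in the earlier arguments is through isomorphism-invariance and multiplicativity under finite-index subgroups, properties shared by $b_i^{(2)}$, so Theorem \ref{t:center-finitecoHopf} (hence (v)$\Rightarrow$(ii)) survives the substitution verbatim; your rerun of the chain with $b_i^{(2)}$ in place of $\chi$ is precisely what is intended. Two points of bookkeeping are off, one of them a genuine (if small) gap. First, the direction (vii)$\Rightarrow$(ii) is \emph{not} independent of the numerical hypothesis: the proof of Theorem \ref{t:finite-hyperHopf} identifies the image of a non-surjective injective finite-index endomorphism with $K\rtimes_{\theta^d}\Z$ by repeating the steps of Theorem \ref{t:center-finitecoHopf}, i.e.\ it uses the same multiplicativity argument; it still transplants, but only because of the same $L^2$ substitution you carry out for (v)$\Rightarrow$(ii), not because the step avoids $\chi$. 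Second, and more substantively, the equivalences (iv)$\Leftrightarrow$(v)$\Leftrightarrow$(vi) rest on Lemma \ref{l:center}, whose hypothesis is $C(K)=1$; this does \emph{not} follow from residual finiteness, the cohomological finiteness assumptions, or condition (\ref{finiteness}) (e.g.\ $K=\Z$ satisfies all of these), but is a consequence of the numerical hypothesis in both versions of the theorem. So you must re-derive it here: $K$ is torsion-free since $\mathrm{cd}(K)<\infty$, so a non-trivial center would contain an infinite cyclic, hence infinite amenable, normal subgroup, which would force all $L^2$-Betti numbers of $K$ to vanish --- the fact recorded in Section \ref{ss:L2} --- contradicting $b_i^{(2)}(K)\neq 0$. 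With that one line added, your proof is complete and coincides with the paper's.
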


Nevertheless, we have chosen to use the Euler characteristic instead of the $L^2$-Betti numbers in our statements/proofs, because, assuming the non-vanishing of the Euler characteristic, we were able to find evidence for the validity of condition (\ref{finiteness}) for all aspherical manifolds, which we provide below. Of course, an alternative approach to the one we present below is to show positivity of a translation length-type distance for infinite order outer automorphisms, as in Section \ref{ss:surface-by-Z}.

\subsection{Lefschetz numbers}\label{ss:Lefschetz}

\begin{defn}\label{d:lefschetz}
Let $K$ be a group of finite homological type with $\mathrm{cd}(K)=n$. Given a homomorphism $\theta\colon K\to K$, we define the {\em Lefschetz number} of $\theta$ by
\begin{equation}\label{eq.L}
\Lambda(\theta)=\sum_{i=1}^n (-1)^i\mathrm{tr}(H_i(\theta;\R)),
\end{equation}
where $H_i(\theta;\R)\colon H_i(K;\R)\to H_i(K;\R)$ are the induced homomorphisms and $\mathrm{tr}(\cdot)$ denotes the trace.
\end{defn}

Consider $H_*(K;\R)$ as the vector space $(\oplus_i H_{2i}(K;\R))\oplus(\oplus_i H_{2i+1}(K;\R))$ and the induced by $\theta$ homomorphism  $H_*(\theta;\R)$ as $(\oplus_iH_{2i}(\theta;\R))\oplus(\oplus_i H_{2i+1}(\theta;\R))$. If $\lambda_j$ is an eigenvalue of  $H_*(\theta;\R)$, we define the corresponding generalised eigenspaces by
\[
E_{2i}^{\lambda_j}=\bigcup_{p\geq1}\ker(\oplus_i H_{2i}(\theta;\R)-\lambda_j)^p
\ \text{and} \ E_{2i+1}^{\lambda_j}=\bigcup_{p\geq1}\ker(\oplus_i H_{2i+1}(\theta;\R)-\lambda_j)^p
\]

\begin{defn}
\label{d:l-euler}
The {\em $\lambda_j$-Euler characteristic} of $\theta$ is defined by $\chi_j(\theta)=\dim(E_{2i}^{\lambda_j})-\dim(E_{2i+1}^{\lambda_j})$. We call $\lambda_j$ {\em essential} if $\chi_j(\theta)\neq0$.
\end{defn}

Note that the $\lambda_j$-Euler characteristic does not change under passing to iterates of $\theta$. Thus we can write
\begin{equation}\label{eq.l-euleriterate}
\chi_j=\chi_j(\theta^l), \ \forall l\geq1.
\end{equation}

The following result is of independent interest, and gives a connection between the Euler characteristic of a group $K$ and the Lefschetz number of an iterate of a certain
automorphism of $K$.

\begin{thm}[Theorem \ref{t:finitenessEuler-Lefschetz}]\label{t:Lefschetz}
Let $K$ be a group of finite homological type, $\mathrm{cd}(K)<\infty$, and $\chi(K)\neq0$.
Suppose $\theta\colon K\to K$ is an automorphism which is  
 conjugate to $\theta^d$ in $\mathrm{Out}(K)$ for some $d>1$. 
 Then there exists an integer $q_m=q(\chi(K),d,m)>1$ such that $\chi(K)=\Lambda(\theta^{q_m})$,  for all $m\geq1$.
 \end{thm}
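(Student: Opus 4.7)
The plan is to pass to the induced map on real homology, where inner automorphisms disappear, and then to show that the resulting spectrum consists entirely of roots of unity. Write $V=\bigoplus_{i} H_{i}(K;\R)$, which is finite-dimensional because $\mathrm{cd}(K)<\infty$ and $K$ has finite homological type, and set $T=H_{*}(\theta;\R)\in\mathrm{GL}(V)$; $T$ is invertible because $\theta$ is an automorphism. Since inner automorphisms induce the identity on group homology, the hypothesis that $\theta^{d}$ is conjugate to $\theta$ in $\mathrm{Out}(K)$ translates into a relation $T^{d}=STS^{-1}$ in $\mathrm{GL}(V)$, where $S=H_{*}(\xi;\R)$ preserves the homological grading. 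In particular $T^{d}$ and $T$ are similar, so they share the same multiset of eigenvalues, with matching generalized-eigenspace dimensions in every degree.

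The key step is to conclude that every $\lambda\in\mathrm{Spec}(T)$ is a root of unity. Invertibility of $T$ gives $\mathrm{Spec}(T)\subset\C^{*}$, and the multiset equality $\mathrm{Spec}(T^{d})=\mathrm{Spec}(T)$ says that $\lambda\mapsto\lambda^{d}$ permutes the finite set $\mathrm{Spec}(T)$. Iterating from any $\lambda$, the sequence $\lambda,\lambda^{d},\lambda^{d^{2}},\dots$ lies in this finite set, so by pigeonhole $\lambda^{d^{a}}=\lambda^{d^{b}}$ for some $0\le a<b$, giving $\lambda^{d^{a}(d^{b-a}-1)}=1$. Let $N\ge1$ be the least common multiple of the orders of the (finitely many) eigenvalues of $T$.

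Finally, compute $\Lambda$ via the generalized-eigenspace decomposition of $V$, which is compatible with the homological grading because $T$ respects that grading. A trace computation degree by degree gives
\[
\Lambda(\theta^{q})\;=\;\sum_{\lambda\in\mathrm{Spec}(T)}\chi_{\lambda}\,\lambda^{q},
\]
with $\chi_{\lambda}$ as in Definition \ref{d:l-euler}; specializing at $q=0$ recovers $\chi(K)=\sum_{\lambda}\chi_{\lambda}$. For every positive multiple $q$ of $N$ we have $\lambda^{q}=1$ for all $\lambda$, so $\Lambda(\theta^{q})=\chi(K)$. Setting $q_{m}:=(m+1)N\ge2$ for each $m\ge1$ produces the required integers $q_{m}>1$ with $\Lambda(\theta^{q_{m}})=\chi(K)$. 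The only non-formal ingredient is the pigeonhole step forcing eigenvalues to be roots of unity; the hypothesis $\chi(K)\neq0$ does not enter the equality itself but only ensures that the conclusion is non-vacuous and connects back to the discussion of condition $(\ast)$.
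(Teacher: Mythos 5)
Your argument is correct, and at the decisive step it takes a genuinely different route from the paper. The paper never passes to the similarity of matrices: it works only with the scalar identities $\Lambda(\theta^{\mu d^m})=\Lambda(\theta^{\mu})$, restricts attention to the essential eigenvalues (those with $\chi_j\neq 0$, the only ones visible in $\Lambda(\theta^{l})=\sum_j\chi_j\lambda_j^{l}$), splits the weights $\chi_j$ by sign so that both sides of $\sum_j\chi_j(\lambda_j^{d^m})^{\mu}=\sum_j\chi_j\lambda_j^{\mu}$ become honest power sums of $\omega=\sum_j|\chi_j|$ numbers, and then invokes the determinacy of a finite multiset by its power sums to force the essential $\lambda_j$ to be roots of unity. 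You instead use that inner automorphisms act trivially on $H_*(K;\R)$ to obtain the actual conjugacy $T^{d}=STS^{-1}$, so that $\lambda\mapsto\lambda^{d}$ permutes $\mathrm{Spec}(T)$ and pigeonhole makes every eigenvalue, essential or not, a root of unity; the final trace identity $\Lambda(\theta^{q})=\sum_{\lambda}\chi_{\lambda}\lambda^{q}$ together with $\sum_{\lambda}\chi_{\lambda}=\chi(K)$ is common to both proofs. Your route avoids the Newton/power-sum bookkeeping and the sign-splitting, yields the slightly stronger intermediate statement that the whole spectrum is torsion, and produces a single modulus $N$ such that every multiple of $N$ exceeding $1$ works, rather than a separate $q_m$ extracted from $\theta^{\mu d^m}=\xi^{m}\theta^{\mu}\xi^{-m}$ for each $m$. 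One small caveat: your $q_m=(m+1)N$ depends on the spectrum of $T$, not only on $(\chi(K),d,m)$ as the notation $q(\chi(K),d,m)$ suggests; but the paper's own $q_m=q(d^m,\omega)$ has the same feature, since $\omega$ is not determined by $\chi(K)$, so this is a looseness of the statement's notation rather than a gap in your proof. Your closing observation that $\chi(K)\neq0$ is not needed for the equality itself, only for the intended application to condition $(\ast)$, is likewise consistent with the paper's argument.
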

\begin{proof}
Let $\lambda_1,...,\lambda_k$ be the essential eigenvalues of $H_*(\theta)$. Since $\theta$ is an automorphism, $H_*(K;\R)=\oplus_{j=1}^kE_{*}^{\lambda_j}$,
 and $\chi_j$ does not depend on the powers of $\theta$ by (\ref{eq.l-euleriterate}), 
we obtain 
\begin{equation}\label{eq.euler=sumofeigenvauleEuler}
\begin{aligned}
0\neq\chi(K) &=\sum_{i=0}^{\mathrm{cd}(K)}(-1)^i\dim H_i(K;\R)\\
&=\sum_{i \ \text{even}}\dim H_i(K;\R)-\sum_{i \ \text{odd}}\dim H_i(K;\R)\\
&=\sum_{j=1}^k(\dim(E_{2i}^{\lambda_j})-\dim(E_{2i+1}^{\lambda_j}))\\
&=\sum_{j=1}^k\chi_j.
\end{aligned}
\end{equation}
Moreover, using again (\ref{eq.l-euleriterate}), we observe that 
\begin{equation}\label{eq.lefschetziterate}
\Lambda(\theta^l)=\sum_{j=1}^k\chi_j\lambda_j^l, \ \forall l\geq1.
\end{equation}
Suppose now that there is a $\xi\in\mathrm{Aut}(K)$ and $d>1$ such that $\theta^d=\xi\theta\xi^{-1}$ in $\mathrm{Out}(K)$. Thus $\theta^{\mu d^m}=\xi^m\theta^\mu\xi^{-m}$  in $\mathrm{Out}(K)$,  and so $\Lambda(\theta^{\mu d^m})=\Lambda(\theta^\mu)$, for all $m\geq1$, $\mu\geq1$. 
Hence, (\ref{eq.lefschetziterate}) implies
\begin{equation*}\label{eq.iteratesl-euler}
\sum_{j=1}^k\chi_j(\lambda_j^{d^m})^\mu=\sum_{j=1}^k\chi_j\lambda_j^\mu, \ \forall m\geq1, \mu\geq 1,
\end{equation*}
i.e., 
\begin{equation}\label{eq.iteratesl-euler2}
 \underbrace{(\lambda_1^{d^m})^\mu+\cdots+(\lambda_1^{d^m})^{\mu}}_{\chi_1}+\cdots+ \underbrace{(\lambda_k^{d^m})^\mu+\cdots+(\lambda_k^{d^m})^{\mu}}_{\chi_k}= \underbrace{\lambda_1^{\mu}+\cdots+\lambda_1^{\mu}}_{\chi_1}+\cdots+ \underbrace{\lambda_k^{\mu}+\cdots+\lambda_k^{\mu}}_{\chi_k}
\end{equation}
for all $m\geq1, \mu\geq 1$. 
We can rewrite (\ref{eq.iteratesl-euler2}) as
\begin{equation}\label{eq.iterates-symmetric}
\delta_{1,1}^\mu+\cdots+\delta_{1,|\chi_1|}^\mu+\cdots+\delta_{k,1}^\mu+\cdots+\delta_{k,|\chi_k|}^\mu=\widetilde\delta_{1,1}^\mu+\cdots+\widetilde\delta_{1,|\chi_1|}^\mu+\cdots+\widetilde\delta_{k,1}^\mu+\cdots+\widetilde\delta_{k,|\chi_k|}^\mu, \ \forall 
\mu\geq 1,
\end{equation}
where for all $j=1,...,k$ 
\[
(\delta_{j,1}=\cdots=\delta_{j,|\chi_j|}=\lambda_j^{d^m} \ \text{and} \ \widetilde\delta_{j,1}=\cdots=\widetilde\delta_{j,|\chi_j|}=\lambda_j), \ \text{if} \ \chi_j>0
\]
or
\[
(\delta_{j,1}=\cdots=\delta_{j,|\chi_j|}=\lambda_j \ \text{and} \ \widetilde\delta_{j,1}=\cdots=\widetilde\delta_{j,|\chi_j|}=\lambda_j^{d^m}), \ \text{if} \ \chi_j<0.
\]

Note that, if we set $\omega_+:=\sum_{\chi_j>0}\chi_j$ and $\omega_-:=\sum_{\chi_j<0}\chi_j$, then
(\ref{eq.euler=sumofeigenvauleEuler}) is written as
\[
\chi(K)=\sum_{j=1}^k\chi_j=\sum_{\chi_j>0}\chi_j+\sum_{\chi_j<0}\chi_j=\omega_++\omega_-.
\]
In other words, the number of summands in (\ref{eq.iterates-symmetric}) depends on $\chi(K)$ and it is given by $\omega=\omega_+-\omega_-$.

Since $N$ numbers $a_1,...,a_N$ are determined up to order by the $\mu$-th power symmetric functions $a_1^\mu+\cdots+a_N^\mu$, $1\leq\mu\leq N$, we deduce that the two sides of (\ref{eq.iterates-symmetric}) contain the same terms. Hence, there exists $q_m=q(d^m,\omega)>1$ such that
$\lambda_j^{q_m}=1$ for all $j=1,...,k$, and all $m\geq1$, and therefore, by (\ref{eq.euler=sumofeigenvauleEuler}) and (\ref{eq.lefschetziterate}), we obtain 
\begin{equation}\label{eq.lefschetziterate=euler}
\Lambda(\theta^{q_m})=\sum_{j=1}^k\chi_j=\chi(K). 
\end{equation}
\end{proof}

We suspect that Theorem \ref{t:Lefschetz} combined with (a variation of) the additional finiteness conditions in cohomology, the Euler characteristic of the fixed point subgroup of conjugates of $\theta^{q_m}$ and the proof of Theorem \ref{t:co-Hopf} should imply finiteness of the order of $\theta$ in $\mathrm{Out}(K)$ (condition \ref{finiteness}).
However, Theorem \ref{t:Lefschetz} alone does not imply that $\theta$ has finite order in $\mathrm{Out}(K)$, as shown in the following example. 

\begin{ex}\label{ex:positiveEuler-infiniteorder1}
Let $G=BS(1,d)=\langle x,y \ | \ yxy^{-1}=x^d\rangle$
be the $(1,d)$-Baumslag-Solitar group  with $d>1$, and $F_2=\langle u,v\rangle$ be the free group on two generators. Let
\[
K=G\ast F_2
\]
be the free product of $G$ and $F_2$.
Then $\mathrm{cd}(K)=2$ and $\chi(K)=-2$. We define an automorphism $\theta\colon K\to K$ by
\[
\theta(x)=x, \ \theta(y)=xyx^{-1}, \ \theta(u)=u,  \ \theta(v)=v,
\]
and an automorphism $\xi\colon K\to K$ by 
\[
\xi(x)=yxy^{-1}, \ \xi(y)=y, \ \xi(u)=u, \ \xi(v)=v.
\]
Then $\theta^d=\xi\theta\xi^{-1}$ and 
$\theta$ 
has infinite order in $\mathrm{Out}(K)$. Note that for all $q>1$,
\[
\mathrm{Fix}(\theta^{q-1})=\mathrm{Fix}(\theta)=\langle x\rangle\ast\langle u,v\rangle=F_3,
\]
and $\chi(\mathrm{Fix}(\theta^{q-1}))=\Lambda(\theta^{q-1})=\chi(K)=-2$, verifying in particular Theorem \ref{t:Lefschetz}. Also, $H_2(K;\R)\cong H^2(K;\R)=0$ and $[K:\mathrm{Fix}(\theta)]=\infty$. We can easily modify this example so that the real homology in top degree is isomorphic to $\R$.
Let 
\[
L=K\ast\Z^2,
\]
where $K$ is as above. 
We define automorphisms $\hat{\theta}$ and $\hat{\xi}$ on $L$ by
\[
 \hat{\theta}|_K=\theta,  \ \hat{\theta}|_{\Z^2}=\mathrm{id} \ \text{and} \ \hat{\xi}|_K=\xi, \  \hat{\xi}|_{\Z^2}=\mathrm{id},
\]
where $\theta$ and $\xi$ are given as above. 
Then $\hat{\theta}^d=\hat\xi\hat\theta\hat\xi^{-1}$ and 
$\hat\theta$ 
has infinite order in $\mathrm{Out}(L)$. Moreover, for all $q>1$,
\[
\mathrm{Fix}(\hat\theta^{q-1})=\mathrm{Fix}(\hat\theta)=\langle x\rangle\ast F_2\ast\Z^2=F_3\ast\Z^2,
\]
and $\chi(\mathrm{Fix}(\hat\theta^{q-1}))=\Lambda(\hat\theta^{q-1})=\chi(L)=-3$, verifying Theorem \ref{t:Lefschetz}. In addition, we have $H_2(L;\R)\cong H^2(L;\R)\cong\R$. However, $H^2(L;\Z L)$ is not finitely generated. 
\end{ex}

\section{Aspherical manifolds (Proof of Theorem \ref{t:mainmanifolds})}

\subsection{Aspherical manifolds}\label{ss:aspherical}

\begin{defn}\label{d:aspherical}
A finite CW-complex $X$ is called {\em aspherical} if all its higher homotopy groups vanish, i.e. $\pi_n(X)=1$ for all $n\geq 2$.
\end{defn}

We refer to~\cite{Lue3} for a survey on aspherical manifolds.

\begin{thm}[Homotopy classification of aspherical spaces]\label{t:aspherical}
Given two connected aspherical CW-complexes $X$ and $Y$,  
there is a bijection from the set of homotopy classes of maps from X to Y to 
the set of group homomorphisms from $\pi_1(X)$ to $\pi_1(Y)$  $\mathrm{mod \ Inn}(\pi_1(Y))$, sending the homotopy type of a map $f\colon X\to Y$ to the class of the induced homomorphism $f_*\colon\pi_1(X)\to\pi_1(Y)$. In particular, $X$ and $Y$ are homotopy equivalent if and only if their fundamental groups are isomorphic.
\end{thm}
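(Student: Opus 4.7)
The plan is to exploit the fact that a connected aspherical CW-complex is precisely a $K(\pi,1)$ space, so that both $X$ and $Y$ have vanishing higher homotopy groups, and then to build maps and homotopies by obstruction theory on the skeleta of $X$.

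For surjectivity, given a homomorphism $\varphi\colon\pi_1(X)\to\pi_1(Y)$, I would construct $f\colon X\to Y$ inductively over the skeleta $X^{(n)}$ of $X$ (after choosing basepoints). On $X^{(1)}$ the map is defined by sending each 1-cell, read as a generator of $\pi_1(X^{(1)})$, to a loop in $Y$ representing the image under $\varphi$; extendability to $X^{(2)}$ is guaranteed because every relation in $\pi_1(X)$ (boundary of a 2-cell) maps to a null-homotopic loop in $Y$. To extend from $X^{(n)}$ to $X^{(n+1)}$ for $n\geq 2$, the obstruction is a class in $H^{n+1}(X;\pi_n(Y))$, which vanishes because $Y$ is aspherical. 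The map $f$ obtained this way satisfies $f_*=\varphi$ up to the inner automorphism coming from the choice of path from $f(x_0)$ to the basepoint $y_0$, so its class is well-defined modulo $\mathrm{Inn}(\pi_1(Y))$.

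For injectivity, suppose $f_0,f_1\colon X\to Y$ induce homomorphisms that agree modulo $\mathrm{Inn}(\pi_1(Y))$. After adjusting one of the maps by a suitable basepoint-changing homotopy, I may assume $(f_0)_*=(f_1)_*$ as pointed homomorphisms. The two maps then define a map $F\colon X\times\partial I\to Y$, and building a homotopy amounts to extending $F$ over $X\times I$. The obstructions to this extension, built cell by cell, live in $H^{n+1}(X\times I,\, X\times\partial I;\pi_n(Y))\cong H^{n}(X;\pi_n(Y))$, and vanish for $n\geq 2$ by asphericity of $Y$; the $n=1$ case is handled directly since $(f_0)_*=(f_1)_*$ means the map on 1-skeleta already extends across the 1-cells of $X\times I$ up to homotopy rel endpoints. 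This gives a well-defined inverse to the assignment $[f]\mapsto [f_*]$.

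The \emph{in particular} clause then follows formally: if $\varphi\colon\pi_1(X)\to\pi_1(Y)$ is an isomorphism, realize $\varphi$ by $f\colon X\to Y$ and $\varphi^{-1}$ by $g\colon Y\to X$; the compositions $g\circ f$ and $f\circ g$ induce the identity on fundamental groups modulo inner automorphisms, and by the just-proved bijection applied to self-maps of $X$ and $Y$ respectively, each is homotopic to the identity, so $f$ is a homotopy equivalence. The converse is immediate from functoriality of $\pi_1$. The main obstacle in this program is not any single cohomological computation but rather the careful bookkeeping of basepoints: the inner-automorphism ambiguity in the statement is precisely the freedom to choose a path in $Y$ from $f(x_0)$ to $y_0$, and one must set up the obstruction-theoretic arguments so that this ambiguity, and only this ambiguity, appears.
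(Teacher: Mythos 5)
The paper does not prove this statement at all: Theorem \ref{t:aspherical} is quoted as the classical homotopy classification of aspherical ($K(\pi,1)$) complexes, with the surrounding discussion deferring to standard references (e.g.\ the survey \cite{Lue3}; the result appears in textbooks such as Hatcher's). Your obstruction-theoretic argument is precisely the standard textbook proof, and it is correct: realizing $\varphi$ skeleton by skeleton works because the attaching maps of $2$-cells go to null-homotopic loops and $\pi_n(Y)=0$ kills all higher extension problems (indeed at the cochain level, so no cohomological computation is even needed), and the uniqueness-up-to-homotopy step is the relative extension over $X\times I$ with obstructions in groups with coefficients $\pi_n(Y)=0$ for $n\geq 2$. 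Two small bookkeeping points you should make explicit if you write this out in full: on the $1$-skeleton you should first send all $0$-cells to the basepoint (or collapse a maximal tree) so that ``$1$-cells as generators of $\pi_1(X^{(1)})$'' makes sense, and in the homotopy step the paths chosen over the $0$-cells must be chosen compatibly (constant at the basepoint, extended over a maximal tree) so that the only condition on the remaining $1$-cells is exactly $(f_0)_*=(f_1)_*$; also, the obstruction groups are in principle cohomology with local coefficients, which is immaterial here since the coefficient groups vanish. You also correctly isolate the basepoint/inner-automorphism ambiguity, which is the only genuinely delicate part of the statement. So there is no gap; your route is the standard one, and the paper simply cites the result rather than proving it.
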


\begin{cor}\label{c:degreeinjection}
Let $X$ be a closed oriented aspherical manifold. If $\varphi\colon\pi_1(X)\to\pi_1(X)$ is an injective endomorphism, 
then $f:=B\varphi\colon X\to X$ has degree $\pm[\pi_1(X):\varphi(\pi_1(X))]<\infty$.
\end{cor}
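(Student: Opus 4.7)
The plan is to proceed in two steps: first show that $H := \varphi(\pi_1(X))$ has finite index in $\pi_1(X)$ via a top-dimensional homology obstruction, then compute the degree by lifting $f$ to the associated finite cover.

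For the finite-index claim, let $p\colon \tilde X \to X$ be the covering associated to the subgroup $H \subseteq \pi_1(X)$. Then $\tilde X$ is a connected manifold, aspherical (as a cover of an aspherical space), and $\pi_1(\tilde X)\cong H$. Because $\varphi$ is injective onto $H$, there is an isomorphism $H\cong\pi_1(X)$, so by Theorem~\ref{t:aspherical} the aspherical CW-complexes $\tilde X$ and $X$ are homotopy equivalent. In particular,
\[
H_n(\tilde X;\Z)\cong H_n(X;\Z)=\Z.
\]
If the index $[\pi_1(X):H]$ were infinite, then $\tilde X$ would be a connected non-compact $n$-manifold, forcing $H_n(\tilde X;\Z)=0$. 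This contradiction yields $[\pi_1(X):H]<\infty$.

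For the degree computation, observe that the homomorphism $f_*=\varphi\colon\pi_1(X)\to\pi_1(X)$ has image contained in $p_*(\pi_1(\tilde X))=H$. The lifting criterion for covering spaces then produces a map $\tilde f\colon X\to\tilde X$ with $p\circ\tilde f=f$ and $\tilde f_*=\varphi$ (viewed as an isomorphism onto $\pi_1(\tilde X)=H$). By Theorem~\ref{t:aspherical}, $\tilde f$ is a homotopy equivalence between closed oriented $n$-manifolds, so $\deg(\tilde f)=\pm1$. Since the covering $p$ has degree $[\pi_1(X):H]$, multiplicativity of degree yields
\[
\deg(f)=\deg(p)\cdot\deg(\tilde f)=\pm[\pi_1(X):\varphi_*(\pi_1(X))].
\]

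No step presents a serious obstacle; the only subtlety is the top-dimensional homology argument, which hinges on the fact that a closed aspherical $n$-manifold has $H_n\cong\Z$ while any infinite cover, being a non-compact $n$-manifold, has vanishing top homology. Everything else is standard aspherical/covering space yoga together with the homotopy classification of aspherical spaces already recalled in Theorem~\ref{t:aspherical}.
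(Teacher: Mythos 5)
Your proof is correct, but the first step takes a genuinely different route from the paper. Where you establish $[\pi_1(X):\varphi(\pi_1(X))]<\infty$ by passing to the cover $\widetilde X$ associated to the image, noting it is homotopy equivalent to $X$ by Theorem~\ref{t:aspherical}, and playing $H_n(\widetilde X;\Z)\cong\Z$ against the vanishing of top homology for a connected non-compact $n$-manifold, the paper argues purely algebraically: injectivity gives $\mathrm{cd}(\varphi(\pi_1(X)))=\mathrm{cd}(\pi_1(X))=n$, and Strebel's theorem~\cite{Str} (infinite-index subgroups of a $PD^n$-group have cohomological dimension at most $n-1$) forces finite index. Your topological argument is more elementary in that it avoids Strebel and only uses standard covering-space and manifold facts, at the cost of being tied to the manifold setting; the paper's cohomological-dimension argument is the one that carries over to the group-theoretic results elsewhere in the paper (e.g.\ the $PD^n$/type $FP$ arguments in the proof of Theorem~\ref{t:co-Hopf}), which is presumably why it is the route chosen there. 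Your second step -- lifting $f$ to $\widetilde f\colon X\to\widetilde X$, concluding $\widetilde f$ is a homotopy equivalence of degree $\pm1$ via Theorem~\ref{t:aspherical}, and multiplying by $\deg(p)=[\pi_1(X):\varphi(\pi_1(X))]$ -- is exactly the paper's computation.
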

\begin{proof}
Since $\varphi$ is injective, it is an isomorphism onto its image, which implies $\mathrm{cd}(\pi_1(X))=\mathrm{cd}(\varphi(\pi_1(X)))$. By Theorem~\ref{t:Strebel}, we deduce that $$d=[\pi_1(X):\varphi(\pi_1(X))]<\infty.$$ Hence, there is a finite covering $p\colon\widetilde X\to X$ of degree $d$ and a lift $\widetilde f\colon X\to\widetilde X$ such that $f=p\circ \widetilde f$. Since $\varphi$ is injective, we conclude that $\widetilde f_*$ is an isomorphism. Theorem \ref{t:aspherical} implies that $\widetilde f$ is a homotopy equivalence; in particular, it has degree $\pm1$. Thus 
$$\deg(f)=\deg(\widetilde f)\deg(p)=\pm d.$$
\end{proof}

\subsection{Finishing the proof of Theorem \ref{t:mainmanifolds}}

We will now finish the first, main part of Theorem \ref{t:mainmanifolds}.

\begin{thm}\label{t:mainmanifoldshomotopy}
Let $F$ be a closed aspherical manifold $F$ with non-zero Euler characteristic and residually finite fundamental group, and $h$ be a homeomorphism of $F$. If $\pi_1(F)$ satisfies condition {\normalfont(\ref{finiteness})}, then the following are equivalent:
\begin{itemize}
\item[(i)] $C(\pi_1(E_h))=1$;
\item[(ii)] Every endomorphism of $\pi_1(E_h)$ onto a finite index subgroup induces a 
homotopy equivalence on $E_h$;
\item[(iii)] Every injective endomorphism of $\pi_1(E_h)$ induces a homotopy equivalence on $E_h$;
\item[(iv)] Every self-map of $E_h$ of non-zero degree is a homotopy equivalence. 

\end{itemize}
\end{thm}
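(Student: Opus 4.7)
The plan is to reduce Theorem \ref{t:mainmanifoldshomotopy} to the purely group-theoretic classification of Theorem \ref{t:Hopf-type} via the homotopy classification of aspherical spaces. First, I would observe that since $F$ is a closed aspherical manifold and $E_h$ fibers over $S^1$ with fiber $F$, the total space $E_h$ is itself aspherical. The group $K=\pi_1(F)$ is then a $PD^n$-group with $n=\dim F$, hence of type $FP$, with finitely generated dualising module $D=H^n(K;\mathbb{Z}K)$ containing $\mathbb{Z}$ and with $H_n(K;\mathbb{R})\neq 0$ (passing, if necessary, to the orientation double cover to handle the non-orientable case, since $\chi(F)\neq 0$ forces $\dim F$ even). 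Together with $\chi(K)=\chi(F)\neq 0$, residual finiteness of $K$, and condition (\ref{finiteness}), this places the mapping torus $\pi_1(E_h)=K\rtimes_{h_*}\mathbb{Z}$ squarely within the scope of Theorem \ref{t:Hopf-type}.

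Theorem \ref{t:Hopf-type} then furnishes, at the purely group-theoretic level, the equivalences of $C(\pi_1(E_h))=1$ with $\pi_1(E_h)$ being cofinitely Hopfian and with $\pi_1(E_h)$ being co-Hopfian. To convert these into the topological statements (ii) and (iii), I would invoke Theorem \ref{t:aspherical}: every endomorphism of $\pi_1(E_h)$ is realised, up to conjugation, by a self-map of $E_h$ unique up to homotopy, and this self-map is a homotopy equivalence precisely when the endomorphism is an automorphism. Hence (i) is the center condition directly, (ii) is equivalent to the cofinite Hopf property, and (iii) is equivalent to the co-Hopf property, with the equivalences (i)$\Leftrightarrow$(ii)$\Leftrightarrow$(iii) following from Theorem \ref{t:Hopf-type}.

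The only remaining link is (iv). For (ii)$\Rightarrow$(iv), given a self-map $f\colon E_h\to E_h$ of non-zero degree, the classical covering-space argument shows that $f_*(\pi_1(E_h))$ has finite index in $\pi_1(E_h)$ (indeed the index divides $|\deg(f)|$), so (ii) forces $f_*$ to be an automorphism, and Theorem \ref{t:aspherical} then forces $f$ to be a homotopy equivalence. For the converse (iv)$\Rightarrow$(ii), given an endomorphism $\varphi$ of $\pi_1(E_h)$ whose image has finite index, Corollary \ref{c:residuallyfiniteinjection} ensures that $\varphi$ is injective, and Corollary \ref{c:degreeinjection} realises $\varphi$ as $f_*$ for a self-map $f$ of degree $\pm[\pi_1(E_h):\varphi(\pi_1(E_h))]\neq 0$; applying (iv) then closes the loop.

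The main obstacle is conceptually mild but demands careful bookkeeping: one must ensure that the translations between algebra and topology preserve injectivity, finite-index image, and non-zero degree simultaneously. The crucial input on the topological side is Corollary \ref{c:degreeinjection}, which converts injectivity of a $\pi_1$-endomorphism into the existence of a self-map of non-zero degree; the crucial input on the algebraic side is Corollary \ref{c:residuallyfiniteinjection}, which lets residual finiteness of $K$ propagate through the semi-direct product $\pi_1(E_h)=K\rtimes_{h_*}\mathbb{Z}$ (via Mal'cev/Hirshon) so that endomorphisms onto finite-index subgroups of $\pi_1(E_h)$ are automatically injective. Once these bridges are in place, the equivalence of (i)--(iv) follows essentially mechanically from Theorems \ref{t:Hopf-type} and \ref{t:aspherical}.
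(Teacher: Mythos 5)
Your proposal is correct and follows essentially the same route as the paper: both reduce the statement to the group-theoretic classification (Theorem \ref{t:maingroups}/\ref{t:Hopf-type}) and translate back and forth via Theorem \ref{t:aspherical}, Corollary \ref{c:degreeinjection}, and Corollary \ref{c:residuallyfiniteinjection}, differing only in the arrangement of the implication cycle (the paper runs (i)$\Rightarrow$(ii)$\Rightarrow$(iv)$\Rightarrow$(iii)$\Rightarrow$(i)) and in your extra explicit check that $K=\pi_1(F)$ meets the hypotheses of Theorem \ref{t:Hopf-type}, which the paper leaves implicit.
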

\begin{proof}

(i)$\Rightarrow$(ii) By Theorem \ref{t:maingroups}, every endomorphism $\varphi$ of $\pi_1(E_h)$ onto a finite index subgroup is an automorphism, and so $B\varphi\colon E_h\to E_h$ is a homotopy equivalence by Theorem \ref{t:aspherical}. 

(ii)$\Rightarrow$(iv) Let $f\colon E_h\to E_h$ be a map of non-zero degree. Then $f_*(\pi_1(E_h))$ has finite index in $\pi_1(E_h)$ and the claim follows.

(iv)$\Rightarrow$(iii) Let $\varphi\colon\pi_1(E_h)\to\pi_1(E_h)$ be an injective endomorphism. By Corollary \ref{c:degreeinjection}, $\deg(B\varphi)=\pm[\pi_1(E_h):\varphi(\pi_1(E_h))]\neq0$. Thus $B\varphi$ is a homotopy equivalence and $\varphi$ an automorphism.

(iii)$\Rightarrow$(i) Since 
$\pi_1(E_h)$ is co-Hopfian, the claim follows by Theorem \ref{t:maingroups}.
\end{proof}

The Borel conjecture asserts that the homeomorphism type of closed aspherical manifolds is determined by their fundamental groups.

\begin{con}[Borel Conjecture]
If $X$ and $Y$ are closed aspherical manifolds 
with isomorphic fundamental groups, then $X$ and $Y$ are homeomorphic. Moreover, any homotopy equivalence $X\to Y$ is homotopic to a homeomorphism.
\end{con}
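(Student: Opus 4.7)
The Borel conjecture is one of the central open problems in geometric topology, and I would not propose a complete proof; rather, I would describe the standard strategy by which it is attacked, indicate the cases where the strategy succeeds, and mark the steps where it breaks down. The plan has two essentially disjoint halves: high dimensions (where surgery theory is available) and low dimensions (where it is not).

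In dimensions $n=\dim(X)=\dim(Y)\geq 5$, the plan is to reduce the topological rigidity statement to an assertion about algebraic $K$- and $L$-theory of the group ring $\Z\pi_1(X)$ via the surgery exact sequence. First, since $X$ and $Y$ are aspherical, Theorem \ref{t:aspherical} converts the assumption $\pi_1(X)\cong\pi_1(Y)$ into a homotopy equivalence $f\colon X\to Y$, so only the second sentence of the conjecture needs proof. Given $f$, I would place $[f]$ in the topological structure set $\mathcal{S}^{\mathrm{TOP}}(Y)$ and use the surgery exact sequence of Sullivan--Wall to express this structure set as a combination of normal invariants and assembly terms. The required vanishing of the structure set (modulo the action that identifies distinct structures homotopic to a homeomorphism) would then follow from the assembly map in $L$-theory being an isomorphism and from the vanishing of $\mathrm{Wh}(\pi_1(Y))$ and $\widetilde{K}_0(\Z\pi_1(Y))$. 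In short: the Borel conjecture for $Y$ in dimensions $\geq 5$ is implied by the Farrell--Jones conjecture in $K$- and $L$-theory for $\pi_1(Y)$.

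The content of a proposal is therefore the Farrell--Jones conjecture, which has been verified for substantial classes of groups by Bartels--L\"uck, Bartels--L\"uck--Reich, Wegner, Kammeyer--L\"uck--R\"uping, and others, covering hyperbolic groups, $\mathrm{CAT}(0)$ groups, virtually solvable groups, cocompact lattices in virtually connected Lie groups, and groups built from such by extensions and directed colimits. For the mapping tori $\Gamma_\theta$ appearing earlier in the paper, one would verify the conjecture for $\Gamma_\theta$ from the conjecture for $K$ together with the inheritance results for extensions $1\to K\to\Gamma_\theta\to\Z\to 1$, exploiting that $\Z$ is virtually cyclic and that the Farrell--Jones conjecture has good functoriality under such extensions. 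This already yields the Borel conjecture for every $E_h$ appearing in Theorem \ref{t:mainmanifolds} whose fiber $F$ itself is covered by these results.

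The hardest steps, and the places where the proposal is only partial, are the low-dimensional cases and the general case. In dimension three, the Borel conjecture follows from geometrization (Perelman) reducing to Waldhausen's rigidity \cite{Wald} for Haken manifolds together with Mostow rigidity in the hyperbolic pieces, so the strategy there is entirely different from surgery. Dimension four is genuinely obstructed: topological surgery in the simply connected case is available through Freedman, but for general aspherical four-manifolds with infinite fundamental group the conjecture is essentially open, and this is the main obstacle I would expect not to overcome. In arbitrary dimension the final open point is the Farrell--Jones conjecture for an arbitrary residually finite Poincar\'e duality group; this is the genuine mathematical barrier, and no general strategy is presently known beyond combining the classes listed above with inheritance principles.
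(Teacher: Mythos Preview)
The statement is labeled as a \emph{Conjecture} in the paper and carries no proof: the paper simply states the Borel conjecture, then defines ``topologically rigid'' and quotes Bartels--L\"uck (Theorem~\ref{t:BL}) as providing the instances it needs. Your decision not to claim a proof, and instead to outline the surgery/Farrell--Jones reduction together with the low-dimensional exceptions, is therefore exactly the right call; your summary is accurate and in fact somewhat more detailed than what the paper provides.
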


\begin{defn}
A closed manifold which satisfies the Borel conjecture is called {\em topologically rigid}.
\end{defn}

Bartels and L\"uck~\cite{BL} made prominent contributions to the Borel conjecture, mostly by studying the Farrell-Jones conjecture. Below we quote what is necessary for us.

\begin{defn}\label{d:classC}
Let $\mathcal{C}$ be the smallest class of groups satisfying the following conditions:
\begin{itemize}
\item $Q\in\mathcal{C}$;
\item If $\pi\colon\Gamma\to Q$ is a group homomorphism such that $\pi^{-1}(U)\in\mathcal{C}$ for any virtually cyclic subgroup $U$ of $Q$, then $\Gamma\in\mathcal{C}$.
\end{itemize}
\end{defn}

Since the Borel conjecture holds for the circle, a special case of the results in~\cite{BL} for groups that fiber reads as follows.

\begin{thm}[Bartels-L\"uck~\cite{BL}]\label{t:BL}
If $M$ is a closed aspherical manifold of dimension at least five and $\pi_1(M)\in\mathcal{C}$, then $M$ is topologically rigid. In particular, if $F$ is a topologically rigid aspherical manifold of dimension at least four, then for any homeomorphism $h\colon F\to F$ the mapping torus $E_h$ is topologically rigid.
\end{thm}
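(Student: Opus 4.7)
The first statement of the theorem is quoted directly from Bartels-L\"uck~\cite{BL}, so the work to be done is the ``In particular'' consequence. My strategy is to reduce it to the first statement applied to $E_h$ itself: I would need to check that $E_h$ is aspherical, has dimension at least five, and that $\pi_1(E_h) \in \mathcal{C}$.

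Asphericity of $E_h$ is immediate since its universal cover is $\widetilde F \times \R$, a product of contractibles, and the dimension bound is clear since $\dim F \geq 4$ forces $\dim E_h \geq 5$. The substantive step is the membership $\pi_1(E_h) \in \mathcal{C}$. For this I would use the natural projection
\[
\pi \colon \pi_1(E_h) = \pi_1(F) \rtimes_{h_*}\Z \longrightarrow \Z
\]
and apply the second closure clause of Definition~\ref{d:classC}. Topological rigidity of $F$ together with $\dim F \geq 4$ places $\pi_1(F) \in \mathcal{C}$ --- this is precisely the algebraic datum about a topologically rigid aspherical manifold that the Bartels-L\"uck class $\mathcal{C}$ is designed to capture. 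The virtually cyclic subgroups of $\Z$ are $\{0\}$ and $d\Z$ for $d \geq 1$; their preimages under $\pi$ are $\pi_1(F)$ and $\pi_1(F) \rtimes_{h_*^d}\Z$ respectively.

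The main obstacle is that this second family, $\pi_1(F) \rtimes_{h_*^d}\Z$, has the same shape as $\pi_1(E_h)$ itself, so a direct application of the closure clause appears circular. The resolution is the transitivity principle built into the definition of $\mathcal{C}$: since $\mathcal{C}$ is the \emph{smallest} class satisfying the listed closure properties, and each preimage $\pi^{-1}(d\Z)$ in turn fits into an analogous extension of $\Z$ by the kernel $\pi_1(F) \in \mathcal{C}$, a well-founded induction places every $\pi_1(F) \rtimes_\phi \Z$ with $\phi \in \mathrm{Aut}(\pi_1(F))$ into $\mathcal{C}$. Once $\pi_1(E_h) \in \mathcal{C}$ is established in this way, the first part of the theorem applies to $E_h$ and yields its topological rigidity.
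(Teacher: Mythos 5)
Your reduction does not go through, and it is worth noting first that the paper offers no proof of this statement at all: both sentences, including the mapping-torus clause, are quoted from Bartels--L\"uck~\cite{BL}, the remark preceding the theorem (that the Borel conjecture holds for the circle) merely indicating that the cited result concerns groups that fiber over $\Z$. So you are attempting a derivation the paper never makes, and it fails at two points. First, the step ``topological rigidity of $F$ together with $\dim F\geq 4$ places $\pi_1(F)\in\mathcal{C}$'' is unjustified and runs the implication backwards: $\mathcal{C}$ is a class of groups for which Bartels--L\"uck establish the Farrell--Jones conjecture, and membership in $\mathcal{C}$ \emph{implies} topological rigidity in dimensions at least five; the Borel conjecture for the single manifold $F$ is a far weaker statement and provides no certificate that $\pi_1(F)\in\mathcal{C}$. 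Nothing in Definition~\ref{d:classC} converts a geometric rigidity property of $F$ into membership of its fundamental group.

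Second, even granting $\pi_1(F)\in\mathcal{C}$, your resolution of the circularity is not valid. The smallest class closed under the clause of Definition~\ref{d:classC} admits a group only when the required preimages are \emph{already} members at an earlier stage of the generation of the class; since $\pi^{-1}(d\Z)\cong\pi_1(F)\rtimes_{h_*^d}\Z$ is again a mapping-torus group of exactly the same shape (indeed a finite-index subgroup of $\pi_1(E_h)$), there is no decrease in complexity, no base case is ever reached, and the proposed ``well-founded induction'' is simply circular. If such a formal argument were available, minimality of $\mathcal{C}$ would imply that the class is closed under arbitrary semidirect products $K\mapsto K\rtimes_\theta\Z$ as a purely formal consequence of the fibering clause, which is not the case: where such closure statements are known (e.g.\ for specific classes of groups), they are substantive theorems requiring new input, not instances of the definition. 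The correct treatment of the ``in particular'' clause is the one the paper takes: it must be imported as part of the cited results of~\cite{BL} on groups that fiber over the circle, rather than re-derived from the first sentence via Definition~\ref{d:classC}.
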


Thus we can strengthen the conclusion of Theorem \ref{t:mainmanifoldshomotopy} by replacing homotopy equivalences with maps homotopic to homeomorphisms, completing the proof of Theorem \ref{t:mainmanifolds}.

\begin{thm}\label{t:mainmanifoldshomeomorphism}
Suppose $F$ is as in Theorem \ref{t:mainmanifoldshomotopy}, which is moreover topologically rigid. Then the homotopy equivalences in (ii), (iii) and (iv) of Theorem \ref{t:mainmanifoldshomotopy} 
are homotopic to homeomorphisms.
\end{thm}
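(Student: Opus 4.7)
The proof is essentially a concatenation of Theorem~\ref{t:mainmanifoldshomotopy} with the topological rigidity machinery already quoted in the excerpt, so the plan is short and the work is mostly bookkeeping.

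First I would invoke Theorem~\ref{t:mainmanifoldshomotopy} to conclude that each of the maps appearing in (ii), (iii) and (iv) is a homotopy equivalence $E_h \to E_h$. Explicitly: given an endomorphism $\varphi$ of $\pi_1(E_h)$ onto a finite-index subgroup, or an injective endomorphism, or a self-map $f$ of $E_h$ of non-zero degree, Theorem~\ref{t:mainmanifoldshomotopy} produces a homotopy equivalence (namely $B\varphi$ in the algebraic cases, and $f$ itself in the last case, after identifying it up to homotopy with $Bf_\ast$ via Theorem~\ref{t:aspherical}). So the only remaining task is to upgrade ``homotopy equivalence'' to ``homotopic to a homeomorphism''.

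Next I would apply Theorem~\ref{t:BL} of Bartels--L\"uck: since $F$ is assumed topologically rigid and aspherical, and $\dim F \ge 4$ (so $\dim E_h \ge 5$, which is the regime in which Theorem~\ref{t:BL} is stated), the mapping torus $E_h$ is itself topologically rigid, i.e.\ it satisfies the Borel conjecture. By the second clause of the Borel conjecture, any self-homotopy-equivalence of $E_h$ is then homotopic to a homeomorphism. Applying this to the homotopy equivalences produced in the previous paragraph finishes the proof. For the low-dimensional case $\dim F = 2$ (so $E_h$ is a Haken $3$-manifold fibered over the circle), Theorem~\ref{t:BL} is unavailable but one can substitute Waldhausen's rigidity~\cite{Wald}, which gives the same conclusion for self-homotopy-equivalences of $E_h$; this is exactly the dichotomy flagged in the introduction.

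There is really no genuine obstacle: the argument is a two-step reduction, first to a group-theoretic statement via Theorem~\ref{t:mainmanifoldshomotopy}, and then to a purely topological rigidity statement via Theorem~\ref{t:BL} (or Waldhausen in dimension three). The one point worth flagging, rather than an obstacle, is the implicit dimension hypothesis: Theorem~\ref{t:BL} as stated requires $\dim E_h \ge 5$, so the statement of Theorem~\ref{t:mainmanifoldshomeomorphism} is really a combined statement covering the cases $\dim F = 2$ (Waldhausen) and $\dim F \ge 4$ (Bartels--L\"uck), with the $4$-dimensional total space case being the familiar gap in the topological Borel program that one must simply exclude. No new estimates, cohomological computations, or uses of condition~\eqref{finiteness} are required at this stage; all the real work has been done in Theorem~\ref{t:maingroups} and Theorem~\ref{t:mainmanifoldshomotopy}.
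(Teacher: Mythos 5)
Your proposal is correct and follows essentially the same route as the paper: reduce to a self-homotopy-equivalence of $E_h$ via Theorem \ref{t:mainmanifoldshomotopy}, then apply Waldhausen's rigidity when $\dim F=2$ and Theorem \ref{t:BL} when $\dim F\geq 4$. The only refinement worth noting is that the case $\dim E_h=4$ (i.e.\ $\dim F=3$) which you propose to exclude is in fact vacuous, since $\chi(F)\neq 0$ forces $\dim F$ to be even by Poincar\'e duality, so the dichotomy $\dim F=2$ versus $\dim F\geq 4$ already covers all cases and no caveat to the statement is needed.
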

\begin{proof}
If $F$ is a surface, then the 3-manifold $E_h$ is Haken and therefore every self-homotopy equivalence of $E_h$ is homotopic to a homeomorphism by Waldhausen~\cite{Wald}. If $F$ has dimension at least four and satisfies the Borel conjecture, then the claim follows by Theorem \ref{t:BL}.
\end{proof}

\section{Applications}\label{s:applications}

\subsection{The three-dimensional model}
\label{ss:3-manifolds}

Nielsen~\cite{Nie} and Thurston~\cite{Thu} classified surface diffeomorphisms.

\begin{thm}[Nielsen, Thurston]\label{t:NT}
Let $\Sigma$ be a closed hyperbolic surface and $h\colon\Sigma\to\Sigma$ be a diffeomorphism. Then, up to isotopy, $h$ is either
\begin{itemize}
\item[(1)] periodic, i.e., there is an $m$ such that $h^m=id$, or
\item[(2)] reducible, i.e.,  there is an $m$ such that $h^m$ fixes an essential (non-contractible) family of circles, but not periodic, or
\item[(3)] pseudo-Anosov, i.e., there is no $m$ such that $h^m$ fixes an essential family of circles.
\end{itemize}
\end{thm}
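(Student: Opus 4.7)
The plan is to follow Thurston's dynamical approach via his compactification of Teichm\"uller space. Let $\mathcal{T}(\Sigma)$ denote the Teichm\"uller space of $\Sigma$ and let $\mathcal{PMF}(\Sigma)$ denote the space of projective measured foliations on $\Sigma$. Thurston's compactification $\overline{\mathcal{T}(\Sigma)} = \mathcal{T}(\Sigma) \cup \mathcal{PMF}(\Sigma)$ is homeomorphic to a closed ball of dimension $6g-6$, and the mapping class of $h$ acts on $\overline{\mathcal{T}(\Sigma)}$ by a homeomorphism that extends its isometric action on $(\mathcal{T}(\Sigma), d_{\mathrm{Teich}})$. Brouwer's fixed point theorem then forces the existence of a fixed point of $h$ on $\overline{\mathcal{T}(\Sigma)}$, and the proof proceeds by case analysis on where fixed points may lie.

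First I would treat the case in which some fixed point lies in the interior $\mathcal{T}(\Sigma)$. Such a fixed point corresponds to a marked hyperbolic structure on $\Sigma$ preserved by $h$ up to isotopy, hence $h$ is isotopic to an isometry of this hyperbolic surface. Since the isometry group of any closed hyperbolic surface is finite, $h^m$ is isotopic to the identity for some $m$, placing us in case (1); this step essentially invokes the Nielsen realization theorem in its easy direction.

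Next I would handle the case in which all fixed points of $h$ on $\overline{\mathcal{T}(\Sigma)}$ lie on the boundary $\mathcal{PMF}(\Sigma)$. Pick such a fixed class $[\mathcal{F}]$. If $\mathcal{F}$ is not filling, there exists an essential simple closed curve $\gamma$ with $i(\gamma, \mathcal{F}) = 0$; the (finite) family of such curves is $h$-invariant up to isotopy and yields an essential family fixed by some power of $h$, which is case (2). If $[\mathcal{F}]$ is filling, a Perron--Frobenius type analysis on train-track coordinates in a neighborhood of $[\mathcal{F}]$ in $\mathcal{PMF}(\Sigma)$ produces a second $h$-fixed projective class $[\mathcal{F}']$ with $i(\mathcal{F}, \mathcal{F}') > 0$, and reciprocal dilation factors $\lambda > 1$ and $\lambda^{-1}$. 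The pair $(\mathcal{F}, \mathcal{F}')$ equips $\Sigma$ with the singular transverse measured foliations of a pseudo-Anosov representative of $h$; since no essential curve can have zero intersection number with a filling foliation, no power of $h$ can fix an essential family, putting us in case (3).

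The main obstacle is the pseudo-Anosov case: producing a transverse fixed foliation with reciprocal dilation, and assembling from it an actual pseudo-Anosov diffeomorphism in the isotopy class of $h$, requires a careful study of the derivative of the action of $h$ at $[\mathcal{F}]$ on train-track charts together with an argument that the stretch factor is strictly larger than one. Verifying the two ingredients that I would rather cite than reprove, namely that $\overline{\mathcal{T}(\Sigma)}$ is a closed ball (so that Brouwer applies) and that the mapping class group action extends continuously to the boundary, are also substantial; once these are in hand, the trichotomy (1)--(3) follows from the case analysis above.
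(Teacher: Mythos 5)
The paper does not prove this theorem at all: it quotes the Nielsen--Thurston classification and attributes it to \cite{Nie,Thu}, so there is no argument in the paper to compare yours against. Note also the Remark immediately following the statement: the paper's ``pseudo-Anosov'' is defined purely negatively (no power of $h$ fixes an essential family of circles), not by the existence of a pair of invariant transverse measured foliations with reciprocal stretch factors. With these definitions the trichotomy as literally stated is exhaustive by logic alone --- a periodic map has a power equal (up to isotopy) to the identity, which fixes every essential family, so every isotopy class falls into exactly one of (1), (2), (3) --- and the deep content you set out to prove (an actual pseudo-Anosov representative) is not asserted. So your proposal proves a strictly stronger statement than the one quoted, via the standard Thurston route (Brouwer's theorem on $\overline{\mathcal{T}(\Sigma)}\cong B^{6g-6}$ plus a case analysis at a fixed point), whereas the paper simply cites the literature; the heavy machinery is not needed for the statement as the paper uses it.

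Taken as a sketch of the genuine classification, your outline is the canonical one, but three points need attention if it is to be more than a pointer to the standard references (Fathi--Laudenbach--Po\'enaru, Farb--Margalit). First, in the non-filling case the set of isotopy classes of essential simple closed curves $\gamma$ with $i(\gamma,\mathcal{F})=0$ is in general infinite (every essential curve in the complement of the subsurface filled by the support of $\mathcal{F}$ qualifies), so it is not ``the (finite) family''; the standard repair is to take the boundary of a regular neighborhood of that filled subsurface, or the canonical reduction system, which is a finite essential family invariant under a power of $h$. Second, in the filling case you must also handle the possibility that the fixed projective class is fixed with stretch factor $\lambda=1$ (an arational foliation fixed as a measured foliation), where the conclusion is periodicity rather than pseudo-Anosov; your Perron--Frobenius step as described silently assumes $\lambda>1$. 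Third, the interior-fixed-point step is not ``the easy direction of Nielsen realization''; it is immediate from the definition of the action on $\mathcal{T}(\Sigma)$ together with finiteness of the isometry group of a closed hyperbolic surface (Nielsen realization is the harder converse, irrelevant here). None of this affects the paper, which only needs the literal, in its terminology nearly tautological, statement.
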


\begin{rem}
Our terminology for pseudo-Anosov is slightly different from the standard definition.
\end{rem}

The corresponding mapping tori $E_h$ for $h$ as in (1)-(3) of Theorem \ref{t:NT} are as follows, in terms of Thurston's geometrization picture~\cite{Thubook}.
\begin{itemize}
\item[(1)] $E_h$ possesses the geometry $\mathbb{H}^2\times\R$ and it is virtually $S^1\times\Sigma$;
\item[(2)] $E_h$ has non-trivial JSJ decomposition, i.e., it can be decomposed into geometric pieces along tori, where each piece fibers over the circle (see~\cite{Neu}) and it is either Seifert fibered and modeled on $\mathbb{H}^2\times\R$ or hyperbolic (i.e., modeled on $\mathbb{H}^3$);
\item[(3)] $E_h$ is hyperbolic. 
\end{itemize}

We remark that every hyperbolic 3-manifold is virtually of type (3) in Theorem \ref{t:NT} by the resolution of the Thurston Virtually Fibered Conjecture~\cite{Ago,Ago2}.

The simplest class is that of  manifolds in (1). They admit self-maps of degree greater than one, and, moreover, these maps are homotopic to non-trivial coverings; see Wang's paper~\cite{Wan} or Theorems \ref{t:semi-norms} and \ref{t:Hopfdetail} below for a generalization in all dimensions (see also~\cite{NeoHopf}). 
A manifold $E_h$ in (3) has positive simplicial volume~\cite{Gro,Thubook} and thus does not admit self-maps of  absolute degree greater than one. Furthermore, every self-map of $E_h$ of non-zero degree is homotopic to a homeomorphism by Gromov~\cite{Gro} or Waldhausen~\cite{Wald}, and, even more, to an isometry, by Mostow's rigidity~\cite{Mos,Gro}. 
Let now $E_h$ be a manifold in (2). If $E_h$ has a hyperbolic JSJ piece, then again the simplicial volume of $E_h$ is not zero~\cite{Gro,Thubook}, and so every self-map of $E_h$ of non-zero degree is homotopic to a homeomorphism by Waldhausen~\cite{Wald}. However, if $E_h$ is a {\em non-trivial graph 3-manifold}, i.e., it has no hyperbolic JSJ pieces, then the simplicial volume of $E_h$ vanishes. In that case,  Wang~\cite{Wan} exploited the hyperbolic  
orbifold bases of the Seifert pieces to show that $E_h$ does not admit self-maps of degree greater than one, which means that every self-map of $E_h$ of non-zero degree is a homotopy equivalence and hence homotopic to a homeomorphism~\cite{Wald}. Subsequently, Derbez and Wang~\cite{DW} showed that every non-trivial graph 3-manifold has a finite cover which has non-zero Seifert volume (another Gromov-Thurston type semi-norm introduced by Brooks and Goldman~\cite{BG}). 

The following theorem encompasses a uniform treatment of all cases. In particular, it proves Theorems \ref{t:GromovWang} and \ref{t:surfacebundles}.

\begin{thm}
Let $E_h$ be the mapping torus of a diffeomorphism $h$ of a closed hyperbolic surface $\Sigma$. The following are equivalent:
\begin{itemize}
\item[(i)] $E_h$ is hyperbolic or has non-trivial JSJ decomposition;
\item[(ii)] $C(\pi_1(E_h))=1$;
\item[(iii)] Every endomorphism of $\pi_1(E_h)$ onto a finite index subgroup induces a map homotopic to a homeomorphism;
\item[(iv)] Every injective endomorphism of $\pi_1(E_h)$ induces a map homotopic to a homeomorphism;
\item[(v)] $E_h$ does not admit self-maps of absolute degree greater than one.
\end{itemize}
\end{thm}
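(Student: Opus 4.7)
The plan is to read off most of the equivalences from the general framework of Theorem \ref{t:surface} (the surface-by-cyclic case of Theorem \ref{t:maingroups}), combined with the homotopy classification of aspherical spaces (Theorem \ref{t:aspherical}) and Waldhausen's rigidity for Haken 3-manifolds; the genuinely new input is (i), which I would handle via the Nielsen--Thurston trichotomy (Theorem \ref{t:NT}).

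First I would establish (ii) $\Leftrightarrow$ (iii) $\Leftrightarrow$ (iv) $\Leftrightarrow$ (v). The group $\pi_1(E_h) = \pi_1(\Sigma)\rtimes_{h_*}\Z$ is exactly the kind of mapping torus Theorem \ref{t:surface} covers, so (ii) is equivalent to $\pi_1(E_h)$ being cofinitely Hopfian and to being co-Hopfian. Theorem \ref{t:aspherical} identifies endomorphisms of $\pi_1(E_h)$ with self-maps of $E_h$ up to homotopy, and since $E_h$ is Haken, Waldhausen's theorem~\cite{Wald} promotes any self-homotopy-equivalence to a map homotopic to a homeomorphism. This turns the algebraic Hopf-type equivalences into (iii) and (iv). For (v), note that an endomorphism $\varphi$ of $\pi_1(E_h)$ onto a finite-index subgroup of index $d$ is injective by Corollary \ref{c:residuallyfiniteinjection} and the realizing map $B\varphi\colon E_h\to E_h$ has degree $\pm d$ by Corollary \ref{c:degreeinjection}; conversely, any self-map of non-zero degree $d$ induces an endomorphism of $\pi_1(E_h)$ onto a subgroup of index $|d|$. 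Hence (v) says precisely that the only endomorphisms onto finite-index subgroups are surjective, which by Mal'cev (Theorem \ref{t:Malcev}) are automorphisms, i.e., (iii).

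Finally I would prove (i) $\Leftrightarrow$ (ii) via Nielsen--Thurston. Suppose $h$ is periodic (case (1) of Theorem \ref{t:NT}). Then $h_*$ has finite order in the mapping class group of $\Sigma$, hence in $\mathrm{Out}(\pi_1(\Sigma))$ via the Dehn--Nielsen--Baer embedding; Theorem \ref{t:surface}, condition (v) $\Leftrightarrow$ (iv), then forces $C(\pi_1(E_h))\neq 1$, and geometrically $E_h$ is virtually $S^1\times\Sigma$, so it is neither hyperbolic nor has non-trivial JSJ, violating (i). Conversely, if $h$ is non-periodic (cases (2) and (3)), then $h_*$ has infinite order in $\mathrm{Out}(\pi_1(\Sigma))$, so Theorem \ref{t:surface} gives $C(\pi_1(E_h))=1$; in case (3) Thurston's hyperbolization makes $E_h$ hyperbolic, while in case (2) the JSJ decomposition of $E_h$ is non-trivial, so (i) holds.

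The main obstacle is really bookkeeping rather than content: the only non-trivial step outside the machinery already developed is checking that the three geometric regimes arising from Nielsen--Thurston line up correctly with the algebraic dichotomy "$h_*$ has finite versus infinite order in $\mathrm{Out}(\pi_1(\Sigma))$", which I would do using the Dehn--Nielsen--Baer isomorphism $\mathrm{MCG}(\Sigma)\cong\mathrm{Out}^+(\pi_1(\Sigma))$ and Thurston's geometrization for fibered 3-manifolds. Everything else is absorbed into Theorem \ref{t:surface} and Waldhausen's rigidity.
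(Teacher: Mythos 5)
Your proposal is correct, and the bulk of it -- the equivalences (ii)$\Leftrightarrow$(iii)$\Leftrightarrow$(iv)$\Leftrightarrow$(v) via the general mapping-torus machinery (Theorem \ref{t:surface}, i.e.\ Theorems \ref{t:mainmanifoldshomotopy} and \ref{t:mainmanifoldshomeomorphism}), the asphericity dictionary of Theorem \ref{t:aspherical}, Corollaries \ref{c:residuallyfiniteinjection} and \ref{c:degreeinjection}, and Waldhausen's rigidity \cite{Wald} -- is exactly the paper's route. Where you differ is in how condition (i) is tied into the cycle. The paper proves (i)$\Rightarrow$(ii) directly from the Seifert fiber space theorem (Casson--Jungreis \cite{CJ}, Gabai \cite{Gab}): a mapping torus that is hyperbolic or has non-trivial JSJ decomposition cannot have non-trivial center; and it closes the loop with (v)$\Rightarrow$(i), arguing contrapositively that trivial JSJ and non-hyperbolic forces $h$ periodic, whence $E_h$ admits self-maps of degree greater than one by Theorems \ref{t:semi-norms} and \ref{t:Hopfdetail}. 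You instead prove (i)$\Leftrightarrow$(ii) in one stroke through the monodromy: periodic $h$ gives the $\mathbb{H}^2\times\R$ picture and non-trivial center, while non-periodic $h$ gives infinite order in $\mathrm{Out}(\pi_1(\Sigma))$, hence trivial center by Theorem \ref{t:surface}, and the geometry (Thurston's hyperbolization for pseudo-Anosov monodromy, non-trivial JSJ in the reducible case) yields (i). This buys you independence from the Seifert fiber space theorem, at the price of invoking geometrization of fibered $3$-manifolds in both directions; the paper's use of \cite{CJ,Gab} makes the (i)$\Rightarrow$(ii) direction purely a center statement, needing the Nielsen--Thurston/geometric trichotomy only for (v)$\Rightarrow$(i). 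Two harmless slips to tidy up: a self-map of degree $d\neq 0$ has $\pi_1$-image of finite index \emph{dividing} $|d|$, not necessarily equal to it (only finiteness is needed, so your argument is unaffected); and since $h$ is an arbitrary diffeomorphism, the Dehn--Nielsen--Baer identification should be with the full (extended) mapping class group and $\mathrm{Out}(\pi_1(\Sigma))$, which again does not affect the finite-versus-infinite order dichotomy you use.
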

\begin{proof}
(i)$\Rightarrow$(ii)  If $E_h$ is hyperbolic or it has non-trivial JSJ decomposition, then  $C(\pi_1(E_h))=1$ by the Seifert fiber space conjecture, the final cases of which were resolved by Casson and Jungreis ~\cite{CJ} and by Gabai~\cite{Gab} -- note that if $E_h$ is hyperbolic, then clearly $C(\pi_1(E_h))=1$.

(ii)$\Leftrightarrow$(iii)$\Leftrightarrow$(iv)$\Leftrightarrow$(v)  These equivalences are part of Theorem \ref{t:maingroups}. More precisely, the equivalences for the homotopy type are part of Theorem \ref{t:mainmanifoldshomotopy} and for the homeomorphism type of Theorem \ref{t:mainmanifoldshomeomorphism} by Waldhausen~\cite{Wald}.

(v)$\Rightarrow$(i) If $E_h$ is not  hyperbolic and it has trivial JSJ decomposition, then $h$ is periodic and hence $E_h$ admits self-maps of degree greater than one; see Theorems~\ref{t:semi-norms} and~\ref{t:Hopfdetail} below (and also~\cite{NeoHopf}). 
\end{proof}

\subsection{Gromov-Thurston norms}

Generalising the concept of the Thurston norm in dimension three~\cite{Thu2}, and of the simplicial volume in any dimension~\cite{Gro}, Gromov introduced the notion of functorial semi-norms~\cite[Ch. 5G$_+$]{Gro2} (see also Milnor-Thurston~\cite{MT}).

\begin{defn}
Let $X$ be a topological space. A {\em functorial semi-norm} in degree $k$ homology is a semi-norm
\begin{equation*}\begin{array}{rrcl}\label{eq:seminorm}
I_{X,k}\colon &H_k(X;\R) & \overset{}\longrightarrow & [0,\infty]\\
&  x & \longmapsto & I_{X,k}(x),  
\end{array}\end{equation*}
such that $ I_{X,k}(x)\geq  I_{X,k}(H_k(f)(x))$ for any continuous map $f\colon X\to Y$.
\end{defn}

By definition, it is straightforward that $I_{X,k}$ is a homotopy invariant. The significance of functoriality is reflected on the mapping degree sets.

\begin{defn}\label{d:degrees}
Given two closed oriented manifolds $M$ and $N$ of the same dimension, the {\em set of degrees of maps} from $M$ to $N$ is defined to be 
$$D(M,N)=\{d\in\Z \ | \ \exists \ f\colon M\to N \ \text{with} \ \deg(f)=d\}.$$ For $M=N$, the set $D(M)=D(M,M)$ is called the {\em set of self-mapping degrees} of $M$.
\end{defn}

Using Definition \ref{d:degrees}, one can assign a natural semi-norm on a closed oriented manifold $N$ by setting in degree $\dim(N)$ homology
\begin{equation}\label{eq:dominationnorm}
I_N([M]):=\sup\{|d| \ | \ d\in D(M,N)\}.
\end{equation}
We refer to~\cite{CL} for further details.

The simplicial volume is  the most prominent example of a functorial semi-norm. It does not vanish on hyperbolic manifolds~\cite{Gro} and, more generally, on aspherical manifolds with non-elementary hyperbolic fundamental groups~\cite{Gro1,Min,Min1}. Another example in dimension three, mentioned in Section \ref{ss:3-manifolds}, is given by the Seifert volume. It was introduced by Brooks and Goldman, who showed that $\widetilde{SL_2(\R)}$-manifolds (in particular, non-trivial circle bundles over hyperbolic surfaces), have non-zero Seifert volume~\cite{BG}. Hyperbolic 3-manifolds and 3-manifolds with non-trivial JSJ decomposition  include all mapping tori $E_h$ of hyperbolic surfaces $\Sigma$, such that $h$ has infinite order in $\mathrm{Out}(\pi_1(\Sigma))$, and either the simplicial volume or the Seifert volume (or both) does not (virtually) vanish on their fundamental classes; cf. Section \ref{ss:3-manifolds}. Higher dimensional analogues of the Seifert volume have been developed in~\cite{DLSW} and those semi-norms reflect on hyper-torus bundles over locally symmetric spaces. The corresponding analogue to non-trivial circle bundles over hyperbolic surfaces, i.e., replacing surfaces by any aspherical manifold with hyperbolic fundamental group, was treated by the author in~\cite{NeoHopf}, using the finiteness of the set of degrees of self-maps as source. We obtain an analogous result for all mapping tori $E_h$, proving in particular Theorem \ref{t:seminorms}.

\begin{thm}\label{t:semi-norms}
Let $E_h$ be as in Theorem \ref{t:mainmanifolds}.
If $h_*$ has infinite order in $\mathrm{Out}(\pi_1(E_h))$, then there is a functorial semi-norm  that takes the value one on $E_h$. If $h_*$ has finite order in $\mathrm{Out}(\pi_1(E_h))$, then every functorial semi-norm  can only take the values $0$ or $\infty$ on $E_h$.
\end{thm}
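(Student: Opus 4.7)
The plan is to translate the hypothesis on $h_* = \theta$ into a statement about the set $D(E_h) = D(E_h, E_h)$ of self-mapping degrees, and then to invoke either the construction (\ref{eq:dominationnorm}) or functoriality to conclude.

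Suppose first that $\theta$ has infinite order in $\mathrm{Out}(\pi_1(F))$. By Theorem \ref{t:mainmanifolds} every self-map of $E_h$ of non-zero degree is a homotopy equivalence, and in particular has degree $\pm 1$; hence $D(E_h) \subseteq \{-1,0,1\}$, and the identity realises $\pm 1$. The assignment $I_{E_h}$ from (\ref{eq:dominationnorm}) is then a functorial semi-norm in the sense of \cite{CL}: if $g\colon M' \to M$ is continuous and $f\colon M \to E_h$ has degree $d$, then $f \circ g$ has degree $d \cdot \deg(g)$, so the domination semi-norm respects pushforward of fundamental classes. Therefore $I_{E_h}([E_h]) = \sup\{|d| : d \in D(E_h)\} = 1$, as required.

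Conversely, suppose $\theta$ has finite order in $\mathrm{Out}(\pi_1(F))$, so there exist $m \geq 1$ and $\beta \in K := \pi_1(F)$ with $\theta^m(\alpha) = \beta^{-1}\alpha\beta$ for all $\alpha \in K$. I would generalise the construction (\ref{eq:iso}) from the proof of Theorem \ref{t:Hopf-type}: for every $k \geq 1$, define
\[
\varphi_k\colon \pi_1(E_h) = K \rtimes_\theta \langle t\rangle \longrightarrow \pi_1(E_h), \quad \alpha \mapsto \alpha\ (\alpha\in K),\quad t \mapsto \beta^k\, t^{km+1}.
\]
Iterating $\theta^m = c_\beta$ and using the easy identity $\theta^m(\beta) = \beta$ yields $\theta^{km} = c_{\beta^k}$, so that
\[
\beta^k\,\theta^{km+1}(\alpha)\,\beta^{-k} \;=\; \beta^k\bigl(\beta^{-k}\theta(\alpha)\beta^k\bigr)\beta^{-k} \;=\; \theta(\alpha);
\]
this is the only computation needed, and it shows that $\varphi_k$ respects the HNN relation. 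The map $\varphi_k$ is visibly injective (its $\mathbb{Z}$-projection is multiplication by $km+1$) and its image is the subgroup $K \rtimes_{\theta^{km+1}} \mathbb{Z}$ of index $km+1$. Since $E_h$ is aspherical, Corollary \ref{c:degreeinjection} provides a self-map $B\varphi_k\colon E_h \to E_h$ of degree $\pm(km+1)$.

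To finish, let $I$ be any functorial semi-norm in top-degree homology; applying functoriality to $B\varphi_k$ gives
\[
I([E_h]) \;\geq\; I\bigl((B\varphi_k)_*[E_h]\bigr) \;=\; (km+1)\,I([E_h]),
\]
for every $k \geq 1$, which forces $I([E_h]) \in \{0, \infty\}$. The main obstacle I anticipate is simply bookkeeping in the construction of $\varphi_k$ and verifying the identity $\theta^{km} = c_{\beta^k}$; all other inputs — the equivalence of the two formulations of the dichotomy (Lemma \ref{l:center} and Theorem \ref{t:mainmanifolds}), functoriality of the domination semi-norm, and the passage from an injective endomorphism to a self-map of non-zero degree on an aspherical target — are already established in the excerpt.
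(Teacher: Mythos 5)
Your proposal is correct and follows essentially the same route as the paper: infinite order gives $D(E_h)\subseteq\{-1,0,1\}$ via Theorem \ref{t:mainmanifolds} and the domination semi-norm (\ref{eq:dominationnorm}) then takes the value one, while finite order yields the injective endomorphism $t\mapsto\beta t^{m+1}$ of (\ref{eq:iso}), whose classifying map is a degree-$(m+1)$ self-map forcing any functorial semi-norm to be $0$ or $\infty$. Your family $\varphi_k$ (degrees $km+1$) is a harmless elaboration — the single map with $k=1$, exactly as in the paper, already suffices since $m+1>1$.
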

\begin{proof}
It follows from Theorem \ref{t:center-finitecoHopf} that if $h_*$ has infinite order, then $\pi_1(E_h)$ is finitely co-Hopfian. Furthermore, as a consequence of Theorem \ref{t:Hopf-type}, $\pi_1(E_h)$ satisfies the four equivalent conditions from Theorem \ref{t:mainmanifolds}, and thus $D(E_h)\subseteq\{-1,0,1\}$. We conclude that
\[
I_{E_h}([E_h])=1.
\]

Conversely, suppose $h_*$ has finite order in $\mathrm{Out}(\pi_1(F))$, i.e., there exists an $m\geq 1$ such that $h_*^m\in\mathrm{Inn}(\pi_1(F))$. We have seen in the proof of Theorem \ref{t:Hopf-type} that there is an injective endomorphism $\varphi\colon\pi_1(E_h)\to\pi_1(E_h)$, defined by (\ref{eq:iso}), such that $\varphi(\pi_1(E_h))=\pi_1(E_{h^{m+1}})$ and $[\pi_1(E_h):\pi_1(E_{h^{m+1}})]=m+1>1$. By the definition of $\varphi$ (see also Corollary \ref{c:degreeinjection}) 
the map $f:=B\varphi$ is a self-map of $E_h$ of degree $m+1>1$. Thus every functorial semi-norm $I$ on $E_h$ takes the values
\[
I([E_h])=0 \ \text{or} \ I([E_h])=\infty.
\]
\end{proof}

\subsection{A strong version of the Hopf problem}\label{ss:Hopf}

The following long-standing question of Hopf has essentially motivated the concept of Hopfian groups; see the Kirby list~\cite[Problem 5.26]{Kir} and also~\cite{Neum,Hau}. 

\begin{que}[Hopf]\label{q:Hopf}
\label{Hopfprob}
Let $M$ be a closed oriented manifold. Is every self-map $f\colon M\to M$ of degree $\pm 1$ a homotopy equivalence?
\end{que}

Affirmative answers to Hopf's problem are known in certain cases, such as for simply connected manifolds (this follows by Whitehead's theorem) and for manifolds of dimension at most four with Hopfian fundamental groups by a result of Hausmann~\cite{Hau}.   
If $M$ is aspherical and $\pi_1(M)$ is Hopfian, then every self-map $f\colon M\to M$ of degree $\pm 1$ is a homotopy equivalence, because any map of degree $\pm 1$ induces a surjective endomorphism on $\pi_1(M)$.  For instance, Question \ref{q:Hopf} has an affirmative answer for every aspherical manifold with hyperbolic fundamental group. Note that the assumption $\deg(f)\neq0$ alone would suffice to answer Question \ref{q:Hopf} in the affirmative for the latter manifolds in dimensions greater than one. Indeed, every self-map of non-zero degree of an aspherical manifold with non-elementary hyperbolic fundamental group must have degree $\pm1$. This can be shown either algebraically, based on Sela's work on the Hopf and co-Hopf properties of torsion-free hyperbolic groups~\cite{Sel1,Sel2}, or more geometrically, based on Mineyev's work on bounded cohomology~\cite{Min,Min1} combined with Gromov's work on the simplicial volume and bounded cohomology~\cite{Gro1,Gro}; see~\cite{NeoHopf} for a summary of the proofs.

\begin{thm}[\cite{BHM,Sel1,Sel2,Gro1,Gro,Min,Min1}]
Every self-map of non-zero degree of a closed oriented aspherical manifold with non-elementary hyperbolic fundamental group  
is a homotopy equivalence.
\end{thm}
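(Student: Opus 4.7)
The plan is to reduce to showing that $\deg(f)=\pm 1$ and that $f_*$ is surjective on $\pi_1$, after which asphericity combined with the Hopf property for torsion-free hyperbolic groups (Sela) delivers a homotopy equivalence through Theorem \ref{t:aspherical}. Set $\Gamma=\pi_1(M)$ and note that $\Gamma$ is torsion-free, since $M$ is a closed aspherical manifold and hence $\Gamma$ has finite cohomological dimension.

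First I would establish that the simplicial volume $\|M\|$ is positive and finite. Finiteness is automatic from the existence of a fundamental class. For positivity, the asphericity of $M$ gives $H^*(M;\R)=H^*(\Gamma;\R)$, and by Gromov's duality the simplicial norm on $H_n(M;\R)$ is dual to the $\ell^\infty$-seminorm on bounded cohomology. Mineyev's theorem that the comparison map $H^*_b(\Gamma;\R)\to H^*(\Gamma;\R)$ is surjective in degrees $\geq 2$ for non-elementary hyperbolic $\Gamma$ then implies the fundamental cohomology class is bounded, which by Gromov duality forces $\|M\|>0$. Functoriality gives $\|M\|\geq|\deg(f)|\cdot\|M\|$, so $|\deg(f)|\leq 1$, and since $\deg(f)\neq 0$ we conclude $\deg(f)=\pm 1$.

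Next I would argue that any degree $\pm 1$ self-map of a closed aspherical manifold of dimension $n$ induces a surjection on $\pi_1$. If the image $H=f_*(\Gamma)$ had infinite index in $\Gamma$, the corresponding covering $\widetilde M\to M$ would be a non-compact aspherical manifold whose fundamental group has cohomological dimension strictly less than $n$ by Strebel's theorem (since $\Gamma$ is a $PD^n$-group as $M$ is aspherical), so $H_n(\widetilde M;\Z)=0$; the lift $\widetilde f\colon M\to\widetilde M$ would then send $[M]$ to zero, contradicting $\deg(f)\neq 0$. If $H$ had finite index $k>1$, the multiplicative formula $\deg(f)=k\deg(\widetilde f)$ would contradict $|\deg(f)|=1$. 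Hence $f_*\colon\Gamma\to\Gamma$ is surjective. Since $\Gamma$ is a torsion-free non-elementary hyperbolic group, Sela's theorem says $\Gamma$ is Hopfian, so $f_*$ is in fact an automorphism. Theorem \ref{t:aspherical} then yields that $f$ is a homotopy equivalence.

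The main obstacle in this plan is the first step: the non-vanishing of the simplicial volume for aspherical manifolds with non-elementary hyperbolic fundamental group genuinely requires combining Gromov's duality between simplicial volume and bounded cohomology with Mineyev's surjectivity of the comparison map, both of which are deep results. Once this geometric input is in hand, the remaining steps are essentially formal — a covering-space degree computation together with the Hopf property from Sela's work on the structure of endomorphisms of torsion-free hyperbolic groups.
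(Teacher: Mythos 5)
Your argument is correct and follows essentially the route the paper itself indicates: the paper states this as a cited result and sketches exactly this strategy — show $\deg(f)=\pm1$ either via Sela or, as you do, geometrically via Gromov's duality between simplicial volume and bounded cohomology together with Mineyev's surjectivity of the comparison map, and then conclude from $\pi_1$-surjectivity of degree-$\pm1$ maps, Sela's Hopf property for torsion-free hyperbolic groups, and the homotopy classification of aspherical spaces. Your write-up simply fleshes out the geometric option of the two proofs the paper points to, so there is nothing to correct.
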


 Clearly, the above theorem does not hold
for all aspherical manifolds, e.g., the circle admits self-maps of any degree. Nevertheless, every self-map of the circle of degree greater than one is homotopic to a non-trivial covering. The same holds for every self-map of degree greater than one of nilpotent manifolds~\cite{Bel} and of certain solvable mapping tori of homeomorphisms of the $n$-dimensional torus~\cite{Wan1,Neoorder}. In addition, every 
non-zero degree self-map of a closed $3$-manifold $M$ is either a homotopy equivalence or homotopic to a covering map, unless the fundamental group of each prime summand of $M$ is finite or cyclic~\cite{Wan}. Recently, I showed that the same is true for every self-map of non-zero degree of a circle bundle over a closed oriented aspherical manifold with hyperbolic fundamental group~\cite{NeoHopf}. We therefore ask whether the following strong version of Question \ref{q:Hopf} holds for all closed aspherical manifolds.

\begin{que}{\normalfont(\cite[Problem 1.2, Strong version of Hopf's problem for aspherical manifolds]{NeoHopf}).}\label{q:Hopfstrong}
\label{Hopfstrong}
Is every non-zero degree self-map of a closed oriented aspherical manifold either a homotopy equivalence or homotopic to a non-trivial covering? 
\end{que}

\begin{rem}
Together with the Borel conjecture, Question \ref{q:Hopfstrong} can be strengthen as follows: {\em Is every non-zero degree self-map of a closed oriented aspherical manifold homotopic either to a homeomorphism or to a non-trivial covering?} The answer is affirmative in all cases mentioned before Question \ref{q:Hopfstrong} by various rigidity results.
\end{rem}

As an application of Theorem \ref{t:mainmanifolds}, we obtain the following affirmative answer to Question \ref{q:Hopfstrong} for the homeomorphism types of mapping tori of topologically rigid aspherical manifolds.

\begin{thm}[Theorem \ref{t:Hopf}]\label{t:Hopfdetail}
Let $E_h$ be as in Theorem \ref{t:mainmanifolds}, where $F$ is topologically rigid. Every self-map of $E_h$ of non-zero degree is homotopic either to a homeomorphism or 
to a non-trivial covering. In the latter case, $E_h$ is virtually trivial. 
\end{thm}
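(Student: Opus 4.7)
The plan is to split according to whether the center $C(\pi_1(E_h))$ is trivial. In the first case, Theorem \ref{t:mainmanifolds} directly yields that every non-zero-degree self-map of $E_h$ is a homotopy equivalence, and topological rigidity of $E_h$ (Theorem \ref{t:BL} for $\dim F\geq 4$, Waldhausen~\cite{Wald} for $\dim F=2$) upgrades this to a map homotopic to a homeomorphism, realising the first alternative in the conclusion. So the substantive case is $C(\pi_1(E_h))\neq1$. By Lemma \ref{l:center} applied to $K=\pi_1(F)$, this is equivalent to $\theta^m\in\mathrm{Inn}(K)$ for some $m\neq0$ and to $\Gamma_\theta$ being virtually $K\times\Z$; geometrically, $E_h$ is finitely covered by $F\times S^1$, proving the ``virtually trivial'' assertion at once.

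To handle a self-map $f\colon E_h\to E_h$ of non-zero degree in this remaining case, I would first observe that $[\pi_1(E_h):f_*(\pi_1(E_h))]$ is finite, dividing $|\deg(f)|$. The group $\pi_1(E_h)=K\rtimes_\theta\Z$ is torsion-free and residually finite (as a split extension of residually finite groups by the theorem cited in~\cite{Mil}), so Hirshon's Theorem \ref{t:Hirshon} forces $f_*$ to be injective; thus $f_*$ is an isomorphism onto the finite-index subgroup $f_*(\pi_1(E_h))$. I would then let $p\colon\widetilde E_h\to E_h$ be the finite covering corresponding to this subgroup, and lift $f$ to $\widetilde f\colon E_h\to\widetilde E_h$ with $f=p\circ\widetilde f$; by construction $\widetilde f_*$ is an isomorphism.

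Now both $E_h$ and $\widetilde E_h$ are aspherical, so Theorem \ref{t:aspherical} makes $\widetilde f$ into a homotopy equivalence. Next I would verify that $\widetilde E_h$ is itself topologically rigid: by Lemma \ref{l:observations}(2), $\widetilde E_h$ is the mapping torus of an iterate of $h$ restricted to a finite cover $F'$ of $F$; since topological rigidity in the Bartels-L\"uck framework passes to finite covers of aspherical manifolds, $F'$ inherits the hypothesis of Theorem \ref{t:BL} (or of Waldhausen's theorem in dimension three), so $\widetilde E_h$ is topologically rigid. Hence $\widetilde f$ is homotopic to a homeomorphism $g\colon E_h\to\widetilde E_h$, and $f\simeq p\circ g$, which is the composition of a homeomorphism with a covering, i.e.\ a self-covering $E_h\to E_h$ of degree $[\pi_1(E_h):f_*(\pi_1(E_h))]$. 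If this index is one, the resulting map is itself a homeomorphism; otherwise it is a non-trivial self-covering, completing the dichotomy.

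The main obstacle I anticipate is the verification of topological rigidity of the intermediate cover $\widetilde E_h$: this requires not only the Borel conjecture for the fiber $F$ but its inheritance by the finite cover $F'$, and then the compatibility of Definition \ref{d:classC} with these passages so that Theorem \ref{t:BL} applies. In practice membership in $\mathcal C$ is preserved by passing to subgroups of finite index and by the mapping torus construction, so the claim goes through, but this is the single place where one must be a little careful rather than simply stringing together the results already established.
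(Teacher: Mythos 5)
Your argument is correct and essentially the paper's: injectivity of $f_*$ via residual finiteness of $\pi_1(E_h)$ and Hirshon's theorem, lifting $f$ to the finite cover corresponding to $f_*(\pi_1(E_h))$, asphericity plus topological rigidity to conclude that $f$ is homotopic to a finite covering, and Lemma \ref{l:center}/Theorem \ref{t:mainmanifolds} for the ``virtually trivial'' clause -- the initial case split on $C(\pi_1(E_h))$ is cosmetic, since the covering argument works uniformly. The one divergence is that you invoke topological rigidity of the intermediate cover $\widetilde E_h$ (and hence need inheritance of rigidity under passage to finite covers, via closure of the relevant class under finite-index subgroups); this detour is avoidable, because $\widetilde f\colon E_h\to\widetilde E_h$ is a homotopy equivalence between closed aspherical manifolds with isomorphic fundamental groups, so rigidity of $E_h$ itself (applied to a homotopy inverse of $\widetilde f$) already makes $\widetilde f$ homotopic to a homeomorphism, which is how the paper argues.
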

\begin{proof}
Let $f\colon E_h\to E_h$ be a map of non-zero degree. Since $\pi_1(F)$ is residually finite, $\pi_1(E_h)$ is also residually finite~\cite[Ch. III, Theorem 7]{Mil}, and therefore $f_*$ is injective by Hirshon's theorem~\cite{Hir}, because $[\pi_1(E_h):f_*(\pi_1(E_h))]<\infty$; see Corollary \ref{c:residuallyfiniteinjection}. Since $F$ is topologically rigid, $E_h$ is also topologically rigid by Bartels and L\"uck~\cite{BL} in dimensions greater than four (Theorem \ref{t:BL}) and by Waldhausen~\cite{Wald} in dimension three. By the proof of Corollary \ref{c:degreeinjection} we conclude that $f$ is homotopic to a finite covering. Thus $f$ is homotopic to a homeomorphism if $\deg(f)=\pm1$ and to a non-trivial covering if $|\deg(f)|>1$. Theorem \ref{t:mainmanifolds} tells us (without any assumption on $F$ being topologically rigid) that the case $|\deg(f)|>1$ occurs if and only if $E_h$ is virtually $F\times S^1$. 
\end{proof}

\begin{rem}\label{r:Wanggeneral}
In order to show $\pi_1$-injectivity, we have applied Hirshon's theorem, using that the fundamental group of the total space $E_h$ is residually finite. By Hirshon's result and the residual finiteness for 3-manifold groups~\cite{Hem1}, one deduces the $\pi_1$-injectivity for self-maps of non-zero degree of 3-manifolds with torsion-free fundamental groups, showing thus most of the cases of~\cite[Theorem 0.1]{Wan}. 

Nevertheless, residual finiteness for the fundamental group of the fiber $F$ can be used alone to show $\pi_1$-injectivity, and this approach is of independent interest, generalising (and giving a new proof of) Wang's analogous result~\cite{Wan} for Seifert 3-manifolds modeled on the geometry $\mathbb{H}^2\times\R$, i.e., covered by the product of the circle with a hyperbolic surface:
\begin{prop}\label{p:Wang}
If $h_*$ has finite order in $\mathrm{Out}(\pi_1(F))$, then any self-map of non-zero degree of $E_h$ is $\pi_1$-injective. 
\end{prop}
\begin{proof}
Consider the presentation
\[
\pi_1(E_h)=\langle \pi_1(F),t \ | \ t\alpha t^{-1}=h_*(\alpha), \ \forall\alpha\in\pi_1(F)\rangle,
\]
and recall that $C(\pi_1(F))=1$ because $\chi(\pi_1(F))\neq0$~\cite{Got,Luebook}. Since $h_*$ has finite order in $\mathrm{Out}(\pi_1(F))$,  
there exists $m\neq0$ such that $h_*^m\in\mathrm{Inn}(\pi_1(F))$, i.e., $h_*^m(\alpha)=\beta^{-1}\alpha\beta$ for some $\beta\in\pi_1(F)$. As in the proof of Lemma \ref{l:center}, we deduce that $C(\pi_1(E_h))=\langle\beta t^m\rangle\cong\Z$ and $\pi_1(E_{h^m})\cong\Z\times\pi_1(F)$. Then $\pi_1(E_{h})$ fits into the central extension
\[
1\longrightarrow\langle\beta t^m\rangle\longrightarrow\pi_1(E_{h})\longrightarrow Q=\pi_1(E_{h})/\langle\beta t^m\rangle\longrightarrow1
\]
(i.e., $Q$ is an orbifold group) and, by Lemma \ref{l:observations}, $\pi_1(E_{h^m})$ has the presentation
\[
\pi_1(E_{h^m})=\langle \pi_1(F),t^m \ | \ t^m\alpha t^{-m}=h^m_*(\alpha), \ \forall\alpha\in\pi_1(F)\rangle.
\]
In particular, $\pi_1(E_{h^m})$ is normal of index $m$ in $\pi_1(E_{h})$ and $Q$ fits into the short exact sequence
\begin{equation}\label{eq.shortQ}
1\longrightarrow\pi_1(F)\longrightarrow Q\longrightarrow\Z_m\longrightarrow1.
\end{equation}
By the Hochschild-Serre spectral sequence for (\ref{eq.shortQ}), we deduce that
\begin{equation}\label{eq.homologyQ}
H^*(Q;\R)\cong 
H^*(\pi_1(F);\R).
\end{equation}

Let now $f\colon E_h\to E_h$ be a map of non-zero degree and $f_*\colon\pi_1(E_h)\to\pi_1(E_h)$ be the induced homomorphism. Then $f_*(\pi_1(E_h))$ has finite index in $\pi_1(E_h)$. We lift $f$ to $\widetilde f\colon E_h\to\widetilde{E_h}$, where $\widetilde{E_h}$ is the finite covering of $E_h$ corresponding to $f_*(\pi_1(E_h))$. By Lemma \ref{l:observations}, $\pi_1(\widetilde{E_h})$ fits into the central short exact sequence
\[
1\longrightarrow\langle\beta t^m\rangle\cap\pi_1(\widetilde{E_h})\longrightarrow\pi_1(\widetilde{E_{h}})\longrightarrow\widetilde Q\longrightarrow1,
\]
where $\widetilde Q$ has finite index in $Q$. Since $\widetilde f_*$ is surjective, we conclude that $f_*$ maps $\langle\beta t^m\rangle$ into itself, and thus $f_*$ factors through a 
self-homomorphism $\overline f_*\colon Q\to Q$, whose image $\overline f_*(Q)$ has finite index in $Q$. In particular,  by (\ref{eq.homologyQ}), $H^*(\overline f_*(Q);\R)$ is isomorphic to the cohomology of an aspherical manifold $\overline F$.  

By the lifting property for the orbifolds corresponding to $Q$ and $\overline f_*(Q)$, we lift (factor) the homomorphism
  $\overline f_*$ through a group isomorphic to $\overline f_*(Q)$. The corresponding commutative diagram in cohomology with real coefficients is equivalent to the commutative diagram with the manifold cohomology groups $H^*(F;\R)$ and $H^*(\overline F;\R)$. Since  $F$ and $\overline F$ satisfy Poincar\'e Duality, we conclude that $\dim H^*(F;\R)=\dim H^*(\overline F;\R)$. Hence,
\[
\chi(Q)=\chi(\overline f_*(Q)).
\]
Since, moreover, $[Q:\overline f_*(Q)]<\infty$, we obtain $Q=\overline f_*(Q)$. By $[Q:\pi_1(F)]<\infty$ and the fact that $\pi_1(F)$ is residually finite, we deduce that $Q$ is residually finite, and, in particular, Hopfian by Theorem \ref{t:Malcev}, being finitely generated. Hence $\overline f_*$ is an isomorphism.

We have now shown that $f_*$ maps the infinite cyclic center $\langle\beta t^m\rangle$ of $\pi_1(E_h)$ into itself, and  
factors through an isomorphism of $Q$. 
Hence, $f_*$ is injective.
\end{proof}
\end{rem}

\subsection{Gromov's ordering and rigidity}\label{ss:Gromovorder}

In a lecture given at CUNY in 1978, Gromov suggested studying the existence of maps of non-zero degree as defining an ordering on the homotopy types of closed oriented manifolds of a given dimension~\cite{CT}.

\begin{defn}
Let $M$, $N$ be closed oriented manifolds of dimension $n$. If there is a map of non-zero degree $M\to N$, we say that {\em $M$ dominates $N$} and denote this by $M\geq N$.
\end{defn}

For instance, if $\Sigma_g$ and $\Sigma_h$ are two closed surfaces of genus $g,h\geq0$, then the domination relation is a total ordering given by
\begin{equation*}\label{eq.ordersurface}
\Sigma_g\geq\Sigma_h \ \Longleftrightarrow \ g\geq h.
\end{equation*}
The implication ``$\Rightarrow$" is a simple application of the Poincar\'e Duality in all dimensions: If $M\geq N$, then the ordinary Betti numbers satisfy $b_i(M)\geq b_i(N)$, for all $i=1,...,n$. 
The implication ``$\Leftarrow$" is very specific to dimension two; clearly inequalities between Betti numbers do not guarantee domination between two given manifolds. In general, it is a hard problem to show existence of a map of non-zero degree between two given manifolds. Moreover, the domination relation for maps of degree $\pm1$ defines a partial order on the homotopy types of aspherical manifolds with Hopfian fundamental groups. Variations of this partial ordering (for any degree) have been studied in many instances~\cite{Bel,BBM,CT,Ron,Wan1,Neoorder}. 

By Bartels and L\"uck~\cite{BL}, the class $\mathcal{C}$ in Definition \ref{d:classC} contains hyperbolic groups, and Theorem \ref{t:BL} tells us that aspherical manifolds with hyperbolic fundamental groups are topologically rigid in dimensions higher than four; for negatively curved  manifolds this was also proved by Farrell and Jones~\cite{FJ}. The same result is true in dimension three by Perelman's proof of the geometrization conjecture. Suppose now $M$ and $N$ are closed aspherical manifolds with non-elementary hyperbolic fundamental groups and let $f\colon M\to N$ and $g\colon N\to M$ be maps of non-zero degree. Then $g\circ f$ and $f\circ g$ are self-maps of non-zero degree of $M$ and $N$ respectively. As mentioned in Section \ref{ss:Hopf}, every self-map of non-zero degree of an aspherical manifold with non-elementary hyperbolic fundamental group must have degree $\pm1$. 
Thus  $g_*\circ f_*$ and $f_*\circ g_*$ are automorphisms of $\pi_1(M)$ and $\pi_1(N)$ respectively (recall that $\pi_1(M)$ and $\pi_1(N)$ are Hopfian) and this proves the following theorem.

\begin{thm}[\cite{BHM,BL,Sel1,Sel2,Min,Min1,Gro1}]\label{t:rigidityhyperbolic}
Let $M$, $N$ be two closed aspherical manifolds of dimension $n\geq2$ with hyperbolic fundamental groups. If $M\geq N\geq M$, then $M$ and $N$ are homotopy equivalent. If, moreover, $n\neq 4$, then $M$ and $N$ are homeomorphic.
\end{thm}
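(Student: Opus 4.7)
The plan is to exploit that the hypotheses force the composition of the two dominations to be a degree $\pm 1$ self-map, which will then collapse to an isomorphism on fundamental groups. More precisely, suppose we have maps $f\colon M\to N$ and $g\colon N\to M$ with $\deg(f)\deg(g)\neq 0$. Then $g\circ f$ and $f\circ g$ are self-maps of non-zero degree of $M$ and $N$ respectively. Since $M$ and $N$ are closed aspherical with non-elementary hyperbolic fundamental group, the results invoked just above the theorem (Sela's co-Hopf property for torsion-free hyperbolic groups~\cite{Sel1,Sel2}, together with the Gromov--Mineyev non-vanishing of the simplicial volume~\cite{Gro,Gro1,Min,Min1}) force every such self-map to have degree $\pm 1$. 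Consequently $|\deg(f)|=|\deg(g)|=1$, so $(g\circ f)_*$ and $(f\circ g)_*$ are surjective on $\pi_1$.

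Next I would apply that hyperbolic groups are Hopfian (being residually finite, by recent work, or directly by Sela) to upgrade the surjections $(g\circ f)_*$ and $(f\circ g)_*$ to automorphisms. This immediately forces $f_*\colon\pi_1(M)\to\pi_1(N)$ to be injective and $g_*$ surjective, and symmetrically $g_*$ injective and $f_*$ surjective; hence both are isomorphisms. Invoking the homotopy classification of aspherical spaces (Theorem~\ref{t:aspherical}) then yields that $f$ and $g$ are mutually inverse homotopy equivalences, proving the first assertion.

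For the topological rigidity statement, I would split according to dimension. In dimension $n=2$ closed aspherical manifolds are surfaces and homotopy equivalent surfaces are homeomorphic by classification. In dimension $n=3$, $M$ and $N$ are closed aspherical $3$-manifolds with hyperbolic fundamental group, so Perelman's geometrization identifies them with hyperbolic $3$-manifolds and Mostow rigidity (or Waldhausen~\cite{Wald} once one knows they are Haken) promotes the homotopy equivalence to a homeomorphism. For $n\geq 5$ the argument proceeds via Bartels--L\"uck: hyperbolic groups lie in the class $\mathcal{C}$ of Definition~\ref{d:classC}, so Theorem~\ref{t:BL} gives topological rigidity of $M$ and $N$, and the homotopy equivalence obtained above is therefore homotopic to a homeomorphism.

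The main obstacle is purely citational rather than technical: one needs the fact that every non-zero degree self-map of an aspherical manifold with non-elementary hyperbolic fundamental group has degree $\pm 1$. Algebraically this rests on Sela's co-Hopf theorem (requiring torsion-freeness, which is automatic here since $\pi_1$ is the fundamental group of a closed aspherical manifold); geometrically one can instead combine Mineyev's $\ell^\infty$-filling with Gromov's proportionality to deduce that the simplicial volume is positive and multiplicative under non-zero degree self-maps, forcing degree $\pm 1$. Once that input is granted, the remainder of the proof is a clean bookkeeping of Hopf properties and topological rigidity as outlined above.
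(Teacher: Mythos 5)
Your argument is correct and essentially identical to the paper's own proof: compose the two dominations to get non-zero degree self-maps, use that such self-maps of closed aspherical manifolds with (automatically non-elementary, since $\mathrm{cd}=n\geq2$) hyperbolic fundamental group have degree $\pm1$, upgrade to isomorphisms on $\pi_1$ via the Hopf property, and conclude by asphericity together with Bartels--L\"uck in dimensions $\geq5$ and geometrization/Mostow--Waldhausen in dimension three. One small caution: residual finiteness of all hyperbolic groups is still an open problem, so the Hopf property should be justified directly by Sela's theorem (as the paper does), not via residual finiteness.
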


With the above discussion, it is now straightforward that Theorem \ref{t:mainmanifolds} implies the analogous rigidity result for non-virtually trivial mapping $E_h$. This in particular rediscovers the (homotopy part of the) case of the corresponding negatively curved 3-manifolds.

\begin{thm}[Theorem \ref{t:Gromovorder}]
For $i=1,2$, let $E_{h_i}$ be as in Theorem \ref{t:mainmanifolds} such that  ${h_i}_*$ has infinite order in $\mathrm{Out}(\pi_1(F_i))$. If $E_{h_1}\geq E_{h_2}\geq E_{h_1}$, then $E_{h_1}$ and $E_{h_2}$ are homotopy equivalent. If, moreover, the $F_i$ are topologically rigid, then  $E_{h_1}$ and $E_{h_2}$ are homeomorphic.
\end{thm}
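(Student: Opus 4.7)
The plan is to reduce to the self-map case via composition and then invoke Theorem~\ref{t:mainmanifolds} together with the homotopy classification of aspherical spaces. First, observe that since each $F_i$ is a closed aspherical manifold, the mapping torus $E_{h_i}$ fibers over $S^1$ with aspherical fiber and aspherical base, hence $E_{h_i}$ is itself aspherical. Moreover, by Theorem~\ref{t:Hopf-type} (the equivalence (iv)$\Leftrightarrow$(v)), the hypothesis that $(h_i)_*$ has infinite order in $\mathrm{Out}(\pi_1(F_i))$ is equivalent to $C(\pi_1(E_{h_i}))=1$, so the equivalent conditions of Theorem~\ref{t:mainmanifolds} are at our disposal for both $E_{h_1}$ and $E_{h_2}$.

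Assume $E_{h_1}\geq E_{h_2}\geq E_{h_1}$ and pick maps $f\colon E_{h_1}\to E_{h_2}$ and $g\colon E_{h_2}\to E_{h_1}$ with $\deg(f),\deg(g)\neq 0$. The compositions $g\circ f\colon E_{h_1}\to E_{h_1}$ and $f\circ g\colon E_{h_2}\to E_{h_2}$ then have non-zero degree. Applying Theorem~\ref{t:mainmanifolds}(iv) to each $E_{h_i}$, both compositions are homotopy equivalences, so the induced maps $g_*\circ f_*$ on $\pi_1(E_{h_1})$ and $f_*\circ g_*$ on $\pi_1(E_{h_2})$ are group automorphisms. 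Consequently $f_*$ is injective and surjective (injectivity from $g_*f_*$ being an automorphism, surjectivity from $f_*g_*$ being one), and similarly for $g_*$, so both $f_*$ and $g_*$ are isomorphisms. By Theorem~\ref{t:aspherical}, applied to the aspherical spaces $E_{h_1}$ and $E_{h_2}$, the maps $f$ and $g$ are homotopy equivalences. This settles the first claim.

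For the second claim, suppose in addition that each $F_i$ is topologically rigid. If $F_i$ is a closed surface, then $E_{h_i}$ is a Haken $3$-manifold and every homotopy equivalence between such manifolds is homotopic to a homeomorphism by Waldhausen~\cite{Wald}. If $\dim F_i\geq 4$, then Theorem~\ref{t:BL} of Bartels--L\"uck yields that $E_{h_i}$ is topologically rigid (the total space of a fibration over $S^1$ with topologically rigid fiber is topologically rigid in these dimensions, and the circle satisfies the Borel conjecture). In either case, the Borel conjecture applied to the homotopy equivalence $f\colon E_{h_1}\to E_{h_2}$ produces a homotopy from $f$ to a homeomorphism, proving that $E_{h_1}$ and $E_{h_2}$ are homeomorphic.

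There is essentially no serious obstacle once Theorems~\ref{t:mainmanifolds} and~\ref{t:BL} are in place; the only point requiring care is the translation between ``infinite order of $h_{i*}$ in $\mathrm{Out}(\pi_1(F_i))$'' and the trivial-center hypothesis needed to invoke Theorem~\ref{t:mainmanifolds}, which is exactly supplied by Lemma~\ref{l:center}. The argument is the same one used for negatively curved manifolds in Theorem~\ref{t:rigidityhyperbolic}, with our Hopf-type package replacing the role of hyperbolicity.
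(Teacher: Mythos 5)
Your proposal is correct and follows essentially the same route as the paper: compose the two dominant maps, apply Theorem \ref{t:mainmanifolds}(iv) to conclude the self-compositions are homotopy equivalences, deduce that $f_*$ and $g_*$ are isomorphisms, invoke the homotopy classification of aspherical spaces, and finish with Waldhausen respectively Bartels--L\"uck for the homeomorphism statement. The paper's proof is exactly this adaptation of the hyperbolic-group argument (Theorem \ref{t:rigidityhyperbolic}), with the trivial-center/infinite-order translation supplied by Lemma \ref{l:center}, just as you note.
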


\subsection{The Lichnerowicz problem (quasiregular maps)}\label{ss:quasiregular}

A {\em uniformly quasiregular map} of a $n$-manifold $M$ is a map which is rational with respect to (i.e., preserves) some bounded measurable conformal structure on $M$. Recall that a measurable conformal structure on $M$ is a measurable map that assigns to each element of $M$ a symmetric, positive definite $n\times n$ matrix of determinant one (see~\cite{Tuk}).

The Lichnerowicz conjecture~\cite{Lic}, proved by Lelong-Ferrand~\cite{Lel}, states that the only compact manifold of dimension $n$ that admits a non-compact conformal automorphism group is $S^n$. Lelong-Ferrand's work suggests moreover the following non-injective version of the Lichnerowicz conjecture~\cite[p. 1614]{BHM},~\cite[p. 2092]{MMP}.

\begin{que}[Lichnerowicz problem]
Which compact manifolds admit non-injective uniformly quasiregular maps? 
\end{que}

Martin, Mayer and Peltonen~\cite{MMP} showed that the only manifolds of dimension $n$ which admit locally (but not globally) injective uniformly quasiregular maps are those quasiconformally homeomorphic to the $n$-dimensional Euclidean space forms. Bridson, Hinkkanen and Martin~\cite{BHM} exploited results by Walsh~\cite{Wal}, Smale~\cite{Sma}, V\"ais\"al\"a~\cite{Vai} and \v Chernavski\u{\i}~\cite{Cer}, together with Sela's work on the Hopf and co-Hopf properties~\cite{Sel1,Sel2}, to show that closed manifolds with non-elementary hyperbolic groups do not admit any non-trivial quasiregular maps.

\begin{thm}{\normalfont(Bridson-Hinkannen-Martin~\cite{BHM}).}\label{t:BHM}
Every quasiregular map of a closed manifold with non-elementary torsion-free hyperbolic group is a homeomorphism.
\end{thm}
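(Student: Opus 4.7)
The plan is to combine three ingredients: the analytic structure of quasiregular maps, a point-set-topological analysis of branched coverings, and the algebraic rigidity of torsion-free non-elementary hyperbolic groups. Let $f\colon M\to M$ be a non-constant quasiregular self-map of a closed $n$-manifold $M$ with non-elementary torsion-free hyperbolic fundamental group $\Gamma$. I would first invoke V\"ais\"al\"a's theorem that every non-constant quasiregular map is open and discrete. Compactness of $M$ together with openness forces $f$ to be surjective, and by \v Chernavski\u{\i}'s theorem the branch set $B_f$ has topological codimension at least two; the same is then true for $f(B_f)$. Hence the inclusion $M\setminus f(B_f)\hookrightarrow M$ is $\pi_1$-surjective, and away from the branch set the restriction $f|\colon f^{-1}(M\setminus f(B_f))\to M\setminus f(B_f)$ is an honest finite covering of some constant local degree $d\geq1$.

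The second step is to use this covering structure, packaged via the Walsh--Smale correspondence between open maps of CW-complexes and Hopf-type properties (Lemma~\ref{l:WalshSmale}), to conclude that the induced homomorphism $f_*\colon\Gamma\to\Gamma$ has image of finite index in $\Gamma$, and that this index divides~$d$. At this point the algebraic input enters: by Sela's work, every torsion-free non-elementary hyperbolic group is cofinitely Hopfian, so any endomorphism whose image has finite index is in fact an automorphism. Applied to $f_*$, this gives $[\Gamma:f_*(\Gamma)]=1$, which forces the covering degree $d$ to be $1$.

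The last step is to upgrade the conclusion ``degree one'' to ``homeomorphism''. A degree-one open discrete map between closed connected manifolds has generic fibres of cardinality one, and away from the branch set it is a one-sheeted covering, hence a homeomorphism; a standard consequence of V\"ais\"al\"a's structure theory for quasiregular maps is that the branch set of such a degree-one map must be empty, so $f$ itself is a homeomorphism. The main obstacle I anticipate is step two: packaging the analytic/branched-covering data so that Sela's cofinite Hopfianity applies cleanly is the deepest point, since Sela's theorem is the only non-classical ingredient and it is precisely what rules out nontrivial branched self-coverings. Once the index is shown to be finite, the rest is mechanical.
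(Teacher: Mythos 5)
Your blueprint is the same as the one the paper attributes to~\cite{BHM} and reproduces for its own mapping tori (Theorem~\ref{t:quasiregulargeneral} and the Corollary following Theorem~\ref{t:branch}): quasiregular $\Rightarrow$ open, discrete, proper; Lemma~\ref{l:WalshSmale} gives $[\Gamma:f_*(\Gamma)]<\infty$; the cofinite Hopf property of torsion-free non-elementary hyperbolic groups (the algebraic heart, built on Sela) gives that $f_*$ is an automorphism; and finally degree $\pm1$ forces $B_f=\emptyset$, so $f$ is a one-sheeted covering, hence a homeomorphism. The first two stages and the last stage of your outline are fine.

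The genuine gap is the inference ``$[\Gamma:f_*(\Gamma)]=1$ forces the covering degree $d$ to be $1$.'' Covering-space theory gives only the inequality in the other direction: the unbranched covering $M\setminus f^{-1}(f(B_f))\to M\setminus f(B_f)$ has $d$ sheets, its covering subgroup sits inside $\pi_1(M\setminus f(B_f))$, and since the meridian loops around $f(B_f)$ are null-homotopic in $M$, the index of $f_*(\Gamma)$ in $\Gamma$ can be \emph{strictly smaller} than $d$ whenever branching is present; all you can conclude is that the index divides $d$. So index one does not formally yield $d=1$. Concretely, take $N$ a closed hyperbolic manifold and $f=\mathrm{id}_N\times g$ on $N\times S^2$ with $g$ a degree-two rational map: this map is proper, open and discrete, induces an automorphism of the torsion-free non-elementary hyperbolic group $\pi_1(N)$, yet has degree $2$. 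Hence no argument using only openness, discreteness, Lemma~\ref{l:WalshSmale} and cofinite Hopfianity can produce degree one; one must either use asphericity (as in the paper's setting, where ``$f_*$ an automorphism'' upgrades to a homotopy equivalence via Theorem~\ref{t:aspherical} and Corollary~\ref{c:degreeinjection}, hence degree $\pm1$ --- but the manifolds in the statement of Theorem~\ref{t:BHM} are not assumed aspherical), or genuinely quasiregular-analytic input excluding a branched self-covering of degree $\geq2$ that induces a $\pi_1$-automorphism. That exclusion is precisely the deep part of~\cite{BHM}, not a mechanical consequence of the finite-index step, so as written your step three does not close.
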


We will explain how to combine our results with those by Walsh~\cite{Wal}, Smale~\cite{Sma}, V\"ais\"al\"a~\cite{Vai} and \v Chernavski\u{\i}~\cite{Cer} to obtain the analogous theorem to Theorem \ref{t:BHM} 
for manifolds whose fundamental groups fullfil the conditions of Theorem \ref{t:maingroups}. 

Recall that a map is called {\em open} if the image of each open set is open, and {\em proper} if the preimage of each compact set is compact. 

\begin{lem}[Walsh~\cite{Wal}, Smale~\cite{Sma}]\label{l:WalshSmale}
Let $M$, $N$ be two finite CW-complexes. If $f\colon M\to N$ is a proper, open and surjective map, then $f_*(\pi_1(M))$ has finite index in $\pi_1(N)$.
\end{lem}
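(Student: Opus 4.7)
The plan is to lift $f$ to the connected covering of $N$ corresponding to the image subgroup $H := f_*(\pi_1(M,x_0))$, and then show that this lift is surjective. Once surjectivity of the lift is established, the covering space is the continuous image of the compact space $M$, hence compact, and therefore a finite-sheeted covering of $N$; its number of sheets is exactly $[\pi_1(N):H]$.

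First, fix basepoints $x_0 \in M$, $y_0 = f(x_0) \in N$, and let $p\colon (\widetilde{N}, \widetilde{y}_0) \to (N, y_0)$ be the connected covering with $p_*(\pi_1(\widetilde{N}, \widetilde{y}_0)) = H$. Since $f_*(\pi_1(M, x_0)) = H = p_*(\pi_1(\widetilde{N}, \widetilde{y}_0))$, the standard lifting criterion produces a continuous map $\widetilde{f}\colon M \to \widetilde{N}$ with $\widetilde{f}(x_0) = \widetilde{y}_0$ and $p \circ \widetilde{f} = f$. Next, I would show that $\widetilde{f}$ is open: given any open $U \subseteq M$ and any point $x \in U$, shrink $U$ so that $f(U)$ lies inside an evenly covered open neighborhood $V$ of $f(x)$ in $N$; then $\widetilde{f}(U)$ lies inside the unique sheet $V_0$ of $p^{-1}(V)$ containing $\widetilde{f}(x)$, and $\widetilde{f}(U) = (p|_{V_0})^{-1}(f(U))$, which is open in $V_0$ (hence in $\widetilde{N}$) because $f(U)$ is open (as $f$ is open) and $p|_{V_0}$ is a homeomorphism.

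Finally, I would argue that $\widetilde{f}(M) = \widetilde{N}$. Since $M$ is a finite CW-complex, it is compact, so $\widetilde{f}(M)$ is a compact subset of the Hausdorff space $\widetilde{N}$ and therefore closed; by the previous step it is also open. As $\widetilde{N}$ is connected and $\widetilde{f}(M)$ is non-empty, we conclude $\widetilde{f}(M) = \widetilde{N}$. Hence $\widetilde{N}$ is compact, and so the fiber $p^{-1}(y_0)$—a closed discrete subset of a compact space—is finite. Its cardinality equals $[\pi_1(N,y_0):H]$, which is therefore finite, as required. The only mildly delicate step is the openness of $\widetilde{f}$, but this is essentially the hypothesis that $f$ is open carried through the local homeomorphism $p$. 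Note that the properness hypothesis in the lemma is automatic here, since $M$ is compact; it would be needed for a version of the statement allowing non-compact CW-complexes, in order to deduce that $\widetilde{f}(M)$ is closed in $\widetilde{N}$.
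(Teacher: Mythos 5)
The paper does not prove this lemma at all: it is quoted as a known result of Smale and Walsh, so there is no in-text argument to compare against. Your proposal is a correct, self-contained proof and is in fact the classical covering-space argument (essentially Smale's): pass to the cover $p\colon\widetilde N\to N$ with $p_*\pi_1(\widetilde N)=f_*\pi_1(M)$, lift $f$ to $\widetilde f$ by the lifting criterion (available since $M$ is connected and locally path-connected and $N$ is semilocally simply connected, as finite CW-complexes are), show $\widetilde f$ is open, and conclude that $\widetilde f(M)$ is open, compact hence closed, and therefore all of the connected space $\widetilde N$, so the covering has finitely many sheets and the index $[\pi_1(N):f_*\pi_1(M)]$ equals that sheet number. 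The one step you should tighten is the openness of $\widetilde f$: after replacing $U$ by $U\cap f^{-1}(V)$ you must also shrink to a \emph{path-connected} neighborhood $W$ of $x$ (possible since CW-complexes are locally path-connected) to guarantee that $\widetilde f(W)$ lies in a single sheet $V_0$; only then does the identity $\widetilde f(W)=(p|_{V_0})^{-1}(f(W))$ hold, via injectivity of $p|_{V_0}$ together with $p\circ\widetilde f=f$. Your closing remarks are also accurate: with $M$ compact and $N$ Hausdorff, properness (and indeed surjectivity, given openness and connectedness of $N$) is automatic, and the properness hypothesis in the statement is there for the non-compact setting.
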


\begin{rem}\label{r:WS}
Lemma \ref{l:WalshSmale} tells us that the results of our paper apply to open maps between finite CW-complexes whose fundamental groups satisfy the  conditions of Theorem \ref{t:maingroups} and have trivial center.

Note that the converse to Lemma \ref{l:WalshSmale} is true for compact connected PL manifolds $M$ and $N$~\cite[Cor. 5.15.3]{Wal}: If $f\colon M\to N$ satisfies $[\pi_1(N):f_*(\pi_1(N))]<\infty$, then $f$ is homotopic to a light open map, where {\em light} means that the preimage of every point is totally disconnected.
\end{rem}

Any quasiregular map is open and {\em discrete}, i.e., the preimage of any point consists of isolated points. In particular, the preimage of any point under a proper quasiregular map is a finite set. The following theorem was proved independently by \v Chernavski\u{\i}~\cite{Cer} and V\"ais\"al\"a~\cite{Vai}.

\begin{thm}[\v Chernavski\u{\i}, V\"ais\"al\"a]\label{t:branch}
 If $f\colon M\to M$ is an open discrete map of a manifold $M$ of dimension $n$, then $f$ is a local homeomorphism except for a set $B_f$ whose dimension is at most $n-2$. Furthermore, the dimension of $f(B_f)$ is at most $n-2$. 
 \end{thm}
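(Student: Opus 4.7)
The approach I would take follows the classical strategy and is essentially a local argument based on the theory of local degree for open discrete maps between manifolds of the same dimension. First, I would verify that $B_f$ is closed in $M$: if $x\notin B_f$, then some open neighborhood $U$ of $x$ is mapped homeomorphically by $f$, and every $y\in U$ inherits this property, so $U\subset M\setminus B_f$. Next, for each $x\in M$, discreteness yields a \emph{normal neighborhood} $U_x$, that is, a connected open neighborhood with $\overline{U_x}$ compact and $f^{-1}(f(x))\cap\overline{U_x}=\{x\}$; on such a neighborhood $f\colon U_x\to V_x:=f(U_x)$ is proper, and Hopf's local index $i(x,f)\in\Z$ is defined as the local degree at $f(x)$. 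Openness forces $i(x,f)\neq 0$, and one checks that $x\in B_f$ if and only if $|i(x,f)|\geq 2$.

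The core of the proof is to rule out that $B_f$ carries an $(n-1)$-dimensional piece. Suppose for contradiction that some $x_0\in B_f$ has $\dim_{x_0}B_f\geq n-1$; by the classical separation theorem for closed $(n-1)$-dimensional subsets of an $n$-manifold (a consequence of Alexander duality), arbitrarily small punctured normal neighborhoods $U_{x_0}\setminus B_f$ are disconnected. On each component $f$ is a local homeomorphism, and on each component of $V_{x_0}\setminus f(B_f)$ the local index is constant. By invariance of domain, each component of $U_{x_0}\setminus B_f$ is mapped into a component of $V_{x_0}\setminus f(B_f)$; summing the constant local indices over the preimage in $U_{x_0}$ of a point $y\in V_{x_0}\setminus f(B_f)$ must equal $i(x_0,f)$ and be independent of $y$. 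This degree bookkeeping turns out to be incompatible with the presence of a separating branch set of codimension one, and one concludes $\dim B_f\leq n-2$.

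For the image bound, the restriction $f|_{B_f}\colon B_f\to f(B_f)$ remains discrete, and it is closed because $f$ is proper on normal neighborhoods; standard results on dimension under closed finite-to-one maps then give $\dim f(B_f)\leq\dim B_f\leq n-2$. The main obstacle, both in this outline and historically in the original papers of \v Chernavski\u{\i} and V\"ais\"al\"a, is making the codimension-one separation argument fully rigorous: the local degree counting must be carried out through local integral cohomology of the pair $(U_{x_0},U_{x_0}\setminus B_f)$, and the delicate step is to show that the existence of a separating $(n-1)$-dimensional branch set genuinely contradicts the constancy of the local index on components of $V_{x_0}\setminus f(B_f)$. A cleaner modern route is to appeal to the fully developed Rickman--Martio--V\"ais\"al\"a local index theory, which packages precisely this bookkeeping into a usable form.
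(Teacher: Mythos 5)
The paper does not prove this statement at all: it is quoted as a known theorem of \v Chernavski\u{\i} and V\"ais\"al\"a, with references to their original papers, and is only used as a black box in the proof of Theorem \ref{t:quasiregular}. So the relevant question is whether your outline would stand on its own as a proof, and it does not. The central claim --- that an $(n-1)$-dimensional branch set would force a separation of a punctured normal neighborhood whose local-index bookkeeping is ``incompatible'' with constancy of the local degree --- is exactly the content of the theorem, and in your write-up it is asserted rather than proved (you say so yourself in the last paragraph). As it stands this is a plan for a proof, with the heart of the argument deferred to the Rickman--Martio--V\"ais\"al\"a machinery or to the original papers, which is fine for the way the paper uses the result but is not a proof.

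There is also a concrete error in the part you do argue: the bound $\dim f(B_f)\leq\dim B_f$ does not follow from ``standard results on dimension under closed finite-to-one maps.'' The Hurewicz dimension-raising theorem only gives $\dim Y\leq\dim X+k$ when the fibers of a closed surjection have at most $k+1$ points, and this is sharp: closed finite-to-one maps can genuinely raise dimension. This is precisely why the statement about $f(B_f)$ is a separate (and nontrivial) part of the \v Chernavski\u{\i}--V\"ais\"al\"a theorem rather than a corollary of the bound on $B_f$; V\"ais\"al\"a's proof of $\dim f(B_f)\leq n-2$ uses the specific structure of open discrete maps (local degree and normal neighborhoods), not general dimension theory of finite-to-one maps. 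Similarly, the characterisation $x\in B_f\Leftrightarrow|i(x,f)|\geq2$ is true but is itself a small theorem (a proper open discrete map of local degree $\pm1$ onto a normal neighborhood is a homeomorphism) and should be cited or proved rather than ``checked.''
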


The set $B_f$ is called the {\em branch set} of $f$. 
We are now ready to prove Theorem \ref{t:quasiregular}. We first state a more general result, not necessarily for closed manifolds; compare~\cite[Theorem 5.1 and Remark 5.2]{BHM} and~\cite[Section 7, Remark]{Vai}.

\begin{thm}\label{t:quasiregulargeneral}
Let $M$ be a finite CW-complex with fundamental group $\pi_1(M)=K\rtimes_\theta\Z$, where $K$ is as in Theorem \ref{t:maingroups}. If $C(\pi_1(M))=1$, then every proper, open and surjective map $f\colon M\to M$ induces an automorphism of $\pi_1(M)$.
\end{thm}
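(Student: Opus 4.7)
The plan is to realise this statement as a direct composition of two already-established ingredients: the Walsh--Smale lemma (Lemma \ref{l:WalshSmale}), which turns the topological hypotheses on $f$ into a finite-index statement about the image of $f_*$, and Theorem \ref{t:maingroups}, which promotes any endomorphism with finite-index image to an automorphism once the center is trivial. In short, the theorem should follow by composing these two inputs; there is essentially no new content to produce.

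Concretely, I would proceed in two steps. First, since $f\colon M\to M$ is a proper, open, surjective map between finite CW-complexes, applying Lemma \ref{l:WalshSmale} gives
\[
[\pi_1(M):f_*(\pi_1(M))]<\infty,
\]
so $f_*$ is an endomorphism of $\pi_1(M)=K\rtimes_\theta\Z$ whose image is a finite-index subgroup. Second, the hypotheses on $K$ together with the assumption $C(\pi_1(M))=1$ place us in the scope of Theorem \ref{t:maingroups} (equivalently, the equivalence (i)$\Leftrightarrow$(ii) in Theorem \ref{t:Hopf-type}), which asserts that $\pi_1(M)$ is cofinitely Hopfian. Applied to the endomorphism $f_*$ just produced, this is exactly the conclusion that $f_*$ is an automorphism.

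I do not expect a genuine obstacle, since both ingredients are already in hand; the work was done in assembling Theorem \ref{t:maingroups} and in quoting Walsh--Smale. The only minor point worth noting is that $f_*$ is defined only up to inner automorphism, since a CW-complex does not come with a canonical basepoint; but both the finite-index property of $f_*(\pi_1(M))$ and the cofinite Hopf conclusion are invariant under conjugation, so this ambiguity is immaterial.
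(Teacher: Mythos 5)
Your proposal is correct and matches the paper's proof: the paper likewise applies Lemma \ref{l:WalshSmale} to get $[\pi_1(M):f_*(\pi_1(M))]<\infty$ and then invokes the cofinite Hopf property from Theorem \ref{t:maingroups} (via triviality of the center, equivalently the infinite order of $\theta$ in $\mathrm{Out}(K)$) to conclude that $f_*$ is an automorphism. Your remark about the basepoint/inner-automorphism ambiguity is a harmless extra observation and does not affect the argument.
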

\begin{proof}
By Lemma \ref{l:WalshSmale}, $f_*(\pi_1(M))$ has finite index in $\pi_1(M)$. Since the order of $\theta$ in $\mathrm{Out}(K)$ is infinite, Theorem \ref{t:maingroups} tells us that $\pi_1(M)$ is cofinitely Hopfian, and thus $f_*$ is an isomorphism.
\end{proof}

\begin{cor}[Theorem \ref{t:quasiregular}]
Every quasiregular map of a closed manifold $M$ with fundamental group $\pi_1(M)=K\rtimes_\theta\Z$ as in Theorem \ref{t:maingroups}, where $C(\pi_1(M))=1$, is a homeomorphism.
\end{cor}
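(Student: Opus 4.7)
The plan is to deduce this corollary from Theorem \ref{t:quasiregulargeneral} and then upgrade the conclusion that $f_*$ is an automorphism to the conclusion that $f$ is a homeomorphism, using the standard branch-set structure theorem (Theorem \ref{t:branch}). So the proof should be short: verify the hypotheses of Theorem \ref{t:quasiregulargeneral}, apply it, and then run a degree argument.

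First I would check that any quasiregular self-map $f\colon M\to M$ of a closed manifold is proper, open, and surjective. Properness is automatic since $M$ is compact. Openness is built into the definition of quasiregular (by Reshetnyak's theorem). For surjectivity, note that $f(M)$ is compact, hence closed, and open by the openness of $f$; by connectedness of $M$ we get $f(M)=M$. With these verified, Theorem \ref{t:quasiregulargeneral} applies directly and yields that $f_*\colon\pi_1(M)\to\pi_1(M)$ is an isomorphism.

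Next I would invoke Theorem \ref{t:branch}: both the branch set $B_f$ and its image $f(B_f)$ have topological dimension at most $n-2$. The restriction
\[
f\colon M\setminus f^{-1}(f(B_f))\longrightarrow M\setminus f(B_f)
\]
is a proper local homeomorphism between connected open submanifolds, hence a finite covering of some degree $d\geq 1$. Because $f(B_f)$ has codimension at least $2$ in $M$, the inclusions $M\setminus f(B_f)\hookrightarrow M$ and $M\setminus f^{-1}(f(B_f))\hookrightarrow M$ are $\pi_1$-isomorphisms. Consequently $d$ equals the index $[\pi_1(M):f_*\pi_1(M)]$, which is $1$ since $f_*$ is an automorphism. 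Thus $f$ restricts to a homeomorphism off a codimension-$2$ subset, and in particular $|\deg f|=1$.

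Finally, I would conclude that $f$ is a homeomorphism. A proper, open, discrete, sense-preserving map of degree one between connected oriented manifolds of the same dimension is injective: the local topological index is one at every point, and every fiber outside $B_f$ is a singleton; by openness and discreteness this forces injectivity globally. This last injectivity step is the main obstacle in that it is the one place where one must genuinely use the quasiregular structure rather than just $\pi_1$-information; I expect to quote it from the standard framework of branched coverings (Väisälä~\cite{Vai}, and as used in~\cite{BHM}) rather than reprove it. Once injectivity is in hand, $f$ is a continuous bijection of a compact Hausdorff space, hence a homeomorphism.
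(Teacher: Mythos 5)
Your overall route is the same as the paper's: check that a quasiregular self-map of a closed connected manifold is proper, open and surjective, apply Theorem \ref{t:quasiregulargeneral} to get that $f_*$ is an automorphism, and then upgrade to a homeomorphism via the branch-set theorem (Theorem \ref{t:branch}). Those outer steps are fine and match the paper's proof.

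The gap is in your middle step. You assert that, since $f(B_f)$ and $f^{-1}(f(B_f))$ have topological dimension at most $n-2$, the inclusions of their complements into $M$ induce isomorphisms on $\pi_1$, and you use this to identify the number of sheets $d$ of the restricted covering with $[\pi_1(M):f_*\pi_1(M)]=1$. That claim is false: deleting a closed set of dimension $n-2$ gives a $\pi_1$-\emph{surjective} inclusion (this is all that the non-separation results cited via \cite{HW} provide), but not an injective one. A knot in $S^3$ is a standard counterexample, and, closer to the situation at hand, for $z\mapsto z^2$ on $S^2$ the branch image is two points, $f_*$ is an automorphism (of the trivial group) and yet $d=2$; so surjectivity alone only yields $d\geq[\pi_1(M):f_*\pi_1(M)]$ and cannot by itself exclude $|\deg f|>1$. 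This matters because $|\deg f|=1$ is exactly the input you need for your final injectivity quote from \cite{Vai,BHM}, and it is also what the paper uses to conclude that fibers over $M\setminus f(B_f)$ are singletons, hence $B_f=\emptyset$ and $f$ is a one-sheeted covering. The paper handles this point differently: it invokes $\deg(f)=\pm1$ as a consequence of $f_*$ being an automorphism, which is immediate when $M$ is aspherical (e.g.\ $M=E_h$) by Theorem \ref{t:aspherical} and Corollary \ref{c:degreeinjection}, rather than trying to read the degree off the restricted covering. To repair your write-up, replace the $\pi_1$-isomorphism claim by such a degree argument (or otherwise justify $\deg(f)=\pm1$); note that for a non-aspherical $M$ with the stated fundamental group this is the one step that genuinely needs an argument beyond the $\pi_1$-level information, since an automorphism on $\pi_1$ does not in general force degree $\pm1$.
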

\begin{proof}
Let $f\colon M\to M$ be a quasiregular map. Then $f$ is proper, open and discrete and thus finite-to-one ($M$ is closed), thus $f_*$ is an automorphism of $\pi_1(M)$ by Theorem \ref{t:quasiregulargeneral}. By~\cite{HW} (see also~\cite{BHM}), the branch set $B_f$ in Theorem \ref{t:branch} does not separate locally $M$ at any point, thus $f$ restricts to a covering on
\[
M\setminus f^{-1}(f(B_f))\longrightarrow M\setminus f(B_f),
\]
and the preimage of each $x\in M\setminus f(B_f)$ under $f$ contains one 
point because $\deg(f)=\pm1$. Thus $B_f=\emptyset$, the map $f$ is a covering, and hence a homeomorphism; see also~\cite[Section 5]{BHM} and~\cite{Vai}.
\end{proof}

\section{Extension to non-aspherical manifolds}\label{s:non-aspherical}

The purely algebraic nature of Theorem \ref{t:maingroups}, and of the various techniques applied in different stages of the proof, suggest that variants of the results of this paper generalise to non-aspherical manifolds. Such an extension was already given in Section \ref{ss:quasiregular} for open maps between finite CW-complexes. Below we give two more illustrative classes of groups. 

\subsection{Connected sums} 

For $s\geq2$, let the connected sum
\[
M=M_1\#\cdots\# M_s, 
\]
where $M_1,...,M_s$ are closed $n$-dimensional manifolds, such that their fundamental groups $K_i=\pi_1(M_i)$ are residually finite and $\chi(\pi_1(M))\neq0$. 

Suppose, for example, that $M_1$ is aspherical with $\chi(M_1)\neq0$, $\pi_1(M_1)$ satisfies condition (\ref{finiteness}), and that for $i=2,...,s$ each $M_i$ is simply connected. Then $\pi_1(M)$ satisfies all assumptions of Theorem \ref{t:maingroups} and thus Theorem \ref{t:maingroups} applies to  the fundamental group of the non-aspherical mapping torus $E_h$ of a homeomorphism $h\colon M\to M$. 

\subsection{Products}

Another example comes from direct products. Let, for instance,
\[
K=F_{r_1}\times\cdots\times F_{r_s}\times\pi_1(\Sigma_{g_1})\times\cdots\times\pi_1(\Sigma_{g_t}),
\]
where each $F_{r_i}$ is free on $r_i>1$ generators and each $\Sigma_{g_j}$ is a closed surface of genus $g_j\geq2$. Then $K$ is residually finite and $\chi(K)\neq0$. As explained already, the free factors $F_{r_i}$ do not satisfy the conditions of Theorem \ref{t:maingroups}, and so the same is true for $K$.

Let $\theta\colon K\to K$ be an automorphism. By~\cite{NeoAnosov}, since $F_{r_i}$ and $\pi_1(\Sigma_{g_j})$ are Hopfian and have trivial center, there exists an integer $m$ such that $$\theta^m=\theta^m|_{F_{r_1}}\times\cdots\times\theta^m|_{F_{r_s}}\times\theta^m|_{\pi_1(\Sigma_{g_1})}\times\cdots\times\theta^m|_{\pi_1(\Sigma_{g_t})},$$
where $\theta^m|_{F_{r_i}}$ and $\theta^m|_{\pi_1(\Sigma_{g_j})}$ denote self-automorphisms. Now we apply Theorem \ref{t:maingroups} to each $\theta^m|_{\pi_1(\Sigma_{g_j})}$ and Remark \ref{r:free-by-cyclic} (or~\cite[Theorem B]{BGHM}) to each $\theta^m|_{F_{r_i}}$ to deduce that Theorem \ref{t:maingroups} (except for the co-Hopf condition) holds for the mapping torus $\Gamma_\theta=K\rtimes_\theta\Z$.

\bibliographystyle{alpha}

\begin{thebibliography}{1000000}

\bibitem[Ag1]{Ago2}
I. Agol, {\em Criteria for virtual fibering}, J. Topol. {\bf 1} (2008), 269--284.

\bibitem[Ag2]{Ago}
I. Agol, {\em The virtual Haken conjecture}, With an appendix by I. Agol, D. Groves and J. Manning, Documenta Math. {\bf 18} (2013), 1045--1087.

\bibitem[Al]{Ali}
E. Alibegovi\'c, {\em Translation lengths in $\mathrm{Out}(F_n)$}, Geom. Dedicata {\bf 92} (2002), 87--93.

\bibitem[BL]{BL}
A. Bartels and W. L\"uck, {\em The Borel conjecture for hyperbolic and $\mathrm{CAT}(0)$-groups}, Ann. of Math. {\bf 175} (2012), 631--689.

\bibitem[Be]{Bel}
I. Belegradek, {\em On co-Hopfian nilpotent groups}, Bull. London Math. Soc. {\bf 35} (2003), 805--811.

\bibitem[BF]{BF}
M. Bestvina and M. Feighn, {\em Stable actions of groups on real trees}, Invent. Math. {\bf 121} (1995), 287--321.

\bibitem[BBM]{BBM}
G. Bharali, I. Biswas and M. Mj, {\em The Fujiki class and positive degree maps}, Complex Manifolds {\bf 2} (2015), 11--15.

\bibitem[Bi1]{Bie1}
R. Bieri, {\em Gruppen mit Poincar\'e Dualit\"at}, Comment. Math. Helv. {\bf 47} (1972), 373--396.

\bibitem[Bi2]{Bie}
R. Bieri, {\sl Homological dimension of discrete groups}, Second edition, Queen Mary College Mathematical Notes, Queen Mary College, Department of Pure Mathematics, London, 1981.

\bibitem[Bo]{Bow}
B. Bowditch, {\em Cut points and canonical splittings of hyperbolic groups}, Acta Math. {\bf 180} (1998), 145--186.

\bibitem[BGHM]{BGHM}
M. Bridson, D. Groves, J. Hilman and G. Martin, {\em Cofinitely Hopfian groups, open mappings and knot complements}, Groups Geom. Dyn. {\bf 4} (2010), 693--707. 

\bibitem[BHM]{BHM}
M. Bridson, A. Hinkkanen and G. Martin, {\em Quasiregular self-mappings of manifolds and word hyperbolic groups}, Compos. Math. {\bf 143} (2007), 1613--1622.

\bibitem[BG]{BG}
R. Brooks and W. Goldman, {\em Volumes in Seifert space}, Duke Math. J. {\bf 51} (1984), 529--545.

\bibitem[Br1]{Bro1}
R. Brown, {\em Euler characteristics of discrete discrete groups and $G$-spaces}, Invent. Math. {\bf 27} (1974), 229--264.

\bibitem[Br2]{Bro}
R. Brown, {\sl Cohomology of groups}, Graduate Texts in Mathematics {\bf 87}, Springer-Verlag, New York-Berlin, 1982.

\bibitem[CT]{CT}
J. A. Carlson and D. Toledo, {\em Harmonic mappings of K\"ahler manifolds to locally symmetric spaces}, Publ. Math. Inst. Hautes \'Etudes Sci.  {\bf 69} (1989), 173--201.

\bibitem[CJ]{CJ}
A. Casson and D. Jungreis, {\em Convergence groups and Seifert fibered 3-manifolds}, Invent. Math. {\bf 118} (1994), 441--456.

\bibitem[Ce]{Cer}
A. \v Cernavski\u\i, {\em Finite-to-one open mappings of manifolds} (Russian), Mat. Sb. (N.S.) {\bf 65} (1964), 357--369.

\bibitem[CL]{CL}
 D. Crowley and C. L\"oh, {\em Functorial seminorms on singular homology and (in)flexible manifolds}, Algebr. Geom. Topol. {\bf 15} (2015), 1453--1499.
 
 \bibitem[DLSW]{DLSW}
P. Derbez, Y. Liu, H. Sun and S. Wang, {\em Volume of representations and mapping degree}, Adv. Math. {\bf 351} (2019), 570--613.
 
 \bibitem[DW]{DW}
P. Derbez and S. Wang, {\em Graph manifolds have virtually positive Seifert volume}, J. Lond. Math. Soc. {\bf 86} (2012), 17--35.

\bibitem[Ec]{Eck}
B. Eckmann, {\em Poincar\'e duality groups of dimension two are surface groups}, Combinatorial Group Theory and Topology (eds. S.M. Gersten and J. Stallings), Annals of Math. Studies {\bf 111} (1987), 35--51.

\bibitem[FLM]{FLM}
B. Farb, A. Lubotzky and Y. Minsky, {\em Rank-1 phenomena for mapping class groups}, Duke Math. J. {\bf 106} (2001), 581--597.

\bibitem[FJ]{FJ}
T. Farrell and L. Jones, {\em Compact negatively curved manifolds (of $\dim\neq 3, 4$) are topologically rigid}, Proc. Natl. Acad. Sci., USA {\bf 86} (1989), 3461--3463.

\bibitem[Fe]{Fel}
G. L. Fel'dman, {\em The homological dimension of group algebras of solvable groups}, Izv. Akad. Nauk SSSR Ser. Mat. {\bf 35} (1971), 1225--1236.

\bibitem[FLS]{FLS}
R. Frigerio, J.-F. Lafont and A. Sisto, {\em Rigidity of high dimensional graph manifolds}, Ast\'erisque {\bf 372} (2015).

\bibitem[Ga]{Gab}
D. Gabai, {\em Convergence groups are Fuchsian groups,}  Ann. of Math. {\bf 136} (1992), 447--510.

\bibitem[Gab]{Gabo}
D. Gaboriau, {\em Invariants $\ell^2$ de relations d'\'equivalence et de groupes}, Publ. Math. Inst. Hautes \'Etudes Sci. {\bf 95} (2002), 93--150.

\bibitem[GS]{GS}
S. M. Gersten and H. B. Short, {\em Rational subgroups of automatic groups}, Ann. of Math. {\bf134} (1991), 125--158.

\bibitem[Gr]{Gri}
H. B. Griffiths, {\em The fundamental group of a surface, and a theorem of Schreier}, Acta Math. {\bf 110} (1963), 1--17.

\bibitem[Gr1]{Gro}
M. Gromov, {\em Volume and bounded cohomology}, Publ. Math. Inst. Hautes \'Etudes Sci.  {\bf 56} (1982), 5--99.

\bibitem[Gr2]{Gro1}
M. Gromov, {\em Hyperbolic groups}, Essays in Group Theory, Math. Sci. Res. Inst. Publ., Springer, New York-Berlin {\bf 8} (1987), 75--263.

\bibitem[Gr3]{Gro2}
M. Gromov, {\sl Metric Structures for Riemannian and Non-Riemannian Spaces}, With appendices by M. Katz, P. Pansu and S. Semmes, translated from the French by S. M. Bates, Progress in Mathematics {\bf152}, Birkh\"auser Verlag, 1999.

\bibitem[Go]{Got}
D. Gottlieb, {\em A certain subgroup of the fundamental group}, Amer. J. Math. {\bf 87} (1965), 840--856.

\bibitem[Ha]{Hau}
J.-C. Hausmann, {\em Geometric Hopfian and non-Hopfian situations}, Lecture notes in Pure and Applied Math. {\bf 105} (1987), 157--165.

\bibitem[He]{Hem1}
 J. Hempel, {\em Residual finiteness for 3-manifolds}, Combinatorial group theory and topology (Alta, Utah, 1984), 379--396, Ann. of Math. Stud. {\bf 111}, Princeton Univ. Press, Princeton, NJ, 1987.
 
\bibitem[Hi]{Hir}
R. Hirshon, {\em Some properties of endomorphisms in residually finite groups}, J. Austral. Math. Soc. {\bf 24} (1977), 117--120.

\bibitem[HS]{HS}
G. Hoschschild and J.-P. Serre, {\em Cohomology of group extensions}, Trans. Amer. Math. Soc. {\bf 74} (1953), 110--134.

\bibitem[HW]{HW}
W. Hurewicz and H. Wallman, {\sl Dimension Theory}, Princeton Mathematical Series {\bf 4}, Princeton University Press, Princeton, NJ, 1941.

\bibitem[JW]{JW}
F. E. A. Johnson and C. T. C. Wall, {\em On groups satisfying Poincar\'e duality}, Ann. of Math. {\bf 96} (1972), 592--598.

\bibitem[Ki]{Kir}
R. Kirby, {\sl Problems in low-dimensional topology}, Berkeley, 1995.

\bibitem[Le]{Lel}
J. Lelong-Ferrand, {\em Invariants conformes globaux sur les vari\'et\'es riemanniennes} (French), J. Differential Geometry {\bf 8} (1973), 487--510.

\bibitem[Li]{Lic}
A. Lichnerowicz, {\em Sur les transformations conformes d’une vari\'et\'e riemannienne compacte} (French), C. R. Math. Acad. Sci. Paris {\bf 259} (1964), 697--700.

\bibitem[Lu1]{Lue1}
W. L\"uck, {\em Approximating $L^2$-invariants by their finite-dimensional analogues}, Geom. Funct. Anal. {\bf 4} (1994), 455--481.

\bibitem[Lu2]{Lue4}
W. L\"uck, {\em $L^2$-Betti numbers of mapping tori and groups}, Topology {\bf 33} (1994), 203--214.

\bibitem[Lu3]{Luebook}
W. L\"uck, {\sl $L^2$-invariants: theory and applications to geometry and K-theory}, Springer-Verlag, Berlin, 2001.

\bibitem[Lu4]{Lue3}
W. L\"uck, {\em Survey on aspherical manifolds}, European Congress of Mathematics, 53--82, Eur. Math. Soc., Z\"urich, 2010. 

\bibitem[Ma]{Mal}
A. I. Mal'cev, {\em On the faithful representation of infinite groups by matrices}, Math. Stab. {\bf 8} (1940), 405--422.

\bibitem[MMP]{MMP}
G. Martin, V. Mayer and K. Peltonen, {\em The generalized Lichnerowicz problem: uniformly quasiregular mappings and space forms}, Proc. Amer. Math. Soc. {\bf 134} (2006), 2091--2097.

\bibitem[Mi]{Mil}
C. F. Miller, III, {\sl On group-theoretic decision problems and their classification}, Ann. of Math. Studies {\bf 68} Princeton University Press, Princeton, NJ, University of Tokyo Press, Tokyo, 1971.

\bibitem[MT]{MT}
J. Milnor and W. Thurston, {\em Characteristic numbers of 3-manifolds}, Enseignement Math. {\bf 23} (1977), 249--254.

\bibitem[Mi1]{Min}
I. Mineyev, {\em Straightening and bounded cohomology of hyperbolic groups}, Geom. Funct. Anal. {\bf 11} (2001), 807--839.

\bibitem[Mi2]{Min1}
I. Mineyev, {\em Bounded cohomology characterizes hyperbolic groups}, Q. J. Math. {\bf 53}  (2002), 5--73.

\bibitem[Mo]{Mos}
G. D. Mostow, {\em Quasi-conformal mappings in n-space and the rigidity of hyperbolic space forms}, Publ. Math. Inst. Hautes \'Etudes Sci. {\bf 34} (1968), 53--104.

\bibitem[Ne1]{NeoAnosov}
C. Neofytidis, {\em Anosov diffeomorphisms of products II. Aspherical manifolds}, J. Pure Appl. Algebra {\bf 224} (2020), 1102--1114. 

\bibitem[Ne2]{NeoHopf}
C. Neofytidis, {\em On a problem of Hopf for circle bundles over aspherical manifolds with hyperbolic fundamental groups},  Algebr. Geom. Topol. {\bf 23} (2023), 3205--3220.

\bibitem[Ne3]{Neoorder}
C. Neofytidis, {\em Ordering Thurston's geometries by maps of non-zero degree}, J. Topol. Anal. {\bf 10} (2018), 853--872.

\bibitem[Neu]{Neum}
B. H. Neumann, {\em On a problem of Hopf}, J. Lond. Math. Soc. {\bf 28} (1953), 351--353.

\bibitem[Neum]{Neu}
W. Neumann, {\em Commensurability and virtual fibration for graph manifolds}, Topology {\bf 39} (1996), 355--378.

\bibitem[Ni]{Nie}
J. Nielsen, {\em Surface transformation classes of algebraically finite type}, Danske Vid. Selsk. Mat.-Fys. Medd. {\bf 21} (1944), 89 pp.

\bibitem[Pa]{Pau}
F. Paulin, {\em Outer automorphisms of hyperbolic groups and small actions on R-trees}, Arboreal group theory  (Berkeley, CA, 1988), 331--343, 
Math. Sci. Res. Inst. Publ. {\bf 19}, Springer, New York, 1991.

\bibitem[Ro]{Ron}
Y. W. Rong, {\em Degree one maps between geometric 3-manifolds}, Trans. Amer. Math. Soc. {\bf 332} (1992), 411--436.

\bibitem[Sc1]{Sco}
 G. P. Scott, {\em Finitely generated 3-manifold groups are finitely presented}, J. London Math. Soc. {\bf 6} (1973), 437--440.

\bibitem[Se1]{Sel1}
Z. Sela, {\em Structure and rigidity in (Gromov) hyperbolic groups and discrete groups in rank 1 Lie groups. II.}, Geom. Funct. Anal. {\bf 7} (1997), 561--593.

\bibitem[Se2]{Sel2}
Z.~Sela, {\em Endomorphisms of hyperbolic groups. I: The Hopf property}, Topology {\bf 38} (1999), 301--321.

\bibitem[Si]{Sin}
I. M. Singer, {\em Some remarks on operator theory and index theory}, K-theory and operator algebras (Proc. Conf., Univ. Georgia, Athens, Ga., 1975), pp. 128--138, Lecture Notes in Math. {\bf 575}, Springer-Verlag, Berlin, 1977.

\bibitem[Sm]{Sma}
S. Smale, {\em A note on open maps}, Proc. Amer. Math. Soc. {\bf 8} (1957), 391--393.

\bibitem[St]{Sta}
J. R. Stallings, {\em Coherence of 3-manifold fundamental groups}, S\'eminaire Bourbaki, Vol. 1975/76, 28ème année, Exp. {\bf 481}, 167--173.

\bibitem[Str]{Str}
R. Strebel, {\em A remark on subgroups of infinite index in Poincar\'e duality groups}, Comment. Math. Helv. {\bf 52} (1977), 310--314.

\bibitem[Th1]{Thubook}
W. P. Thurston, {\sl Three-Dimensional Geometry and Topology}, Princeton University Press, 1997.

\bibitem[Th2]{Thu}
W. P. Thurston, {\em On the geometry and dynamics of diffeomorphisms of surfaces}, Bull. Amer. Math. Soc. (N.S.) {\bf 19} (1988), 417--431.

\bibitem[Th3]{Thu2}
W. P. Thurston, {\em A norm for the homology of 3-manifolds}, Mem. Amer. Math. Soc. {\bf 59} (1986), i--vi and 99--130.

\bibitem[Tu]{Tuk}
P. Tukia, {\em On quasiconformal groups}, J. Anal. Math. {\bf 46} (1986), 318--346.

\bibitem[Va]{Vai}
J. V\"ais\"al\"a, {\em Discrete open mappings on manifolds}, Ann. Acad. Sci. Fenn., Ser. A I {\bf 392} (1966), 10 pp.

\bibitem[Wa]{Wald}
F. Waldhausen, {\em On irreducible 3-manifolds which are sufficiently large}, Ann. of Math. {\bf 87} (1968), 56--88.

\bibitem[Wal]{Wal1}
C. T. C. Wall, {\em Rational Euler characteristics}, Proc. Cambridge Philos. Soc. {\bf 57} (1961), 182--184. 

\bibitem[Wals]{Wal}
J. Walsh, {\em Light open and open mappings on manifolds. II}, Trans. Amer. Math. Soc.{ \bf 217} (1976), 271--284.

\bibitem[Wa1]{Wan1}
S. Wang, {\em The existence of maps of non-zero degree between aspherical 3-manifolds}, Math. Z. {\bf 208} (1991), 147--160.

\bibitem[Wa2]{Wan}
S. Wang, {\em The $\pi_1$-injectivity of self-maps of non-zero degree on 3-manifolds}, Math. Ann. {\bf 297} (1993), 171--189.

\end{thebibliography}

\end{document}